\numberwithin{equation}{section}
\newcommand{\bbN}{{\mathbb N}}
\newcommand{\bbQ}{{\mathbb Q}}
\newcommand{\bbE}{{\mathbb E}}
\newcommand{\bbR}{{\mathbb R}}
\newcommand{\bbZ}{{\mathbb Z}}
\newcommand{\mfg}{\mathfrak{g}}
\newcommand{\pr}{\operatorname{pr}}
\newcommand{\Ball}{\operatorname{B}}
\newcommand{\half}{\frac{1}{2}}
\newcommand{\SL}{\operatorname{SL}}
\newcommand{\overto}[1]{{\buildrel{#1}\over\longrightarrow}}
\newcommand{\acts}{\curvearrowright}
\newcommand{\len}{\operatorname{length}}
\newcommand{\ab}[1]{{#1}^{\operatorname{ab}}}
\newcommand{\scl}[2]{\operatorname{scl}_{#1}\left({#2}\right)}
\newcommand{\ssc}[2]{\frac{1}{#1}\bullet{#2}}
\theoremstyle{plain}
\newtheorem{mthm}{Theorem}
\newtheorem{mcor}[mthm]{Corollary}
\newtheorem{theorem}{Theorem}[section]
\newtheorem{lemma}[theorem]{Lemma}
\newtheorem{prop}[theorem]{Proposition}
\theoremstyle{definition}
\newtheorem{construction}[theorem]{Construction}
\newtheorem{example}[theorem]{Example}
\newtheorem{remark}[theorem]{Remark}
\numberwithin{equation}{section}
\begin{document}

\title[EFM on Nilpotent groups]{Asymptotic shapes for ergodic families of metrics\\ on Nilpotent groups}

\date{\today}

\author{Michael Cantrell}
\address{University of Illinois at Chicago, Chicago}
\email{mcantr2@uic.edu}

\author{Alex Furman}
\address{University of Illinois at Chicago, Chicago}
\email{furman@math.uic.edu}

\begin{abstract}
	Let $\Gamma$ be a finitely generated virtually nilpotent group.
	We consider three closely related problems: 
	(i) convergence to a deterministic asymptotic cone for an equivariant ergodic family of
	inner metrics on $\Gamma$, generalizing Pansu's theorem;
	(ii) the asymptotic shape theorem for First Passage Percolation for general (not necessarily independent)
	ergodic processes on edges of a Cayley graph of $\Gamma$;
	(iii) the sub-additive ergodic theorem over a general ergodic $\Gamma$-action.
	The limiting objects are given in terms of a Carnot-Carath\'eodory metric on 
	the graded nilpotent group associated to the Mal'cev completion of $\Gamma$.
\end{abstract}

\maketitle

\section{Introduction and statement of the main results} 
\label{sec:introduction_and_main_results}

Let $\Gamma$ be a finitely generated virtually nilpotent group.
The topic of this paper may be viewed from three slightly different perspectives:
\begin{itemize}
	\item[(i)] 
	As a generalization of the result of Pansu \cite{Pansu} showing that the asymptotic cone of an invariant inner metric $d$ on $\Gamma$
	is the Carnot group $G_\infty$  (the graded nilpotent Lie group associated with the Mal'cev completion $G$ of $\Gamma$) 
	equipped with a certain Carnot-Carath\'eodory metric $d_\infty$.
	Here we show that if one replaces a \emph{single invariant} metric $d$ by an \emph{equivariant ergodic family} $\{d_x \mid x\in X\}$ 
	of inner metrics on $\Gamma$, 
	then a.e. $(\Gamma,d_x,e)$ has the same asymptotic cone which is the Carnot group $G_\infty$ 
	equipped with a fixed Carnot-Carath\'eodory
	metric associated to certain averages of the family $\{d_x \mid x\in X\}$.
	\item[(ii)] 
	As a result about asymptotic shape for First Passage Percolation 
	model over $\Gamma$ driven by a general ergodic process $\Gamma\acts (X,m)$.
	(The case of independent times was recently studied by Benjamini and Tessera \cite{BT}).
	\item[(iii)]
	As a Subadditive Ergodic Theorem over a general ergodic probability measure preserving (hereafter p.m.p.) action $\Gamma\acts (X,m)$.
	Given a measurable function $c:\Gamma\times X\to \bbR$, satisfying 
	\[
		c(\gamma_1\gamma_2,x)\le c(\gamma_1,\gamma_2.x)+c(\gamma_2,x)\qquad (\gamma_1,\gamma_2\in\Gamma),
	\]
	and some additional conditions, we show that for a.e. $x\in X$ there is a unique limit to $c(\gamma,x)$ suitably normalized; 
	the limit is described on the Carnot group $G_\infty$ using a Carnot-Carath\'eodory construction.
\end{itemize}
Let us recall some facts about nilpotent groups.
Upon passing to a finite index subgroup and dividing by a finite normal subgroup, we assume hereafter that our group $\Gamma$
is a torsion-free nilpotent group with torsion-free abelianization $\ab{\Gamma}\cong\bbZ^d$; 
this adjustment does not change the problem - see \S \ref{sub:cocycle2cc} below. 
By the classical work of Mal'cev, a finitely generated, torsion-free, nilpotent group $\Gamma$
can be embedded as a discrete subgroup of a connected, simply connected, 
nilpotent real Lie group $G$ so that $G/\Gamma$ is compact.
Moreover, such an embedding $\Gamma<G$ is unique up to automorphisms of $G$.
This $G$ is often called the \textbf{Mal'cev completion} of $\Gamma$.
Associated with $G$ one has the \textbf{graded nilpotent} connected, simply connected, real Lie group $G_\infty$, 
that is constructed from the quotient spaces $\mfg^{i}/\mfg^{i+1}$
of the descending central series $\mfg=\mfg^{1}>\mfg^{2}>\dots>\mfg^{r+1}=\{0\}$ of the Lie algebra of $G$ (see below).
In particular, one can identify the abelianizations $\ab{G}:=G/[G,G]$ and $\ab{G}_\infty=G_\infty/[G_\infty,G_\infty]$ 
via $\mfg/\mfg^{2}\cong \mfg_\infty/\mfg_\infty^{2}$.
The graded Lie group $G_\infty$ admits a one parameter family $\{\delta_t \mid t>0\}$ of automorphisms that 
induce the linear homotheties $\times t$ on the real vector space $\ab{G}_\infty\cong \ab{\mfg}\cong\ab{\mfg}_\infty$.
Such a group $G_\infty$ (with the family of homotheties) is sometimes called a \textbf{Carnot group}.

\begin{example}
	The integral Heisenberg group $\mathbf{H}_\bbZ$ embeds in the $3$-dimensional real Heisenberg group 
	\[
		\mathbf{H}_\bbR=\left\{ M_{x,y,z}=
		\left(\begin{array}{ccc} 1 & x & z \\ 0 & 1 & y\\ 0 & 0 & 1\end{array}\right)\ \mid x,y,z\in\bbR\right\}
	\]
	by restricting $x,y,z$ to be integers. In this case $G=\mathbf{H}_\bbR$ is itself graded: $G=G_\infty$.
	The abelianization $\ab{G}$ is two dimensional, and $G\to \ab{G}$ is given by $M_{x,y,z}\mapsto (x,y)$.
	The similarities are given by 
	\[
		\delta_t(M_{x,y,z})=M_{tx,ty,t^2z}.
	\]
\end{example}

Let $d$ be a an inner right-invariant\footnote{One often considers left-invariant metrics; our choice of right-invariance is dictated by our notation for sub-additive cocycles.} metric $d$ on $\Gamma$, e.g. $d(\gamma_1,\gamma_2)=|\gamma_1\gamma_2^{-1}|_S$, 
where $|\gamma|_S$ is the length of a shortest word representing $\gamma$ using elements of a fixed generating set $S$ for $\Gamma$.
In \cite{Pansu} Pansu proved that associated with such $d$ there is a right-invariant proper metric $d_\infty$ on $G_\infty$, 
that is homogeneous in the sense that
\[
	d_\infty(\delta_t(g),\delta_t(g'))=t\cdot d_\infty(g,g')\qquad (g,g'\in G_\infty,\ t>0)
\]
and such that there is Gromov-Hausdorff convergence
\begin{equation}\label{e:GH}
 	(\Gamma,\frac{1}{t}\cdot d,e)\ \to\ (G_\infty,d_\infty,e).
\end{equation}
The metric $d_\infty$ is a result of Carnot-Carath\'eodry construction applied to a certain norm on $\ab{G}\cong \ab{G}_\infty$,
associated to $d$.

\medskip

To state our results we need to fix some further notations.
Let $\Gamma$ be a finitely generated, torsion-free, nilpotent group, denote by $G$ its Mal'cev completion, 
and by $G_\infty$ the associated Carnot group with homotheties $\{ \delta_t \mid t>0\}$.
Fix a right-invariant inner metric $d$ on $\Gamma$, e.g. a word metric as above, 
and let $d_\infty$ on $G_\infty$ be the associated Carnot-Carath\'eodory metric as in Pansu's theorem.

Given a function $f:\Gamma\to \bbR$ one can consider an asymptotic cone of its graph in $\Gamma\times \bbR$,
i.e. possible Gromov-Hausdorff limits of 
\[
	{\rm Graph}(f)=\{ (\gamma,f(\gamma)) \mid \gamma\in\Gamma\}\subset \Gamma\times \bbR
\] 
with $(e,0)$ being the marked point.
The functions $f$ that will appear below, will be special in several ways:
\begin{itemize}
	\item[(f1)] 
		the rescaled graphs ${\rm Graph}(f)$ actually have a unique Gromov-Hausdorff limit,
	\item[(f2)] 
		this limit is given by a graph ${\rm Graph}(\Phi)$
		of a function $\Phi:G_\infty\to \bbR$,
	\item[(f3)] 
		the function $\Phi:G_\infty\to \bbR$ appears in a Carnot-Carath\'eodory construction; in particular, it is homogeneous:
		$\Phi(\delta_t(g))=t\cdot \Phi(g)$ for $g\in G_\infty$ and $t>0$.
\end{itemize}
The convergence in (f2) implies that
\[
	t_i^{-1}\cdot f(\gamma_i)\to \Phi(g)\qquad \textrm{whenever}\qquad \scl{t_i}{\gamma_i}\to g \in G_\infty,
\]
where the latter relates to the Gromov-Hausdorff limit (\ref{e:GH}) with $t_i\to\infty$.
Let us say that two functions $f,f':\Gamma\to \bbR$ are \textbf{asymptotically equivalent} if 
\[
	f(\gamma)-f'(\gamma)=o(|\gamma|_S).
\]
Then $f$ satisfies (f1)-(f3) with $\Phi$ iff $f'$ does. 
One might say that $\Phi$ is \textbf{the unique homogeneous representative of the 
asymptotic equivalence class} of $f$ (here the uniqueness statement follows from the fact
that different homogeneous functions cannot be asymptotically equivalent).
 
\begin{mthm}\label{T:NilKingman}\hfill{}\\
	Let $\Gamma$ be a finitely generated virtually nilpotent group, $\Gamma\acts (X,m)$ an ergodic probability measure-preserving action, and
	$c:\Gamma \times X\to \bbR_+$ a measurable subadditive cocycle.
	Assume that 
	\begin{itemize}
		\item[{\rm (i)}] 
		For some $0<k\le K<+\infty$ one has $k\cdot |\gamma|_S\le c(\gamma,x)\le K\cdot |\gamma|_S$ for a.e. $x\in X$.
		\item[{\rm (ii)}] 
		For a.e. $x\in X$ for every $\epsilon>0$ there is a finite set $F\subset \Gamma$ so that for every $x'\in\Gamma.x$ 
		any $\gamma\in \Gamma$ can be written as $\gamma=\delta_n\cdots \delta_2\delta_1$ with $\delta_i\in F$ and 
		\[
			c(\delta_1,x')+c(\delta_2,\delta_1.x')+\dots+c(\delta_n,\delta_{n-1}\cdots\delta_1.x')\le (1+\epsilon)\cdot c(\gamma,x').
		\]
	\end{itemize}
	Then for a full measure set of $x\in X$ the functions $c(-,x):\Gamma\to \bbR$ are asymptotically equivalent to each other
	and are represented by a unique homogeneous function $\Phi:G_\infty\to\bbR$, that is obtained in the following construction.
\end{mthm}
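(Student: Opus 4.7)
The strategy is to produce $\Phi$ as $\Phi(g)=d_\infty(e,g)$, where $d_\infty$ is a Carnot-Carath\'eodory metric on $G_\infty$ built from a norm $N$ on $\ab{G}_\infty\cong\bbR^d$; the norm $N$ itself is obtained as a one-dimensional subadditive limit of $c$ along cyclic subgroups. The asymptotic equivalence of $c(-,x)$ with $\Phi$ will then follow from an upper bound driven by hypothesis~(ii) and a matching lower bound obtained via Pansu's Gromov-Hausdorff convergence (\ref{e:GH}).

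For the first step, fix $\gamma_0\in\Gamma$ and apply the classical one-dimensional Kingman subadditive ergodic theorem to the $\bbZ$-action generated by $\gamma_0$ and the subadditive sequence $n\mapsto c(\gamma_0^n,x)$; by hypothesis~(i) we get an a.s.\ limit $c(\gamma_0^n,x)/n\to N_0(\gamma_0)$ whose value, after averaging over the ergodic components of $\langle\gamma_0\rangle\acts(X,m)$, is deterministic because $\Gamma$ acts ergodically on $(X,m)$. Next observe that $N_0$ factors through $\ab{\Gamma}\cong\bbZ^d$: if $\gamma_0,\gamma_0'$ have the same abelianization, then $\gamma_0^n(\gamma_0')^{-n}\in[\Gamma,\Gamma]$, and the standard Pansu length estimate for commutators in a step-$r$ nilpotent group gives $|\gamma_0^n(\gamma_0')^{-n}|_S=O(n^{1-1/r})=o(n)$, which combined with the upper bound in~(i) makes the two limits coincide. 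The resulting $N:\bbZ^d\to\bbR_+$ is positively homogeneous by construction and subadditive by subadditivity of $c$, so it extends uniquely to a semi-norm on $\bbR^d$; non-degeneracy is provided by the lower bound in~(i) together with Pansu's original theorem applied to any word metric. Using $N$ as a Finsler norm on the horizontal layer $\mfg_\infty/[\mfg_\infty,\mfg_\infty]$, define $d_\infty(e,g)$ as the infimum of $\int_0^1 N(\dot\xi^{\mathrm{ab}}(s))\,ds$ over absolutely continuous horizontal paths $\xi:[0,1]\to G_\infty$ from $e$ to $g$. Chow-Rashevskii and homogeneity of $N$ under the dilations $\delta_t$ make $\Phi(g):=d_\infty(e,g)$ finite, continuous, and homogeneous of degree one, so it is a bona fide homogeneous candidate for the limit.

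For the upper bound, fix $g\in G_\infty$ and a sequence $(\gamma_n,t_n)$ whose Pansu rescaling converges to $g$. Given $\epsilon>0$, approximate the sub-Finsler geodesic from $e$ to $g$ by a concatenation of finitely many rational horizontal segments in directions $v_1,\ldots,v_N\in\bbZ^d\subset\ab{\Gamma}$, and lift each $v_i$ to an element $\tilde v_i\in\Gamma$. Applying hypothesis~(ii) with a finite set $F$ containing the $\tilde v_i$ and enough commutator correctors so that the approximating concatenation is actually realizable, one writes $\gamma_n=\delta_{n,k}\cdots\delta_{n,1}$ with $c$-sum at most $(1+\epsilon)c(\gamma_n,x)=O((1+\epsilon)t_n)$. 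Grouping consecutive occurrences of each $\tilde v_i$ and applying the a.s.\ limits $c(\tilde v_i^{k},y)/k\to N(v_i)$ along the appropriate shifted basepoints (which hold on a full-measure set by ergodicity and the previous step) yields $\limsup_n c(\gamma_n,x)/t_n\le\Phi(g)+O(\epsilon)$, and letting $\epsilon\to 0$ gives the desired upper bound.

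The matching lower bound will be the main obstacle. The plan is to take a nearly $c$-minimizing path from $e$ to $\gamma_n$, rescale by $t_n$, and apply Arzel\`a-Ascoli together with Pansu's convergence (\ref{e:GH}) to extract a limit path $\xi$ in $(G_\infty,d_\infty)$ from $e$ to $g$; the equicontinuity needed for Arzel\`a-Ascoli comes from~(i), and the limit $\xi$ is automatically horizontal since motion in commutator directions is visible only at sub-linear Pansu scale. On dyadic subintervals one then bounds $c$ below by $N(\dot\xi^{\mathrm{ab}})$ using the one-dimensional limit from the second step together with subadditivity; integrating gives $\liminf c(\gamma_n,x)/t_n\ge\int_0^1 N(\dot\xi^{\mathrm{ab}})\,ds\ge d_\infty(e,g)=\Phi(g)$. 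The technical heart is that this lower bound must hold uniformly along the whole sequence rather than only along a subsequence, which requires the approximation of a $c$-minimizing path by short abelian segments to be uniform in $n$; this is precisely where hypothesis~(ii), the quantitative Pansu estimate for commutator growth, and the ergodic averaging of $c$ along a.e.\ orbit must all be made to cooperate.
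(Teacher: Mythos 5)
Your skeleton is close to the paper's (Kingman limits to build $\phi$, the Carnot-Carath\'eodory construction for $\Phi$, polygonal approximation of CC-geodesics, Arzel\`a-Ascoli for the lower bound), but there is one essential missing idea: the ``moving target'' problem in the ergodic averages along polygonal paths, which you elide with the parenthetical ``(which hold on a full-measure set by ergodicity and the previous step)''. Kingman applied to $T_j$ gives $c(T_j^m,y)/m\to\phi(\ab{T}_j)$ for $y$ in a full-measure set $Y_j$; but you must evaluate $c(T_j^n,\,T_{j-1}^n\cdots T_1^n.x)$ at the \emph{specific} points $T_{j-1}^n\cdots T_1^n.x$, a null sequence of basepoints, and there is no a priori reason these land in $Y_j$ for large $n$. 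This is precisely the technical obstacle the paper isolates and handles in Theorem~\ref{T:erg-poly} (the ergodic theorem along polygonal paths), by combining a Birkhoff density estimate (Lemma~\ref{L:moving-target}: for a.e.\ $x$ a positive proportion of the points $T^{n-n'}.x$ with $n'<\delta n$ lie in the good set), a perturbation estimate (Lemma~\ref{L:pert}: replacing each $T_j^n$ by $T_j^{n-n_j}$ with $n_j<\delta n$ moves the partial products by at most $\epsilon n$ in the word metric, since the iterated-commutator corrections have length $O(\delta^{1/(s+1)}n)$), and a Lipschitz/parallelogram bound on $c$ (Lemma~\ref{L:4points}). Without some version of this machinery neither your upper bound nor your lower bound goes through, since both rely on evaluating $c$ along $T_j^n\cdots T_1^n$ at the moving base point.

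Two secondary issues. In your upper bound you invoke hypothesis~(ii) in the wrong direction: (ii) produces \emph{some} decomposition with $c$-sum $\le(1+\epsilon)\cdot c(\gamma,x)$, which you cannot force to track a chosen geodesic, and even if you could, that inequality would yield a \emph{lower} bound on $c(\gamma_n,x)$, not an upper one. The upper bound only needs subadditivity, $c(\gamma_n,x)\le\sum_j c(T_j^{n_i},\cdot)+(\text{Lipschitz error})$, together with the polygonal ergodic theorem; indeed the paper's Remark~(3) stresses that innerness plays no role in the $\limsup$ inequality. Finally, you concede the lower bound is only a sketch; the route you outline (Arzel\`a-Ascoli on rescaled piecewise-geodesic curves coming from a near-optimal $c$-decomposition provided by (ii), then Proposition~\ref{P:curve-to-polygon}) is the paper's, but again it cannot close without Theorem~\ref{T:erg-poly}. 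Also, your worry about the lower bound holding ``uniformly along the whole sequence rather than only along a subsequence'' is unfounded: the paper argues by contradiction, so a subsequential contradiction suffices, and uniformity over $g\in\Ball(e,1)$ is deduced afterwards from precompactness.
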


\begin{construction}\label{cons:CC}
	Given a subadditive cocycle $c:\Gamma\times X\to \bbR_+$ over an ergodic action $\Gamma\acts (X,m)$ of a finitely generated virtually nilpotent group $\Gamma$.
	\begin{itemize}
		\item
		Up to finite index and finite kernel (once $\Gamma\acts (X,m)$ and $c:\Gamma\times X\to \bbR_+$ are adjusted accordingly) 
		we are reduced to the case that $\Gamma$ is a finitely generated nilpotent group that is torsion free and has torsion-free abelianization $\ab{\Gamma}$.
		\item 
		Define a subadditive function $\overline{c}:\Gamma\to \bbR_+$ by integration: 
		\[
			\overline{c}(\gamma):=\int_X c(\gamma,x)\,dm(x).
		\]
		\item 
		Define a subadditive function $f:\ab{\Gamma}\overto{}  \bbR_+$ by minimizing $F$ over fibers:
		\[
			f(\ab{\gamma}):=\inf\left\{ \overline{c}(\gamma_1) \mid \ab{\gamma}=\ab{\gamma}_1\right\}.
		\]
		\item 
		Define $\phi:\ab{\mfg}_\infty\to\bbR_+$ by viewing $\ab{\Gamma}$ as a lattice in the vector space 
		$\ab{\Gamma}\otimes\bbR$ 
		and observing that there is a unique homogeneous subadditive function (a possibly asymmetric norm)
		\[
			\phi:\ab{\Gamma}\otimes \bbR\overto{}\bbR_+
		\] 
		representing $f:\ab{\Gamma}\overto{} \bbR_+$.
		\item
		Define $\Phi:G_\infty\to\bbR_+$ to be the homogeneous function associated to $\phi$ viewed as 
		an asymmetric norm on $\ab{\Gamma}\otimes\bbR\cong\ab{G}\cong\ab{\mfg}\cong\ab{\mfg}_\infty$ and applying the Carnot-Carath\'eodory construction.
	\end{itemize}
	For more details see \S\S \ref{sub:CC-construction}--\ref{sub:cocycle2cc}.
\end{construction} 

Recall that a metric $d$ on a metric space $M$ is called \textbf{inner}  
if given $\epsilon>0$ there is $R<\infty$ so that for any $p,q\in M$ one can find $n\in\bbN$ and $p_0,\dots,p_n$ so that: 
$p_0=p$, $p_n=q$, $d(p_{i-1},p_i)<R$ for $1\le i\le n$, and
\[
	d(p_0,p_1)+d(p_1,p_2)+\dots+d(p_{n-1},p_n)\le (1+\epsilon)\cdot d(p,q).
\]
The following result can be viewed as a generalization of Pansu's result on a single right-invariant inner metric on $\Gamma$ 
to equivariant ergodic families of inner metrics.

\begin{mthm}\label{T:REM}\hfill{}\\
	Let $\Gamma$ be a finitely generated virtually nilpotent group, $\Gamma\acts (X,m)$ an ergodic p.m.p. action, and let $\{ d_x \mid x\in X\}$ be a measurable family of
	inner metrics on $\Gamma$ that is right-equivariant:
	\begin{equation}\label{e:equi-metrics}
		d_{x}(\gamma_1,\gamma_2)=d_{\gamma.x}(\gamma_1\gamma^{-1},\gamma_2\gamma^{-1})\qquad\qquad (\gamma,\gamma_1,\gamma_2\in\Gamma),
	\end{equation}
	and satisfies a uniform bi-Lipschitz estimate $0<a\le d_x/d\le b<\infty$ where $d$ is some right-invariant word metric on $\Gamma$.

	Then there exists a right-invariant homogeneous metric $d_\phi$ on $G_\infty$ so that for a.e. $x\in X$ there is
	Gromov-Hausdorff convergence
	\[
		(\Gamma,\frac{1}{t}\cdot d_x,e)\ \overto{GH}\ (G_\infty,d_\phi,e).
	\]
	Here $d_\phi(g_1,g_2)=\Phi(g_2g_1^{-1})$ with $\Phi$ from Construction~\ref{cons:CC} corresponding to 
	\begin{equation}\label{e:dx2c}
		c(\gamma,x):=d_x(e,\gamma).
	\end{equation}
\end{mthm}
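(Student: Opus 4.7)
The plan is to apply Theorem~\ref{T:NilKingman} to the cocycle $c(\gamma,x):=d_x(e,\gamma)$ and upgrade the resulting asymptotic equivalence to Gromov--Hausdorff convergence of the pointed spaces $(\Gamma,\tfrac{1}{t}d_x,e)$.

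First I would check that $c$ is a subadditive cocycle satisfying hypotheses (i)--(ii) of Theorem~\ref{T:NilKingman}. Subadditivity follows from the triangle inequality combined with right-equivariance~\eqref{e:equi-metrics}; (i) is the bi-Lipschitz hypothesis; and (ii) follows from innerness: an $\epsilon$-optimal inner decomposition uses $d_x$-steps of size less than some $R$, hence $d$-steps of size less than $R/a$ (by the lower bi-Lipschitz bound), so every $\delta_i=p_{i-1}^{-1}p_i$ lies in the finite set $F:=\{\delta:|\delta|_S\le R/a\}$, and the same $R$ serves every $x'\in\Gamma.x$ by equivariance. Theorem~\ref{T:NilKingman} then produces the homogeneous $\Phi:G_\infty\to\bbR_+$ of Construction~\ref{cons:CC}. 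Symmetry of each $d_x$ combined with equivariance yields $c(\gamma^{-1},\gamma.x)=d_{\gamma.x}(e,\gamma^{-1})=d_x(\gamma,e)=c(\gamma,x)$, so $\overline{c}$ is symmetric and the construction forces $\Phi(g^{-1})=\Phi(g)$. Together with the subadditivity, right-invariance under right-multiplication, and homogeneity under $\{\delta_t\}$ built into the Carnot--Carath\'eodory construction, this makes $d_\phi(g_1,g_2):=\Phi(g_2g_1^{-1})$ a right-invariant homogeneous metric on $G_\infty$ as claimed.

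Fix $x$ in the $\Gamma$-invariant full-measure set $X_0$ on which Theorem~\ref{T:NilKingman} holds. Convergence of distances \emph{from $e$} is then immediate: $t_i^{-1}d_x(e,\gamma_i)\to\Phi(g)=d_\phi(e,g)$ whenever $\tfrac{1}{t_i}\bullet\gamma_i\to g$. For arbitrary pairs one has $d_x(\gamma_1,\gamma_2)=c(\gamma_2\gamma_1^{-1},\gamma_1.x)$ by equivariance, and the target value is $\Phi(g_2g_1^{-1})=d_\phi(g_1,g_2)$. \textbf{The main obstacle} is that Theorem~\ref{T:NilKingman} supplies only pointwise convergence of $c(-,y)$ at each fixed $y\in X_0$, while here the base point $y_i=\gamma_{1,i}.x$ drifts along the orbit at linear speed; the elementary cocycle estimates $c(\gamma_2,x)-c(\gamma_1,x)\le c(\gamma_2\gamma_1^{-1},\gamma_1.x)\le c(\gamma_2,x)+c(\gamma_1,x)$ are consistent with the target value but not sharp enough to isolate it.

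To close the gap I would exploit the inner property of $d_x$ together with the Carnot--Carath\'eodory description of $d_\phi$. For the upper bound, approximate $d_\phi(g_1,g_2)$ to within $\epsilon$ by the $\phi$-length of a horizontal polygonal curve in $G_\infty$, lift its finitely many segments via Pansu scaling, right-translate each to a neighborhood of $e$, and sum $d_x$-distances controlled by Theorem~\ref{T:NilKingman} at the fixed base $x$ segment-by-segment. For the matching lower bound, use innerness of $d_x$ to produce paths $\gamma_{1,i}=p_0^i,\ldots,p_{n_i}^i=\gamma_{2,i}$ with steps $\delta_k^i\in F$ and $\sum_k d_x(p_{k-1}^i,p_k^i)\le(1+\epsilon)d_x(\gamma_{1,i},\gamma_{2,i})$; after rescaling by $\tfrac{1}{t_i}\bullet$, the chain $(p_k^i)_k$ approximates a horizontal curve in $G_\infty$ from $g_1$ to $g_2$ whose $\phi$-length is bounded below by $d_\phi(g_1,g_2)$ by the definition of the Carnot--Carath\'eodory distance, while a Birkhoff-type average over the finitely many functions $\{y\mapsto c(\delta,y):\delta\in F\}$ along the orbit relates $\sum_k c(\delta_k^i,p_{k-1}^i.x)$ to an evaluation at the fixed base $x$, thereby bypassing the drifting base point. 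The matched two-sided bounds, together with the properness of the rescaled spaces supplied by the bi-Lipschitz comparison with Pansu's original theorem, yield the desired Gromov--Hausdorff convergence.
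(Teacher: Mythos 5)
You correctly reduce to the cocycle $c(\gamma,x)=d_x(e,\gamma)$, verify hypotheses (i)--(ii), and identify the key obstacle: the base point $\gamma_{1,i}.x$ drifts along the orbit, so Theorem~\ref{T:NilKingman} applied at the fixed $x$ does not directly control $c(\gamma_{2,i}\gamma_{1,i}^{-1},\gamma_{1,i}.x)$. But the mechanism you offer to close this gap does not hold up.

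For the upper bound you propose to approximate $d_\phi(g_1,g_2)$ by a polygonal $\phi$-length, right-translate each segment near $e$, and ``sum $d_x$-distances controlled by Theorem~\ref{T:NilKingman} at the fixed base $x$ segment-by-segment.'' This is precisely where the moving-base-point problem reappears: after right-translating a segment from $p_{j-1}$ to $p_j$ back near the identity, the equivariance~\eqref{e:equi-metrics} gives $d_x(p_{j-1},p_j)=c(p_jp_{j-1}^{-1},\,p_{j-1}.x)$, and the base point is $p_{j-1}.x$, not $x$. Theorem~\ref{T:NilKingman} gives no control over $c(-,p_{j-1}.x)$ uniformly in the (linearly growing) translates $p_{j-1}$. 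For the lower bound you invoke a ``Birkhoff-type average over the finitely many functions $\{y\mapsto c(\delta,y):\delta\in F\}$ along the orbit.'' However the points $p_0^i.x,p_1^i.x,\dots$ are not the orbit of a single transformation — the increments $\delta_k^i$ vary within $F$ — so Birkhoff's theorem does not apply, and there is no a priori control on the empirical distribution of the base points visited by an $\epsilon$-optimal inner decomposition.

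What actually does the work in the paper is the Ergodic Theorem along Polygonal Paths (Theorem~\ref{T:erg-poly}): for fixed $T_1,\dots,T_k\in\Gamma$, one has $\frac{1}{n}c(T_j^n,\,T_{j-1}^n\cdots T_1^n.x)\to\phi(\ab{T}_j)$ for a.e.\ $x$. Proving this requires the small-perturbation lemma (Lemma~\ref{L:pert}) and the moving-target lemma (Lemma~\ref{L:moving-target}) to build the nested sets $Z_j$; it is not a corollary of Kingman or Birkhoff applied naively. Moreover, to deploy it in Theorem~\ref{T:REM} the paper concatenates a fixed admissible curve $\xi_0$ from $e$ to $g_1$ with a curve $\xi$ from $g_1$ to $g_2$, approximates the composite $\xi_1:[0,2]\to G_\infty$ by a single polygonal path $T_{2k}^n\cdots T_1^n$ starting at $e$, and compares $c(\gamma_{2,i}\gamma_{1,i}^{-1},\gamma_{1,i}.x)$ with $c(T_{2k}^{n_i}\cdots T_{k+1}^{n_i},\,T_k^{n_i}\cdots T_1^{n_i}.x)$ via the parallelogram inequality (Lemma~\ref{L:4points}). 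This concatenation is exactly what anchors the base point at $x$ throughout, and Theorem~\ref{T:erg-poly} then handles the drift internally. Your outline is missing both the concatenation device and the polygonal-path ergodic theorem, and the substitutes you propose would not close the gap.
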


One can also start from a sub-additive cocycle $c:\Gamma\times X\to \bbR_+$ and define 
\begin{equation}\label{e:c2dx}
	d_x(\gamma_1,\gamma_2):=c(\gamma_2\gamma_1^{-1},\gamma_1.x)\qquad (x\in X,\ \gamma_1,\gamma_2\in\Gamma).
\end{equation}
The resulting measurable family of functions is equivariant (as in (\ref{e:equi-metrics})), and each 
is a (possibly asymmetric) metric on $\Gamma$; 
condition~\ref{T:NilKingman}(ii) on $c$ corresponds to $d_x$ being inner.

\medskip
 
A natural example of an equivariant family of metrics as above appears in the following setting, 
known as \textbf{First Passage Percolation} model.
Fix a Cayley graph $(V,E)$ for $\Gamma$ defined by some finite symmetric generating set $S\subset\Gamma$
(so $V=\Gamma$ and $E=\{(\gamma,s\gamma) \mid \gamma\in\Gamma,\ s\in S\}$), 
and fix a $0<a<b<\infty$. 
Define $X:=[a,b]^E$ -- the space of functions $x:E\to [a,b]$; we think of $x_{(v,v')}$
as the time it takes to cross edge $(v,v')\in E$.
Since $\Gamma$ acts by automorphisms on $(V,E)$, it also acts continuously on the compact metric space $X$.
Let $m$ be some $\Gamma$-invariant ergodic Borel probability measure on $X$,
e.g. the Bernoulli measure $m=\mu^E$ where $\mu$ is some probability measure on $[a,b]$.
Every $x\in X$ defines the time it takes to cross any given edge $e\in E$ and we can define
\[
	d_x(v,v')=\inf \left\{ \sum_{i=1}^n x_{(v_{i-1},v_i)} \mid v_0=v, v_n=v', (v_{i-1},v_i)\in E \right\}
\]
to be the minimal travel time from $v$ to $v'$ in the particular realization $x\in X$ of the configuration of passage times of edges.
One is now interested in the \textbf{asymptotic shape} as $T\to\infty$ of the set 
\[
	\Ball^\Gamma_{x}(T):=\{ v\in V \mid d_x(e,v)<T \}
\]
of vertices that can be reached from the origin $e\in V$ in time $<T$, for a typical configuration $x\in X$.

\begin{mcor}\label{C:FPP}\hfill{}\\
	With the notations as above, there exists a homogeneous function $\Phi:G_\infty\to \bbR_+$, given in Construction~\ref{cons:CC}, 
	so that for $m$-a.e. $x\in X$ the sets $\Ball^\Gamma_{x}(T)$ are within $o(T)$-approximation from 
	\[
		\{ g\in G_\infty \mid \Phi(g)< T\}
	\]
	which is a $\delta_T$ image of a fixed set:
	\[
		\Ball^\Gamma_x(T)\ \sim\ \{ g\in G_\infty \mid \Phi(g)< T\}=\delta_T\left(\{ g\in G_\infty \mid \Phi(g)<1\}\right).
	\]
	Thus $\{ g\in G_\infty \mid \Phi(g)<1\}$ gives the asymptotic shape of a.e. $\Ball^\Gamma_x(T)$ rescaled by $T$ for $T\gg1$.
\end{mcor}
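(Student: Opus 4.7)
The plan is to reduce Corollary \ref{C:FPP} to a direct application of Theorem \ref{T:REM} and then unpack what the resulting Gromov-Hausdorff convergence says about the balls $\Ball^\Gamma_x(T)$. First I would verify that the FPP family $\{d_x \mid x\in X\}$ satisfies the hypotheses of Theorem \ref{T:REM}. Each $d_x$ is, by definition, the length metric on the Cayley graph where an edge $e$ has length $x_e \in [a,b]$; as an infimum over edge-paths it is automatically inner. The bi-Lipschitz comparison $a \cdot d \le d_x \le b \cdot d$ with the word metric $d$ is immediate, since any edge-path of combinatorial length $n$ has $d_x$-length in $[an, bn]$. Right-equivariance as in (\ref{e:equi-metrics}) holds because the $\Gamma$-action on $X=[a,b]^E$ is induced by its action by automorphisms on the Cayley graph, so translating a path by $\gamma$ and simultaneously translating the edge-weight configuration by $\gamma$ gives the same total weight.

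Next, Theorem \ref{T:REM} (with $c(\gamma,x) := d_x(e,\gamma)$) produces a homogeneous right-invariant metric $d_\phi(g_1,g_2)=\Phi(g_2 g_1^{-1})$ on $G_\infty$ as in Construction \ref{cons:CC}, and for $m$-a.e.\ $x$ gives
\[
    (\Gamma,\tfrac{1}{T}\cdot d_x,e)\ \overto{GH}\ (G_\infty,d_\phi,e)\qquad (T\to \infty).
\]
Since $d_\phi$ is proper and the limit is a pointed proper metric space, I would invoke the standard characterization of pointed Gromov-Hausdorff convergence: after a simultaneous isometric embedding into an auxiliary space (with $e$ aligned), closed balls of any fixed radius around the basepoint converge in the Hausdorff distance. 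Applied to the unit ball, this says the rescaled set $\tfrac{1}{T}\cdot \Ball^\Gamma_x(T)=\{v\in\Gamma\mid \tfrac{1}{T}d_x(e,v)<1\}$ lies within $o_T(1)$-Hausdorff distance of $\{g\in G_\infty\mid d_\phi(e,g)<1\}=\{g\mid \Phi(g)<1\}$.

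Finally, I would rescale back. Using the homogeneity $\Phi(\delta_T g)=T\cdot\Phi(g)$ the unit ball is carried by $\delta_T$ onto $\{g\in G_\infty\mid \Phi(g)<T\}$, and multiplying the ambient distances by $T$ turns the $o_T(1)$-Hausdorff error into the desired $o(T)$-approximation:
\[
    \Ball^\Gamma_x(T)\ \sim\ \delta_T\left(\{g\in G_\infty\mid \Phi(g)<1\}\right)=\{g\in G_\infty\mid \Phi(g)<T\}.
\]
The main obstacle, modulo Theorem \ref{T:REM}, is the bookkeeping in the last two steps: making precise that pointed GH convergence of the (discrete) Cayley graph to the (continuous) Carnot group $G_\infty$ implies a genuine $o(T)$ Hausdorff-type comparison between $\Ball^\Gamma_x(T)$ and the set $\{g\mid \Phi(g)<T\}$. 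Once the GH convergence is in hand this is a standard compactness argument, but one has to handle boundary issues (points with $\Phi(g)=1$) and choose the right notion of Hausdorff distance between a subset of $\Gamma$ and a subset of $G_\infty$, say via the embedding $\Gamma\hookrightarrow G\to G_\infty$ used throughout the paper.
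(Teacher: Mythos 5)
Your proof is correct, but it takes a slightly different route from the paper's one-line derivation. The paper deduces Corollary~\ref{C:FPP} from Theorem~\ref{T:NilKingman}: the pointwise asymptotic equivalence $|c(\gamma,x)-\Phi(\scl{1}{\gamma})|=o(|\gamma|_S)$, combined with the lower Lipschitz bound $|\gamma|_S\le k^{-1}c(\gamma,x)\le k^{-1}T$ for $\gamma\in\Ball^\Gamma_x(T)$, immediately gives, for every $\epsilon>0$ and all $T$ large (depending on $x,\epsilon$), the two-sided inclusion
\[
	\{ g\in G_\infty \mid \Phi(g)<1-\epsilon\}\ \subset\ \scl{T}{\Ball^\Gamma_x(T)}\ \subset\ \{ g\in G_\infty \mid \Phi(g)<1+\epsilon\},
\]
which is exactly how the claimed $o(T)$-approximation is to be read. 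You instead invoke Theorem~\ref{T:REM}: you verify the hypotheses for the FPP family $d_x$ (correctly --- innerness, the bi-Lipschitz comparison, and right-equivariance are all routine), and then use the general fact that pointed Gromov--Hausdorff convergence to a proper space, realized here by the maps $\scl{T}{-}$, gives Hausdorff convergence of metric balls about the basepoint. Both routes are sound and rest on the same underlying convergence machinery (the two main theorems are proved by essentially parallel arguments via $c^*$, $c_*$ and the polygonal-path approximation). The paper's route is shorter and sidesteps the boundary ambiguity you rightly flag at the end --- whether one should compare with $\{\Phi<1\}$ or $\{\Phi\le 1\}$ --- because the $1\pm\epsilon$ sandwich never refers to an exact unit ball. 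Your route has the virtue of making explicit that the shape theorem is simply the ball-convergence facet of the metric-space convergence in Theorem~\ref{T:REM}; if you fill it in, you will want to phrase the final conclusion in the same $\epsilon$-sandwiched form rather than as a literal Hausdorff distance between the discrete set $\Ball^\Gamma_x(T)\subset\Gamma$ and the continuous set $\{\Phi<T\}\subset G_\infty$.
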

It follows from Theorem~\ref{T:NilKingman} that for $m$-a.e. $x\in X$ for any $\epsilon>0$ for $T>T(x,\epsilon)$
\[
	\{ g\in G_\infty\, |\, \Phi(g)<1-\epsilon\}\ \subset\ 
	\scl{T}{\Ball^\Gamma_x(T)}\ \subset\ \{g\in G_\infty\, |\,  \Phi(g)<1+\epsilon\}
\]
which is equivalent to the statement of the Corollary.

\medskip

Let us make some remarks about these results.

(1) I.~Banjamini and R.~Tessera \cite{BT} recently established the asymptotic shape theorem for the First Passage Percolation model 
	(Corollary~\ref{C:FPP}) for the case of an \emph{independent distribution on edges},
	i.e. the measure $m=\mu^E$. 
	In this framework their result is stronger: the assumption is weaker (rather than compact support the distribution $\mu$
	is assumed to have a finite exponential moment) and there is statement of a speed for the convergence to the asymptotic shape.
	However, the proofs, being based on probabilistic techniques, do not seem to apply to the general ergodic case
	as in Corollary~\ref{C:FPP}.

(2) The abelian case $\Gamma=\bbZ^d$ was considered by Boivin \cite{Boivin} in the context of First Passage Percolation
	as in Corollary~\ref{C:FPP},
	and then by Bj\"orkland \cite{Bjorklund} in the more general context of sub-additive cocycles as in 
	Theorem~\ref{T:NilKingman}. 
	Both results are proved under weaker integrability condition, namely
	$c(\gamma,-)\in L^{d,1}(X,m)$ (Lorentz space).
	This integrability condition is known to be sharp for sub-additive cocycles over 
	general ergodic $\bbZ^d$-actions \cite{BDD}.
	We note that in \cite{Bjorklund} no \emph{a priori} innerness assumption is imposed, but in retrospect
	it is satisfied.

(3)	Assumption (ii) in Theorem~\ref{T:NilKingman} (and the corresponding assumption of innerness of metrics in Theorem~\ref{T:REM}) 
	is necessary for the limit object $\Phi$ (and $d_\phi$) to be geodesic.
	Yet, it will become clear from the proof below that this condition is not needed for the inequality
	\[
		\limsup_{\scl{t}{\gamma}\to g}\ \frac{1}{t}\cdot c(\gamma,x)\le \Phi(g)
		\qquad\qquad (g\in G_\infty)
	\]
	for $m$-a.e. $x\in X$. In fact, the proof of this inequality (see \S \ref{sub:upperbound}) does not require the lower estimate in 
	Theorem~\ref{T:NilKingman}(i); it only uses the inequality $c(\gamma,x)\le K\cdot |\gamma|_S$, which is equivalent 
	to $c(\gamma,-)\in L^\infty(X,m)$ for $\gamma\in S$ a generating set for $\Gamma$.
	
(4)	It is possible that assumption (i) in Theorem~\ref{T:NilKingman} can be relaxed. 
	Yet, note that already in the Abelian case $\Gamma=\bbZ^d$ pointwise convergence requires $L^{d,1}(X,m)$-integrability.
		
(5)	Let $\Gamma<G$ and $G_\infty$ be as above. Theorems~\ref{T:NilKingman} and \ref{T:REM} 
	show that asymptotic shapes are classified by $\Phi$ (and $d_\phi$) for some unique, possibly asymmetric, norm 
	$\phi:\ab{\mfg}_\infty\to \bbR_+$. 
	The converse also holds: for every asymmetric norm $\phi$ the associated Carnot-Carath\'eodory $\Phi$ and $d_\phi$ 
	arise as an asymptotic shape for some cocycle over $\Gamma$, in fact from a subadditive function $F:\Gamma\to\bbR_+$.
	However, the question of which asymptotic shapes (equivalently norms) 
	can appear in First Passage Percolation with independent distribution on edges
	remains widely open.

\medskip

We would like to emphasize the following remark.
\begin{remark}
	An important example of subadditive cocycles over group actions are 
	\[
		c(\gamma,x)=\log\|A(\gamma,x)\|
	\]
	where $A:\Gamma\times X\to \SL_d(\bbR)$ is a matrix valued cocycle, 
	i.e. satisfies $A(\gamma_1\gamma_2,x)=A(\gamma_1,\gamma_2.x)A(\gamma_2,x)$.
	If $\Gamma$ is not Abelian, then the results of this paper do not apply to such cocycles -- 
	they systematically fail the innerness assumption.
	Yet, for any amenable group $\Gamma$ (in particular, nilpotent) one can describe the asymptotic 
	behavior of such cocycles: they are asymptotically equivalent 
	to a homogeneous subadditive function, namely the pull-back of a norm $\phi$ on the abelianization 
	$\ab{\Gamma}_1\otimes\bbR$ for some finite index subgroup $\Gamma_1<\Gamma$.
	More precisely, the norm has the form  
	\[
		\phi(\gamma)=\max_{1\le j\le d} |\chi_j(\ab{\gamma})|
	\]
	for some characters $\chi_1,\dots,\chi_d:\ab{\Gamma_1}\otimes\bbR\to \bbR$.
	In particular, if $\Gamma$ is a non-abelian nilpotent group, such homogeneous functions do not grow along the commutator subgroup unlike
	Carnot-Carath\'eodry metrics.
	This can be shown by applying a form of Zimmer's Cocycle Reduction lemma 
	(using the fact that $\Gamma$ is amenable) that allows one to bring the cocycle to an upper triangular 
	form and read off the growth from the diagonal.
\end{remark}

\medskip

\subsection*{Plan of the paper} \hfill{}\\
In Section~\ref{sec:the_asymptotic_cone} we recall some background on graded nilpotent Lie groups,
the Carnot-Carath\'eodory construction, Pansu's fundamental result on the asymptotic cone of nilpotent groups, 
and the construction of $\phi$, $\Phi$ and $d_\phi$ associated with the
sub-additive cocycle $c:\Gamma\times X\to\bbR_+$.
Section~\ref{sec:prep} contains two basic preliminary results needed for the proofs of the main theorems.
One results concerns approximation of admissible curves in the asymptotic cone $G_\infty$ by expressions of the form 
$T_k^n\cdots T_2^n T_1^n$ that we call \emph{polygonal paths} in $\Gamma$.
The second result (Theorem~\ref{T:erg-poly}) is of independent interest; 
it is an ergodic theorem for sub-additive cocycles along above mentioned polygonal paths.
With these preparations at hand we prove Theorems~\ref{T:NilKingman} and \ref{T:REM} in Section~\ref{sec:proof}. 

\subsection*{Acknowledgements}
The authors would like to thank Itai Benjamini, Romain Tessera, and Tim Austin for their interest in this work, useful comments and encouragement. This work was supported in part by the NSF grant DMS-1207803, Simons Foundation, and MSRI.



\section{The Carnot group as the asymptotic cone} 
\label{sec:the_asymptotic_cone}

In this section we recall Pansu's construction of the asymptotic cone $(G_{\infty},d_{\infty})$ of a finitely generated nilpotent group and give our construction of $(G_{\infty},d_{\phi})$, the almost sure asymptotic cone of the random (pseudo) metric space $(\Gamma,d_x)$.

\subsection{The graded Lie algebra/group} 
\label{sub:the_graded_lie_algebra_group}\hfill{}\\
Let $\Gamma$ be a finitely generated, torsion-free, nilpotent group and $G$ be its Mal'cev completion. 
In this subsection we construct the associated Carnot group. 
Since the Lie groups here are connected and simply connected, one can work with the Lie algebras.
Let $\mfg$ be the Lie algebra of $G$, and set
\[
	\mfg^1:=\mfg, \qquad \mfg^{i+1}:=[\mfg,\mfg^{i}].
\]
Being nilpotent, $G$ satisfies $\mfg^{r+1}=\{0\}$ for some $r \in \bbN$. 
Since $[\mfg^i,\mfg^j]\subset \mfg^{i+j}$ (and in particular $[\mfg^{i+1},\mfg^j],[\mfg^i,\mfg^{j+1}]\subset \mfg^{i+j+1}$)
the Lie bracket on $\mfg$ defines a bilinear map 
\[
	\left(\mfg^{i}/\mfg^{i+1}\right)\otimes \left(\mfg^{j}/\mfg^{j+1}\right)\ \overto{}\ (\mfg^{i+j}/\mfg^{i+j+1}),
\]
which can then be used to define the Lie bracket $[-,-]_\infty$ on 
\begin{equation}\label{e:frakv-decomp}
	\mfg_{\infty}:=\bigoplus_{i=1}^{r} \frak{v}_i,\qquad\textrm{where}\qquad \frak{v}_i:=\mfg^i/\mfg^{i+1}
\end{equation}
by extending the above maps linearly.
The resulting pair $(\mfg_\infty,[-,-]_\infty)$ is called the \textbf{graded Lie algebra} associated to $\mfg$.
Note that the linear maps
\[
	\delta_t:\mfg_\infty \to \mfg_\infty,\qquad \delta_t(v_1,\dots,v_{r})=(t\cdot v_1,t^2\cdot v_2,\dots,t^{r}\cdot v_{r}),
\]
satisfy $\delta_t([v,w]_\infty)=[\delta_t(v),\delta_t(w)]_\infty$ and $\delta_{ts}=\delta_t\circ \delta_s$ 
for $v,w\in\mfg_\infty$, $t,s>0$. 
Hence $\{\delta_t \mid t>0\}$ is a one-parameter family of automorphisms of the Lie algebra $\mfg_\infty$, 
and therefore define a one-parameter family of automorphisms of the Lie group $G_\infty:=\exp_\infty(\mfg_\infty)$,
that we will still denote by $\{\delta_t \mid t>0\}$. (Here we denote the exponential map $\mfg_\infty\to G_\infty$
by $\exp_\infty$ to distinguish it from $\exp:\mfg\to G$).

The graded Lie algebra naturally appears in the following limiting procedure.
Choose a splitting of $\mfg$ as a direct sum of vector subspaces 
\begin{equation}\label{e:V-decomp}
	\mfg=V_1\oplus\cdots\oplus V_{r},\qquad\textrm{so\ that}\qquad
	\mfg^i=V_i\oplus\cdots\oplus V_{r},
\end{equation} 
and choose a vector space identification
$L:\mfg\to\mfg_\infty$ so that $L(V_i)=\frak{v}_i$ the $i$th summand of $\mfg_\infty$.
For $t>0$ define the vector space automorphism $\sigma_t$ of $\mfg$ 
by setting $\sigma_t(v)=t^i\cdot v$ for $v\in V_i$ ($i=1,\dots,r$).
Then the Lie brackets $[-,-]_t$ on $\mfg$, given by 
\[
	[v,w]_t:=\sigma_{\frac{1}{t}}\left([\sigma_t(v),\sigma_t(w)]\right),
\]
defines a Lie algebra structure on $\mfg$ that is isomorphic to the original $[-,-]=[-,-]_1$ via $\sigma_t$.
However, one has
\[
	[L(v),L(w)]_\infty=\lim_{t\to\infty} [v,w]_t
\]
due to the fact that for $v\in V_i$, $w\in V_j$ the "leading term" of $[v,w]$ lies in $V_{i+j}$,
while the higher terms that belong to $V_{i+j+1}\oplus\cdots\oplus V_{r}$ 
become insignificant under the rescaling (see \cite{Pansu}).

Using the $\log:G\to \mfg$ and $\exp_\infty:\mfg_\infty\to G_\infty$ maps 
we obtain a family of maps 
\begin{equation}\label{e:sclt}
	\scl{t}{-}:\Gamma\ \overto{<}\ G\ \overto{\log}\ \mfg\ \overto{\sigma_{t^{-1}}}\ 
	\mfg\ \overto{L}\ \mfg_\infty\ \overto{\exp_\infty}\ G_\infty\qquad (t>0)
\end{equation}
that explains the asymptotic cone description of Pansu \cite{Pansu} as follows.
Let $d$ be an inner right-invariant metric $d$ on $\Gamma$ and  
\[
	(\Gamma,\frac{1}{t}\cdot d,e)\ \overto{GH}\ (G_\infty,d_\infty,e)
\]
the Gromov-Hausdorff convergence. Then a sequence $\gamma_i\in \Gamma$, rescaled by $t_i^{-1}$
with $t_i\to\infty$ as $i\to\infty$, 
converges to $g\in G_\infty$ iff $\scl{t_i}{\gamma_i}\to g$ in $G_\infty$.
We shall often write 
\[
	g=\lim_{i\to\infty} \ssc{t_i}{\gamma_i}\qquad\textrm{instead\ of}\qquad \scl{t_i}{\gamma_i}\to g.
\]
The metric part of the statement shows that for $t_i\to\infty$ and $\gamma_i,\gamma'_i\in \Gamma$
\begin{equation}\label{e:lengthscaling}
	g=\lim_{i\to\infty}\ \ssc{t_i}{\gamma_i},\quad g'=\lim_{i\to\infty}\ \ssc{t_i}{\gamma'_i}
	\qquad
	\Longrightarrow
	\qquad
	d_\infty(g,g')=\lim_{i\to\infty} \frac{1}{t_i}\cdot d(\gamma_i,\gamma'_i).
\end{equation}
The limiting distance $d_\infty$ on $G_\infty$ is \textbf{homogeneous} in the sense that
\[
	d_\infty(\delta_s(g),\delta_s(g'))=s\cdot d_\infty(g,g')\qquad (g,g'\in G_\infty,\ s>0).
\]	
This distance is right-invariant (this follows from Lemma~\ref{L:multiplication}).
The distance $d_\infty$ appears in the sub-Finsler Carnot-Carath\'eodory construction discussed below.
Meanwhile let us point out two Lemmas.

\begin{lemma}\label{L:flattening-of-powers}
	For $\gamma\in\Gamma$ one has
	\[
		\lim_{n\to\infty}\ssc{n}{\gamma^n}=\exp_\infty(L\circ\pi\circ\log(\gamma))=\exp_\infty(\pi_\infty\circ L\circ \log(\gamma)),
	\]
	where $\pi:\mfg\to V_1$ and $\pi_\infty:\mfg_\infty:\mfg_\infty\to \frak{v}_1$ are 
	the linear projection corresponding to (\ref{e:V-decomp}), (\ref{e:frakv-decomp}). 
\end{lemma}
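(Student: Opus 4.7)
The plan is to unfold the definition of $\ssc{n}{-}$ given in \eqref{e:sclt} and perform an explicit calculation in the chosen decomposition of $\mfg$. By definition,
\[
	\ssc{n}{\gamma^n}=\exp_\infty\circ L\circ\sigma_{n^{-1}}\circ\log(\gamma^n).
\]
Since $G$ is a connected, simply connected nilpotent Lie group, the exponential map is a global diffeomorphism and $\gamma^n=\exp(n\log\gamma)$, so $\log(\gamma^n)=n\log(\gamma)$. Decompose $\log(\gamma)=v_1+v_2+\cdots+v_r$ according to the splitting $\mfg=V_1\oplus\cdots\oplus V_r$ from \eqref{e:V-decomp}, so that $v_i\in V_i$.

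Applying $\sigma_{n^{-1}}$, which acts as multiplication by $n^{-i}$ on $V_i$, I would compute
\[
	\sigma_{n^{-1}}(n\log\gamma)\;=\;\sum_{i=1}^{r} n\cdot n^{-i}\,v_i\;=\;v_1+\tfrac{1}{n}v_2+\cdots+\tfrac{1}{n^{r-1}}v_r,
\]
which converges in $\mfg$ to $v_1=\pi(\log\gamma)$ as $n\to\infty$. By continuity of $L:\mfg\to\mfg_\infty$ and of $\exp_\infty:\mfg_\infty\to G_\infty$, this yields
\[
	\lim_{n\to\infty}\ssc{n}{\gamma^n}\;=\;\exp_\infty\bigl(L\circ\pi\circ\log(\gamma)\bigr),
\]
proving the first equality.

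For the second equality I would simply observe that $L$ is chosen to be a graded vector space isomorphism, i.e.\ $L(V_i)=\frak{v}_i$ for each $i$. Consequently, for any $v=v_1+\cdots+v_r$ with $v_i\in V_i$, the image $L(v)=L(v_1)+\cdots+L(v_r)$ is the corresponding decomposition in $\mfg_\infty=\frak{v}_1\oplus\cdots\oplus\frak{v}_r$, so $\pi_\infty(L(v))=L(v_1)=L(\pi(v))$. This gives $L\circ\pi=\pi_\infty\circ L$ on all of $\mfg$, and the two formulas for the limit agree.

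There is essentially no obstacle here beyond bookkeeping; the only point requiring a moment of care is the identity $\log(\gamma^n)=n\log(\gamma)$, which is valid because it holds inside the one-parameter subgroup $\{\exp(t\log\gamma):t\in\bbR\}$ of $G$. After that, the statement is a direct computation showing that the rescaling $\sigma_{n^{-1}}$ annihilates in the limit everything except the $V_1$-component, which is exactly the content of the ``flattening'' phenomenon that underlies Pansu's passage to the Carnot group.
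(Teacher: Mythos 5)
Your proposal is correct and follows essentially the same route as the paper: both unfold $\ssc{n}{\gamma^n}$ via \eqref{e:sclt}, use $\log(\gamma^n)=n\log(\gamma)$, decompose according to $\mfg=V_1\oplus\cdots\oplus V_r$, and observe that the rescaling $\sigma_{n^{-1}}$ sends all components of degree $\ge 2$ to zero in the limit, leaving $v_1=\pi(\log\gamma)$. Your explicit verification that $L\circ\pi=\pi_\infty\circ L$ (because $L$ respects the grading) is a small addition the paper leaves implicit, but the substance is identical.
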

\begin{proof}
	Denote by $\pi_k:\mfg\to V_k$ ($k=1,\dots,r$) the linear projections according to (\ref{e:V-decomp}), so $\pi=\pi_1$. Then
	\[
		\ssc{n}{\gamma^n}=\exp_\infty\left(\sum_{k=1}^r \frac{1}{n^k}\cdot L\circ\pi_k\circ\log(\gamma^n)\right)
		=\exp_\infty\left(L\circ\pi_1\circ\log(\gamma)+\sum_{k=2}^r \frac{1}{n^{k-1}}\cdot L\circ\pi_k\circ\log(\gamma)\right)
	\] 
	and, since $n^{-k+1}\cdot L\circ\pi_k\circ\log(\gamma)\to0$ for $2\le k\le r$, the statement is clear.
\end{proof}

\begin{lemma}\label{L:multiplication}\hfill{}\\
	Given sequences $t_i\to\infty$, $\gamma_i,\gamma'_i\in\Gamma$ with $\ssc{t_i}{\gamma_i}\to g$ and $\ssc{t_i}{\gamma'_i}\to g'$
	then $\ssc{t_i}{\gamma_i\gamma'_i}\to gg'$.
\end{lemma}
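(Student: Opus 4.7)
The plan is to reduce the multiplicative statement in $G_\infty$ to an additive Baker--Campbell--Hausdorff (BCH) computation in $\mfg$, then track how BCH transforms under $\sigma_{t^{-1}}$ and $L$ as $t\to\infty$. Write $X_i:=\log(\gamma_i)$ and $Y_i:=\log(\gamma'_i)\in\mfg$, so that $\log(\gamma_i\gamma'_i)=X_i*Y_i$, where $*$ denotes the BCH product associated with $[-,-]$. Since $\mfg$ is nilpotent of class $r$, $X*Y$ is a \emph{finite} polynomial in iterated $[-,-]$-brackets of $X$ and $Y$, which is exactly what will allow a term-by-term passage to the limit.

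The first step is the intertwining identity
\[
	\sigma_{t^{-1}}(X*Y)\;=\;\sigma_{t^{-1}}(X)\,*_t\,\sigma_{t^{-1}}(Y),
\]
where $*_t$ denotes the BCH product formed from the rescaled bracket $[-,-]_t$. This follows by applying $\sigma_{t^{-1}}([A,B])=[\sigma_{t^{-1}}(A),\sigma_{t^{-1}}(B)]_t$ inductively to each bracket appearing in BCH. Setting $u_i:=\sigma_{t_i^{-1}}(X_i)$ and $v_i:=\sigma_{t_i^{-1}}(Y_i)$, the hypotheses $\ssc{t_i}{\gamma_i}\to g$ and $\ssc{t_i}{\gamma'_i}\to g'$ unfold via (\ref{e:sclt}) into $L(u_i)\to\xi:=\exp_\infty^{-1}(g)$ and $L(v_i)\to\eta:=\exp_\infty^{-1}(g')$ in $\mfg_\infty$.

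Applying $L$ to the displayed identity and letting $t_i\to\infty$, I will use that BCH is a finite polynomial and that the convergence $[-,-]_{t_i}\to[-,-]_\infty$ (under the identification via $L$) recorded just before Lemma~\ref{L:flattening-of-powers} extends by continuity to iterated brackets, even with arguments varying in $i$. Each BCH summand then converges, yielding
\[
	L\bigl(u_i*_{t_i}v_i\bigr)\;\longrightarrow\;\xi*_\infty\eta\;=\;\exp_\infty^{-1}(gg'),
\]
and applying $\exp_\infty$ gives $\ssc{t_i}{\gamma_i\gamma'_i}\to gg'$, as claimed.

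The step requiring the most care is the joint bracket limit $L\bigl([u_i,v_i]_{t_i}\bigr)\to[\xi,\eta]_\infty$ when \emph{both} the arguments and the parameter $t_i$ move simultaneously. The cleanest way is to decompose $u_i=\sum_p u_i^{(p)}$ with $u_i^{(p)}\in V_p$ (each summand converging, since $L$ matches the two gradings), and similarly for $v_i$; then $[u_i,v_i]_{t_i}=\sigma_{t_i^{-1}}\bigl([\sigma_{t_i}(u_i),\sigma_{t_i}(v_i)]\bigr)$ expands as a sum of terms scaling as $t_i^{\,p+q-k}$ with $k\ge p+q$, so that only the leading ($k=p+q$) contributions survive in the limit and assemble into the graded bracket on $\mfg_\infty$. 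Iterating the same bookkeeping handles every iterated bracket appearing in BCH, thereby legitimizing the term-by-term passage to the limit above.
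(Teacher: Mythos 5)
Your proof is correct and takes essentially the same approach as the paper, whose own proof is a one-line citation to the Baker--Campbell--Hausdorff formula (referring to Breuillard) without further detail. You have filled in the BCH argument carefully, and the decomposition-by-degrees bookkeeping you use for the joint limit $L\bigl([u_i,v_i]_{t_i}\bigr)\to[\xi,\eta]_\infty$ is exactly what is needed to justify the term-by-term passage to the limit.
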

\begin{proof}
	This follows from the Baker-Campbell-Hausdorff formula (cf. \S 3.3 and the proof of Lemma~5.5 in \cite{Bre}).
\end{proof}

\bigskip

\subsection{Carnot-Carath\'eodory constructions} 
\label{sub:CC-construction}\hfill{}\\
We follow \cite[(17)-(20)]{Pansu}.
Denote by $\ab{\mfg}_\infty$ the abelianization of the graded Lie algebra $\mfg_\infty$. 
It is isomorphic to the abelianization $\ab{\mfg}$ of $\mfg$, and can also be identified with the direct summand $\frak{v}_1$ of $\mfg_\infty$:
\[
	\ab{\mfg}_\infty\cong \ab{\mfg}\cong\frak{v}_1\ <\ \bigoplus_{i=1}^{r} \,\frak{v}_i=\mfg_\infty.
\]
Vectors in $\frak{v}_1<\mfg_\infty$ are called \textbf{horizontal}.
A tangent vector $v\in T_g G_\infty$ at $g\in G_\infty$ is horizontal if its right-translate under $g^{-1}$ is in 
$\frak{v}_1<\mfg_\infty=T_eG_\infty$.
Hence the horizontal vectors form a sub-bundle of the tangent bundle $TG_\infty$; this is a totally non-integrable sub-bundle because 
$\mfg_\infty$ is generated as a Lie algebra by $\frak{v}_1$. 
Let us say that a continuous piecewise smooth curve $\xi:[a,b]\to G_\infty$ 
whose tangent vectors $\xi'(t)$ are horizontal for Lebesgue a.e. $t\in[a,b]$ are \textbf{admissible}.
Any two points $g_1,g_2\in G_\infty$ can be connected by an admissible curve -- this follows from total non-integrability of 
the sub-bundle of horizontal vectors by Chow's theorem.

Let $\phi:\ab{\mfg}_\infty\to \bbR_+$ be an \textbf{asymmetric norm} (or rather a not necessarily symmetric norm), 
that is assume $\phi$ satisfies for all $v,w\in \ab{\mfg}_\infty$, $t>0$, and some $0<a\le b<\infty$:
\begin{equation}\label{e:asym-norm}
	\begin{split}
		&\phi(v+w)\le \phi(v)+\phi(w),\\
		&\phi(t\cdot v)=t\cdot \phi(v),\\
		&a\cdot\|v\|\le \phi(v)\le b\cdot\|v\|
	\end{split}
\end{equation}
for some reference Euclidean norm $\|-\|$. Such an asymmetric norm $\phi$ can be used to measure horizontal vectors in $TG_\infty$ by right-translating them back to
$\frak{v}_1<\mfg_\infty=T_eG_\infty$.
Given a curve $\xi:[\alpha,\beta]\to G_\infty$ as above its $\phi$-\textbf{length} is defined to be
\begin{equation}\label{e:phi-len}
	\len_\phi(\xi):=\int_\alpha^\beta \phi(\xi'(t)\xi(t)^{-1})\,dt.
\end{equation}
We define the $\phi$-\textbf{distance} by
\[
	d_\phi(g_1,g_2):=\inf\left\{ \len_\phi(\xi) \left|\ \xi\ \textrm{is\ an\ admissible\ curve\ from\ }
	g_1\ \textrm{to}\ g_2 \right.\right\}.
\]
Starting from a fixed Euclidean norm $\|-\|$ on $\ab{\mfg}_\infty$, one obtains the \emph{sub-Riemannian} metric $d_{\|-\|}$ on $G_\infty$,
also known as a \emph{Carnot-Carath\'eodory} metric; it is right-invariant, homogeneous with respect to the homotheties $\{\delta_t \mid t>0\}$,
and defines the usual topology on $G_\infty$.

For a general asymmetric norm $\phi:\ab{\mfg}_\infty\to \bbR_+$ as in (\ref{e:asym-norm}) we obtain
\[
	d_\phi:G_\infty\times G_\infty\ \overto{}\ \bbR_+,
\] 
that is a right-invariant, homogeneous, asymmetric metric, bi-Lipschitz to a Carnot-Carath\'eodory metric:
\begin{equation}\label{e:dphi-properties}
	\begin{split}
		&d_\phi(g_1g,g_2g)=d_\phi(g_1,g_2),\\
		&d_\phi(\delta_t(g_1),\delta_t(g_2))=t\cdot d_\phi(g_1,g_2),\\
		&d_\phi(g_1,g_2)\le d_\phi(g_1,h)+d_\phi(h,g_2),\\
		&a\cdot d_{\|-\|}(g_1,g_2) \le d_\phi(g_1,g_2)\le b\cdot d_{\|-\|}(g_1,g_2).
	\end{split}
\end{equation}
Being right-invariant $d_\phi$ is completely determined by the function
\[
	\Phi:G_\infty\to\bbR,\qquad \Phi(g):=d_\phi(e,g),
	\qquad
	d_\phi(g_1,g_2)=\Phi(g_2g_1^{-1}).
\]
This function $\Phi$ is sub-additive, homogeneous, and bi-Lipschitz to a Carnot-Carath\'eodory norm
\begin{equation}\label{e:Phi-properties}
	\begin{split}
		&\Phi(\delta_t(g))=t\cdot \Phi(g),\\
		&\Phi(g_1g_2)\le \Phi(g_1)+\Phi(g_2),\\
		&a\cdot d_{\|-\|}(e,g) \le \Phi(g)\le b\cdot d_{\|-\|}(e,g).
	\end{split}
\end{equation}
If $\phi$ is actually a norm, i.e. $\phi(-v)=v$, then $\Phi$ and $d_\phi$ are also symmetric: $\Phi(g^{-1})=\Phi(g)$ and $d_\phi(g_1,g_2)=d_\phi(g_2,g_1)$.
In this case $d_\phi$ is a sub-Finsler Carnot-Carath\'eodory metric on $G_\infty$ defined by the norm $\phi$.
Hereafter we shall use the term \textbf{Carnot-Carath\'eodory metric} (or just a \textbf{CC-metric}) 
when referring to a possibly asymmetric $d_\phi$
associated to $\phi$ as in (\ref{e:asym-norm}).
Pansu's metric $d_\infty$ on $G_\infty$, referred to in the previous section, 
is the Carnot-Carath\'eodory metric associated to a certain norm on $\mfg_\infty$,
that itself is determined by the given inner right-invariant metric $d$ on $\Gamma$ \cite{Pansu}.
The proof does not really use the symmetry assumption, so it can be applied almost verbatim to 
asymmetric norms. 
The infimum in the definition of $d_\phi(g_1,g_2)$ is achieved by a (unique) curve, that will be called a 
$d_\phi$-\textbf{geodesic}. But we shall use this fact only in reference to $d_\infty$ (or the classical $d_{\|-\|}$).

The notion of $\phi$-length can be extended to curves $\xi:[0,1]\to G_\infty$ that are $d_\infty$-\textbf{rectifiable}, 
i.e. ones for which
\[
	\sup \left\{  \sum_{j=1}^n d_\infty\left(\xi(s_{j-1}),\xi(s_j)\right) \mid n\in\bbN,\ 0=s_0<s_1<\dots<s_n=1\right\} \ <\ +\infty.
\]
Pansu shows (\cite{Pansu}) that such a curve is absolutely continuous, a.e. differentiable on $[0,1]$, and that
its derivative is a.e. horizontal, so the integral (\ref{e:phi-len}) makes sense. 
The $\phi$-length of such curves can also be defined by
\[
	\len_\phi(\xi)=\sup\left\{ \sum_{j=1}^n d_\phi\left(\xi(s_{j-1}),\xi(s_j)\right) \mid n\in\bbN,\ 0=s_0<s_1<\dots<s_n=1\right\}.
\]


\medskip

\subsection{From a sub-additive function $F:\Gamma\to \bbR_+$ to a CC-metric on $G_\infty$}
\label{sub:CC-construction4fun} \hfill{}\\
Consider a sub-additive function $F:\Gamma\to\bbR_+$ that is bi-Lipschitz to a word metric, i.e. satisfies
\begin{equation}\label{e:asym-dist}
	\begin{split}
		& F(\gamma_1\gamma_2)\le F(\gamma_1)+F(\gamma_2),\\
		&a\cdot d(e,\gamma) \le F(\gamma)\le b\cdot d(e,\gamma),
	\end{split}
\end{equation}
for some constants $0<a\le b<\infty$. 
Note that the upper linear bound $F(\gamma)\le b d(e,\gamma)$
follows automatically from subadditivity and the fact that $\Gamma$ is finitely generated;
so the content of the second assumption is the lower linear bound for $F:\Gamma\to\bbR_+$.

Such a function induces a subadditive function 
\[
	f:\ab{\Gamma}\to\bbR_+
\] 
using the following general construction.
\begin{lemma}\label{L:sub-to-quo}
	Let $1\to \Delta\to\Gamma\to\Lambda\to 1$ be a short exact sequence of groups,
	and $F:\Gamma\to \bbR_+$ a subadditive function. Then the function
	\[
		f:\Lambda\to \bbR_+\qquad\textrm{defined\ by}\qquad f(\gamma \Delta):=\inf\{ F(\gamma \delta) \mid \delta\in \Delta\}
	\]
	is subadditive.
\end{lemma}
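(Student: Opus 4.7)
The plan is to verify well-definedness of $f$ on cosets and then reduce subadditivity of $f$ to subadditivity of $F$, using crucially the fact that $\Delta$ is normal in $\Gamma$ (which is automatic since in the short exact sequence $\Delta$ is the kernel of $\Gamma\to \Lambda$).

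First I would check that $f(\gamma\Delta)$ depends only on the coset $\gamma\Delta$. If $\gamma'=\gamma\delta_0$ for some $\delta_0\in\Delta$, then the set $\{\gamma'\delta \mid \delta\in\Delta\}=\{\gamma\delta_0\delta\mid \delta\in\Delta\}$ equals $\{\gamma\delta' \mid \delta'\in\Delta\}$, so the two infima agree. This makes $f:\Lambda\to\bbR_+$ well-defined.

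For subadditivity, fix cosets $\lambda_1=\gamma_1\Delta$ and $\lambda_2=\gamma_2\Delta$ and an $\epsilon>0$. Choose $\delta_1,\delta_2\in\Delta$ with
\[
F(\gamma_1\delta_1)\le f(\lambda_1)+\tfrac{\epsilon}{2},\qquad F(\gamma_2\delta_2)\le f(\lambda_2)+\tfrac{\epsilon}{2}.
\]
Using normality of $\Delta$, the element $\delta_1':=\gamma_2^{-1}\delta_1\gamma_2$ lies in $\Delta$, so
\[
(\gamma_1\delta_1)(\gamma_2\delta_2)=\gamma_1\gamma_2\cdot \delta_1'\delta_2,
\]
which is an element of the coset $\lambda_1\lambda_2=\gamma_1\gamma_2\Delta$. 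Applying subadditivity of $F$ gives
\[
f(\lambda_1\lambda_2)\ \le\ F\bigl((\gamma_1\delta_1)(\gamma_2\delta_2)\bigr)\ \le\ F(\gamma_1\delta_1)+F(\gamma_2\delta_2)\ \le\ f(\lambda_1)+f(\lambda_2)+\epsilon.
\]
Letting $\epsilon\to 0$ yields $f(\lambda_1\lambda_2)\le f(\lambda_1)+f(\lambda_2)$, as desired.

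There is no real obstacle here; the only subtle point is the use of normality of $\Delta$ to conjugate $\delta_1$ past $\gamma_2$ so that the product $(\gamma_1\delta_1)(\gamma_2\delta_2)$ lands back in the coset $\gamma_1\gamma_2\Delta$. Without this step one would be controlling $F$ on the wrong coset. The finiteness of $f$ will in practice be guaranteed by the bi-Lipschitz bounds \eqref{e:asym-dist} on $F$ (once applied in the ambient setting of Construction~\ref{cons:CC}), but for the statement of the lemma as written one only needs $F:\Gamma\to\bbR_+$ to be subadditive.
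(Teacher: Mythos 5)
Your proof is correct and takes essentially the same route as the paper: pick near-optimal representatives of each coset, multiply them, and invoke subadditivity of $F$; the paper simply absorbs your $\delta_i$ into the choice of representative $\gamma_i$ and leaves the normality of $\Delta$ (needed for $\gamma_1\Delta\cdot\gamma_2\Delta=\gamma_1\gamma_2\Delta$) implicit, while you spell it out. Your added remarks on well-definedness and on where normality enters are correct and make the argument more self-contained, but do not change its substance.
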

\begin{proof}
	Given $\lambda_1,\lambda_2\in\Lambda$ and $\epsilon>0$ choose $\gamma_1,\gamma_2\in\Gamma$ so that
	$\lambda_i=\gamma_i\Delta$ and $F(\gamma_i)\le f(\lambda_i)+\epsilon$ for $i=1,2$.
	Then $\lambda_1\lambda_2=\gamma_1\gamma_2\Delta$, so
	\[
		f(\lambda_1\lambda_2)\le F(\gamma_1\gamma_2)\le F(\gamma_1)+F(\gamma_2)\le f(\lambda_1)+f(\lambda_2)+2\epsilon.
	\]
	Since $\epsilon>0$ is arbitrary we get $f(\lambda_1\lambda_2)\le f(\lambda_1)+f(\lambda_2)$.
\end{proof}

Now recall that $\Gamma$ is a uniform lattice in its Mal'cev completion $G$.
In fact, viewing $G$ as the $\bbR$-points $G=\mathbf{G}_\bbR$ of a $\bbQ$-algebraic group $\mathbf{G}$, 
we may think of $\Gamma$ as (commensurable to) $\mathbf{G}_\bbZ$.
Taking the abelianization is a $\bbQ$-algebraic operation, hence $\ab{\Gamma}$ is (commensurable to) 
$\ab{\mathbf{G}}_\bbZ$, a lattice in $\ab{\mathbf{G}}_\bbR=\ab{G}$.
Hence $\ab{\Gamma}$, that is abstractly isomorphic to $\bbZ^d$,
is a lattice in $\ab{G}$, that is continuously isomorphic to $\bbR^d$ with $d=\dim{\frak{v}_1}$.
One often writes
\[
	\ab{G}=\ab{\Gamma}\otimes \bbR
\]
to emphasize that $\ab{\Gamma}$ is a lattice in the real vector space $\ab{G}$.

\begin{lemma}\label{L:sub-on-Zd}
	Let $\Lambda$ be a lattice in a finite dimensional real vector space $V$, and $f:\Lambda\to\bbR_+$ be a subadditive function.
	Then there exists a unique homogeneous subadditive function $\phi:V\to\bbR_+$ so that $f$ is asymptotically equivalent to 
	$\phi|_\Lambda$; in particular
	\[
		\phi(\lambda)=\lim_{n\to\infty} \frac{1}{n}f(n\lambda)=\inf_{n\ge 1}\frac{1}{n}f(n\lambda).
	\]
	Moreover, if $c_1\le f(\lambda)/\|\lambda\|\le c_2$ on $\Lambda$, then
	$c_1\le \phi(v)/\|v\|\le c_2$ on $V\setminus\{0\}$.
\end{lemma}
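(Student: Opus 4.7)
First I would define $\phi$ on $\Lambda$ by applying Fekete's lemma to the one-dimensional subadditive sequence $n\mapsto f(n\lambda)$: the limit $\phi(\lambda):=\lim_{n\to\infty}f(n\lambda)/n$ exists and equals $\inf_{n\ge1}f(n\lambda)/n$. Subadditivity of $\phi$ on $\Lambda$ and the identity $\phi(n\lambda)=n\phi(\lambda)$ for $n\in\bbN$ follow by passing to the limit in $f(n(\lambda_1+\lambda_2))\le f(n\lambda_1)+f(n\lambda_2)$ and in the defining sequence along the subsequence $nm$, while the linear sandwich $c_1\|\lambda\|\le\phi(\lambda)\le c_2\|\lambda\|$ is inherited directly from $f$.

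\textbf{Extending to $V$.} Next I would extend $\phi$ to $\Lambda\otimes\bbQ$ by positive rational homogeneity, setting $\phi(\lambda/q):=\phi(\lambda)/q$; this is well defined by integer-homogeneity, and subadditivity persists after reducing pairs of rational vectors to a common denominator. Combining subadditivity with $\phi\le c_2\|\cdot\|$ gives
\[
	|\phi(v_1)-\phi(v_2)|\le c_2\|v_1-v_2\|\qquad(v_1,v_2\in\Lambda\otimes\bbQ),
\]
via the standard estimate $\phi(v_1)\le\phi(v_2)+\phi(v_1-v_2)$ and its symmetric counterpart. Hence $\phi$ extends uniquely to a Lipschitz function on $V$, and subadditivity, positive homogeneity, and both linear bounds pass to $V$ by density and continuity.

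\textbf{Asymptotic equivalence}, the main step. The upper bound $\phi(\lambda)\le f(\lambda)$ holds on $\Lambda$ by taking $n=1$; I need $f(\lambda)-\phi(\lambda)=o(\|\lambda\|)$ in the other direction. Fix $\epsilon>0$. Using density of $\Lambda\otimes\bbQ$, compactness of the unit sphere $S\subset V$, continuity of $\phi$, and the infimum definition of $\phi$, I would produce a finite collection $\nu_1,\ldots,\nu_N\in\Lambda$ whose directions $\nu_j/\|\nu_j\|$ cover $S$ by $\epsilon$-neighborhoods and which satisfy $f(\nu_j)/\|\nu_j\|\le \phi(\nu_j)/\|\nu_j\|+\epsilon$ (achievable since $f(n\mu)/n\downarrow\phi(\mu)$). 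Then for large $\lambda$, pick $j$ with $\lambda/\|\lambda\|$ close to $\nu_j/\|\nu_j\|$, set $k=\lfloor\|\lambda\|/\|\nu_j\|\rfloor$, and $r=\lambda-k\nu_j\in\Lambda$. Elementary geometry gives $\|r\|\le 2\epsilon\|\lambda\|$ once $\|\lambda\|\ge\|\nu_j\|/\epsilon$, and subadditivity of $f$ and $\phi$ chain to
\[
	f(\lambda)\le kf(\nu_j)+c_2\|r\|\le \phi(k\nu_j)+\epsilon\,k\|\nu_j\|+2c_2\epsilon\|\lambda\|\le\phi(\lambda)+C\epsilon\|\lambda\|
\]
for a constant $C$ depending only on $c_2$, using $\phi(k\nu_j)=\phi(\lambda-r)\le\phi(\lambda)+\phi(-r)\le\phi(\lambda)+c_2\|r\|$.

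\textbf{Main obstacle and uniqueness.} The delicate point is the uniformity of this finite-cover argument: the approximants $\nu_j$ must be chosen in advance of $\lambda$, yet the remainder $r$ must be controlled at scale $\epsilon\|\lambda\|$ independent of direction, so that both the "truncation" and "rounding" errors can be absorbed into a single $O(\epsilon)\|\lambda\|$ term. Once this is in place, uniqueness is easy: any two candidates $\phi_1,\phi_2$ agree on $\Lambda$ because $(\phi_1-\phi_2)(n\lambda)=n(\phi_1-\phi_2)(\lambda)$ must be $o(n)$, hence on $\Lambda\otimes\bbQ$ by homogeneity, and hence on $V$ by continuity.
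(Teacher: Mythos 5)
The paper does not prove this lemma at all: it is stated and followed only by the remark ``This is an easy and well known fact; but see Burago's \cite{Bu} for much finer results in case of a coarsely geodesic metric.'' So there is no in-text argument to compare against. Your proof is correct and self-contained, and the direction-covering argument you use is indeed the standard elementary way to establish the asymptotic equivalence $f-\phi=o(\|\cdot\|)$. Two minor observations. First, the upper bound $f(\lambda)\le c_2\|\lambda\|$, which you use to control $f(r)$, $\phi(-r)$, and the Lipschitz extension, is automatic from subadditivity and finiteness of $f$ on a generating set; only the lower bound $c_1$ in the ``Moreover'' clause is a genuine hypothesis, and you correctly never use it in the existence/uniqueness part. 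Second, in the uniqueness step you invoke continuity of $\phi$ on $V$; this is indeed automatic (a finite-valued, positively homogeneous, subadditive function on $\bbR^d$ is convex, hence continuous), but since the lemma only asserts $\phi$ is ``homogeneous subadditive,'' it is worth noting this explicitly rather than taking continuity as part of the hypothesis. Also a cosmetic point: writing $f(n\mu)/n\downarrow\phi(\mu)$ suggests monotone decrease, which Fekete's lemma does not give; all you need (and all you use) is that $\phi(\mu)=\inf_n f(n\mu)/n$, so some $n$ gets within $\epsilon$.
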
 

This is an easy and well known fact; but see Burago's \cite{Bu} for much finer results in case of a coarsely geodesic metric.

\begin{remark}\label{R:inner4abelian}
	It follows that any subadditive function $f:\bbZ^d\to \bbR_+$ is automatically \textbf{inner} in the following sense:
	given $\epsilon>0$ there is $R<\infty$ so that any $\lambda\in\bbZ^d$ can be written as $\lambda=\lambda_1+\dots+\lambda_n$
	with 
	\[
		f(\lambda_i)\le R\quad(i=1,\dots,n),\qquad f(\lambda_1)+\dots+f(\lambda_n)\le (1+\epsilon)\cdot f(\lambda).
	\]
	Indeed, this is clear for the asymmetric norm $\phi:\bbR^d\to\bbR_+$ associated with $f$ in Lemma~\ref{L:sub-on-Zd},
	and translates to $f$ by the virtue of the approximation.
\end{remark}

\medskip

\begin{lemma}\label{L:turning-to-horizontal}
	Let $F:\Gamma\to \bbR_+$ be a subadditive function, $f:\ab{\Gamma}\to\bbR_+$ and $\phi:\ab{\Gamma}\otimes \bbR\to\bbR_+$ 
	be defined by Lemmas~\ref{L:sub-to-quo} and \ref{L:sub-on-Zd}. Then for any $\gamma\in\Gamma$ one has
	\[
		\lim_{n\to\infty}\ \frac{1}{n} F(\gamma^n)\ =\ \inf_{n\ge 1}\ \frac{1}{n} F(\gamma^n)
		\ =\ \lim_{n\to\infty}\ \frac{1}{n} f((\ab{\gamma})^n)\ =\ \inf_{n\ge 1}\ \frac{1}{n} f((\ab{\gamma})^n)
		\ =\ \phi(\ab{\gamma})
	\]
	for any $\gamma\in\Gamma$ with $\ab{\gamma}=\gamma [\Gamma,\Gamma]$ denoting the image in $\ab{\Gamma}$.
\end{lemma}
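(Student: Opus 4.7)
The plan is to reduce the five equalities to a single interpolation. The outer equalities on the $F$-side, $\lim_n F(\gamma^n)/n=\inf_n F(\gamma^n)/n$, are Fekete's lemma applied to the $n$-subadditive sequence $F(\gamma^n)$. The outer equalities on the $f$-side, together with their common value $\phi(\ab{\gamma})$, are Lemma~\ref{L:sub-on-Zd} applied to $f$ along $\ab{\gamma}\in\ab{\Gamma}\subset\ab{\Gamma}\otimes\bbR$. So the only real content is the middle identity $\lim_n F(\gamma^n)/n=\phi(\ab{\gamma})$. The lower bound is painless: for each $n$, $\phi(\ab{\gamma})\le f((\ab{\gamma})^n)/n\le F(\gamma^n)/n$ by Lemma~\ref{L:sub-on-Zd} and the defining infimum of $f$ in Lemma~\ref{L:sub-to-quo}, whence $\phi(\ab{\gamma})\le\inf_n F(\gamma^n)/n$.

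For the upper bound I fix $\epsilon>0$ and use Lemma~\ref{L:sub-on-Zd} to choose $m$ with $f((\ab{\gamma})^m)\le m(\phi(\ab{\gamma})+\epsilon)$, and then $\delta\in[\Gamma,\Gamma]$ realizing the infimum defining $f$ up to $\epsilon$, so that $F(\gamma^m\delta)\le m(\phi(\ab{\gamma})+2\epsilon)$. Since $(\gamma^m\delta)^N$ and $\gamma^{mN}$ have the same image in $\ab{\Gamma}$, they differ by some $\tau_N\in[\Gamma,\Gamma]$, i.e. $(\gamma^m\delta)^N=\gamma^{mN}\tau_N$, and subadditivity of $F$ gives
\[
F(\gamma^{mN})\le N\cdot F(\gamma^m\delta)+F(\tau_N^{-1})\le mN(\phi(\ab{\gamma})+2\epsilon)+F(\tau_N^{-1}).
\]
Once I establish $F(\tau_N^{-1})=o(N)$, dividing by $mN$, sending $N\to\infty$, and then $\epsilon\to 0$ gives $\lim_n F(\gamma^n)/n\le\phi(\ab{\gamma})$.

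The estimate $F(\tau_N^{-1})=o(N)$ is the heart of the matter, and I will extract it from Pansu's theorem. Since $\log\delta\in[\mfg,\mfg]=\mfg^2\subset\ker\pi$, the Baker-Campbell-Hausdorff formula gives $\pi\log(\gamma^m\delta)=m\pi\log\gamma=\pi\log(\gamma^m)$, so Lemma~\ref{L:flattening-of-powers} supplies the same limit in $G_\infty$ for both sequences:
\[
\lim_{N\to\infty}\ssc{N}{(\gamma^m\delta)^N}\ =\ \exp_\infty(mL\pi\log\gamma)\ =\ \lim_{N\to\infty}\ssc{N}{\gamma^{mN}}.
\]
Lemma~\ref{L:multiplication} then forces $\ssc{N}{\tau_N}=\ssc{N}{\gamma^{-mN}(\gamma^m\delta)^N}\to e\in G_\infty$, and Pansu's convergence (\ref{e:lengthscaling}) translates this into $|\tau_N|_S/N\to 0$. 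The upper Lipschitz bound in (\ref{e:asym-dist}) finally yields $F(\tau_N^{-1})\le b|\tau_N|_S=o(N)$, as needed.

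The main obstacle is exactly this last step: the naive estimate from the expansion $\tau_N=\prod_{k=0}^{N-1}\gamma^{-mk}\delta\gamma^{mk}$ together with $|\gamma^{-mk}\delta\gamma^{mk}|_S\le 2mk|\gamma|_S+|\delta|_S$ only yields $|\tau_N|_S=O(N^2)$; the subquadratic cancellation, which is precisely what Pansu's asymptotic cone description of $\Gamma$ encodes, is essential.
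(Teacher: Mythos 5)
Your proof is correct and follows essentially the same route as the paper: both arguments reduce to the sublinear-error estimate furnished by Pansu's machinery (Lemma~\ref{L:flattening-of-powers}, Lemma~\ref{L:multiplication}, and (\ref{e:lengthscaling})), which the paper phrases as $\ab{\gamma}_1=\ab{\gamma}_2 \Rightarrow d(\gamma_1^n,\gamma_2^n)=o(n)$ and you phrase as $|\tau_N|_S=o(N)$ with $\tau_N=\gamma^{-mN}(\gamma^m\delta)^N$ — the same quantity. The only difference is that you spell out explicitly the interpolation step the paper dispatches as "easily seen."
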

\begin{proof}
	The sequence $a_n=F(\gamma^n)$ is sub-additive, i.e. $a_{n+m}\le a_n+a_m$ for all $n,m\in\bbN$; 
	hence $a_n/n$ converges to $\inf a_n/n$.
	Next we note that Lemmas~\ref{L:flattening-of-powers} and relation (\ref{e:lengthscaling}) imply that
	whenever $\gamma_1,\gamma_2\in\Gamma$ satisfy $\ab{\gamma}_1=\ab{\gamma}_2$ one has 
	\[
		\lim_{n\to\infty}\frac{1}{n}\cdot d(\gamma_1^n,\gamma_2^n)=0.
	\]
	Since any sub-additive function is automatically Lipschitz with respect to the word metric,
	it follows that $\lim F(\gamma_1^n)/n=\lim F(\gamma_2^n)/n$. 
	Thus this limit of $F(\gamma^n)/n$ depends only on $\ab{\gamma}$, and is easily seen to be
	$\lim f((\ab{\gamma})^n)/n$, i.e. $\phi(\ab{\gamma})$.
\end{proof}

\medskip

%

We can now apply the Carnot-Carath\'eodory construction to define a (possibly asymmetric) metric $d_\phi$ on $G_\infty$ by
\begin{equation}\label{e:dphi}
	d_\phi(g,g'):=\inf\left\{ \len_\phi(\xi) \left|\ \xi\textrm{\ is\ an\ admissible\ curve\ from\ }g\ \textrm{to\ } g' \right.\right\}.
\end{equation}

\medskip

\subsection{From a cocycle $c:\Gamma\times X\to \bbR_+$ to the CC-metric} 
\label{sub:cocycle2cc}\hfill{}\\
Let $\Gamma$ be a finitely generated, virtually nilpotent group, $\Gamma\acts (X,m)$ an ergodic p.m.p. action, 
and $c:\Gamma\times X\to \bbR_+$ a subadditive cocycle with $c(\gamma,-)\in L^\infty(X,m)$ for every $\gamma\in\Gamma$. 
We start with a couple of remarks about passing to finite index subgroups and dividing by finite kernels. 

Let $\Gamma'<\Gamma$ be a subgroup of finite index. The action of $\Gamma'$ on $(X,m)$ has at most $[\Gamma:\Gamma']$-many ergodic components 
permuted by the $\Gamma$-action. Let $c':\Gamma'\times X'\to\bbR_+$ be the restriction of $c$ to one of the $\Gamma'$-ergodic components $X'\subset X$.
If one shows that there is some function $\Phi:G_\infty\to\bbR_+$ so that for a.e. $x'\in X'$ the function $c'(-,x'):\Gamma'\to \bbR_+$ is asymptotically equivalent to $\Phi$,
then the same would apply to $c(-,x):\Gamma\to \bbR_+$ for a.e. $x\in X$.
Indeed choosing representatives $\gamma_1,\dots,\gamma_n$ for $\Gamma'\backslash \Gamma$ for every $\gamma\in\Gamma$ one can write
\[
	c(\gamma,x)= c(\gamma'\gamma_i,x)\le c(\gamma',\gamma_i.x)+\|c(\gamma_i,x)\|_\infty
\]
for some $\gamma'\in\Gamma'$; similarly 
\[
	c(\gamma',x)= c(\gamma\gamma_i^{-1},x)\le c(\gamma,\gamma_i^{-1}.x)+\|c(\gamma_i^{-1},x)\|_\infty.
\]
Hence $c(\gamma,x)$ is at uniformly bounded distance from $c(\gamma',\gamma_i^{\pm 1}.x)$, and therefore has the same
asymptotic behavior.

Let $N$ be a finite normal subgroup of $\Gamma$. Then $\Gamma_1:=\Gamma/N$ acts ergodically by p.m.p. transformations on $(X_1,m_1):=(X,m)/N$.
A subadditive cocycle $c:\Gamma\times X\to \bbR_+$ defines $c_1:\Gamma_1\times X_1\to\bbR_+$ by
\[
	c_1(\gamma_1, x_1):=\max \{ c(\gamma,x) \mid \pr(\gamma)=\gamma_1,\ \pr(x)=x_1\}.
\] 
Then $c_1:\Gamma_1\times X_1\to\bbR_+$ is a sub-additive cocycle, and it is within bounded distance from $c(\gamma,x)$.

Furthermore we note that conditions (i) and (ii) of Theorem~\ref{T:NilKingman} pass to
$c'$ and $c_1$ as above. (Condition (ii) for $c'$ is an easy exercise using subadditivity and innerness; the others are immediate.)
Hence in the context of Theorem~\ref{T:NilKingman} (and Theorem~\ref{T:REM}) 
we may assume without loss of generality that $\Gamma$ itself is finitely-generated, torsion-free, nilpotent group 
with torsion-free abelianization $\ab{\Gamma}$. 
Hereafter we shall make this assumption.

Let us define the function 
\[
	\overline{c}:\Gamma\to \bbR_+\qquad\textrm{by\ setting}\qquad \overline{c}(\gamma):=\int_X c(\gamma,x)\,dm(x).
\]
Observe that $\overline{c}$ is a sub-additive function, because sub-additivity of $c$ and $\Gamma$-invariance of $m$ imply
\[
	\overline{c}(\gamma_1\gamma_2)=\int_X c(\gamma_1\gamma_2,x)\,dm(x)\le 
	\int_X c(\gamma_1,\gamma_2x)\,dm(x)+\int_X c(\gamma_2,x)\,dm(x)=\overline{c}(\gamma_1)+\overline{c}(\gamma_2).
\]
Moreover one always has an upper linear estimate
\[
	\overline{c}(\gamma)\le K_1\cdot |\gamma|_S
	\qquad\textrm{with}\qquad
	K_1:=\max\{ \overline{c}(s) \mid s\in S\}.
\]
The definition of $\overline{c}$ requires only $L^1$-integrability of the functions $c(\gamma,x)$.
We note the point-wise bi-Lipschitz condition (i) passes to the average, and we have 
\begin{equation}\label{e:cbar-biLip}
	k\cdot |\gamma|_S\le \overline{c}(\gamma)\le K_1\cdot |\gamma|_S
\end{equation}
with constants $0<k\le K_1<+\infty$ and any $\gamma\in\Gamma$.
\begin{remark}
	It does not seem to be obvious why the condition of being \emph{inner} for the sub-additive cocycle $c:\Gamma\times X\to\bbR_+$ 
	(condition (ii) in Theorem~\ref{T:NilKingman}) should imply innerness for the average 
	sub-additive function $\overline{c}:\Gamma\to\bbR_+$.
	It will follow from our results that for an $L^\infty$-cocycle $c$ over an \emph{ergodic} $\Gamma$-action the average 
	$\overline{c}$ is indeed inner as it is asymptotically equivalent to a Carnot-Carath\'eodory function $\Phi$.
\end{remark}
We can now summarize the construction
\begin{prop}\label{P:construction}
	Let $\Gamma\acts (X,m)$ and $c:\Gamma\times X\to\bbR_+$	be a subadditive cocycle satisfying 
	condition (i) in Theorem~\ref{T:NilKingman}. Then:
	\begin{itemize}
		\item 
		The average function
		\[
			\overline{c}(\gamma):=\int_X c(\gamma,x)\,dm(x)
		\]
		is a subadditive function on $\Gamma$, satisfying bi-Lipschitz condition (\ref{e:cbar-biLip}).
		\item
		This subadditive function defines a subadditive, homogeneous $\phi:\ab{\Gamma}\otimes \bbR\to \bbR_+$, such that
		\[
			\lim_{n\to\infty}\frac{1}{n}\overline{c}(\gamma^n)=\phi(\ab{\gamma})\qquad (\gamma\in\Gamma).
		\]
		Moreover, for some $0<a\le b<\infty$ one has $a\cdot\|v\|\le \phi(v)\le b\cdot \|v\|$ 
		for all $v\in \ab{\Gamma}\otimes\bbR\cong \ab{\mfg}_\infty$.
		\item
		The Carnot-Carath\'eodory construction defines an asymmetric distance on $G_\infty$
		\[
			d_\phi:G_\infty\times G_\infty\to\bbR_+
		\] 
		that is right-invariant, homogeneous, and bi-Lipschitz to $d_\infty$ as in (\ref{e:dphi-properties}).
		We denote
		\[
			\Phi(g):=d_\phi(e,g)\qquad (g\in G_\infty).
		\] 
	\end{itemize}
\end{prop}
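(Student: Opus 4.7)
The plan is to verify the proposition by directly assembling the Lemmas already established in this section; the statement is really a summary of the construction of $\phi$, $\Phi$ and $d_\phi$ from $c$, and the content of the proof is bookkeeping rather than any genuinely new argument.

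For the first bullet, subadditivity of $\overline{c}$ has already been displayed just above the proposition using Fubini and the $\Gamma$-invariance of $m$, so nothing more is needed there. The bi-Lipschitz inequalities (\ref{e:cbar-biLip}) are obtained by integrating the pointwise bounds of Theorem~\ref{T:NilKingman}(i) with respect to $m$ and using that $m(X)=1$; the upper constant can then be improved to $K_1=\max_{s\in S}\overline{c}(s)$ by subadditivity of $\overline{c}$ along the generating set $S$.

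For the second bullet, I would apply Lemma~\ref{L:sub-to-quo} to the short exact sequence
\[
    1\ \longrightarrow\ [\Gamma,\Gamma]\ \longrightarrow\ \Gamma\ \longrightarrow\ \ab{\Gamma}\ \longrightarrow\ 1
\]
with $F=\overline{c}$; this produces a subadditive $f:\ab{\Gamma}\to\bbR_+$, which inherits a bi-Lipschitz estimate from (\ref{e:cbar-biLip}) since $|\ab{\gamma}|_{\ab{S}}$ and $\inf\{|\gamma'|_S:\ab{\gamma'}=\ab{\gamma}\}$ are comparable on $\bbZ^d$. After the reductions of \S\ref{sub:cocycle2cc}, $\ab{\Gamma}\cong\bbZ^d$ is a lattice in the real vector space $\ab{\Gamma}\otimes\bbR\cong\ab{G}\cong\ab{\mfg}\cong\ab{\mfg}_\infty$, so Lemma~\ref{L:sub-on-Zd} applies and produces a unique homogeneous subadditive $\phi$ on $\ab{\Gamma}\otimes\bbR$ asymptotic to $f$, with $a\cdot\|v\|\le \phi(v)\le b\cdot\|v\|$ by the second clause of Lemma~\ref{L:sub-on-Zd}. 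The limit identity $\lim_n n^{-1}\overline{c}(\gamma^n)=\phi(\ab{\gamma})$ is exactly Lemma~\ref{L:turning-to-horizontal} applied to $F=\overline{c}$.

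For the third bullet, the asymmetric norm $\phi$ on $\ab{\mfg}_\infty$ satisfies the hypotheses (\ref{e:asym-norm}) of the Carnot-Carath\'eodory construction of \S\ref{sub:CC-construction}, so feeding $\phi$ into the recipe $d_\phi(g_1,g_2)=\inf\{\len_\phi(\xi)\}$ of (\ref{e:dphi}) produces the claimed asymmetric distance $d_\phi$ on $G_\infty$; the list of properties (right-invariance, homogeneity, the triangle inequality, and bi-Lipschitz equivalence to $d_\infty$) is then literally (\ref{e:dphi-properties}). Defining $\Phi(g):=d_\phi(e,g)$ completes the construction.

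Since the proposition is a summary, there is no real obstacle; the only place where care is needed is the chain of canonical identifications $\ab{\Gamma}\otimes\bbR\cong \ab{G}\cong\ab{\mfg}\cong \ab{\mfg}_\infty \cong \frak{v}_1$, which is what allows $\phi$ constructed on the abelianization of the lattice to be used as the horizontal norm in the Carnot-Carath\'eodory construction on $G_\infty$. This identification is recorded at the beginning of \S\ref{sub:CC-construction} and is used implicitly throughout \S\ref{sub:CC-construction4fun}, so I would just cite it.
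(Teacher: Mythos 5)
Your proposal is correct and takes essentially the same approach as the paper: the proposition is indeed stated without a separate proof environment precisely because it is a recapitulation of Construction~\ref{cons:CC}, and you correctly assemble it from the subadditivity computation for $\overline{c}$ displayed just before the proposition, Lemma~\ref{L:sub-to-quo} applied to $1\to[\Gamma,\Gamma]\to\Gamma\to\ab{\Gamma}\to 1$, Lemma~\ref{L:sub-on-Zd}, Lemma~\ref{L:turning-to-horizontal}, and the properties (\ref{e:dphi-properties}) of the Carnot--Carath\'eodory construction. The one step you rightly flag as needing a word — that the bi-Lipschitz bounds on $\overline{c}$ descend to $f$ on $\ab{\Gamma}$, via comparability of $|\ab{\gamma}|_{\ab{S}}$ with $\inf\{|\gamma'|_S:\ab{\gamma'}=\ab{\gamma}\}$ — is left implicit in the paper, so spelling it out is a small but genuine clarification rather than a deviation.
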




\section{Preparation for the main proofs} 
\label{sec:prep}

In this section we prepare two tools for the proof of the main results.
The first tool is a purely geometric fact that allows one to approximate 
an admissible curve in the asymptotic cone $G_\infty$ of $\Gamma$
by rescaled sequences of the form $T_k^n\cdots T_2^nT_1^n$ in $\Gamma$; we call such sequences \emph{polygonal paths}.
The second tool is an ergodic theorem  for a sub-additive
cocycle \emph{along polygonal paths} over a general ergodic, p.m.p. action of a nilpotent group.

\medskip

\subsection{Approximating curves in $G_\infty$ by polygonal paths in $\Gamma$} 
\label{sub:subsection_name}\hfill{}\\
This subsection concerns purely geometric aspects of the convergence of $\Gamma$ to its asymptotic cone $G_\infty$
(and is unrelated to the action $\Gamma\acts (X,m)$ and the cocycle $c:\Gamma\times X\to \bbR_+$).

As before, $\Gamma$ is a finitely generated, torsion-free, nilpotent group with torsion-free abelianization, $d$ is a right-invariant word metric on $\Gamma$,
$(G_\infty,d_\infty)$ is the asymptotic cone, and  
\[
	\scl{t}{-}:\Gamma\overto{} G_\infty\qquad (t>0)
\]
are the maps defined in (\ref{e:sclt}) that realize the Gromov-Hausdorff convergence 
\[
	(\Gamma,\frac{1}{t}\cdot d,e_\Gamma)\ \overto{}\ (G_\infty,d_\infty,e).
\]
We also fix a (possibly) asymmetric norm 
\[
	\phi:\ab{\mfg}_\infty\ \overto{}\ \bbR_+
\]
satisfying (\ref{e:asym-norm}) and use it to associate length $\len_\phi(\xi)$ to admissible curves $\xi:[0,1]\to G_\infty$.
We denote the balls in $G_\infty$ by
\[
	\Ball(g,\epsilon):=\{g'\in G_\infty \mid d_\infty(g,g')<\epsilon\}.
\]
\begin{prop}[Approximation of curves by polygonal paths]\label{P:curve-to-polygon}\hfill{}\\
	Given a Lipschitz curve $\xi:[0,1]\to G_\infty$ with $\xi(0)=e$, 
	and $\epsilon>0$ one can find $k,p,n_0\in\bbN$, $T_1,\dots,T_k\in\Gamma$
	so that for $n\ge n_0$ one has:
	\[
		\sum_{j=1}^k\ d_\infty\left(\ssc{np}{T_j^n\cdots T_2^nT_1^n},\,\xi(\frac{j}{k})\right)<\epsilon,
	\]
	and
	\[
		\left|\frac{1}{p}\cdot\left(\phi(\ab{T}_k)+\dots+\phi(\ab{T}_1)\right) - \len_\phi(\xi)\right|<\epsilon.
	\]
\end{prop}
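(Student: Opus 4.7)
The argument has three stages. First, approximate $\xi$ by a piecewise horizontal curve $\tilde{\xi}_k$ in $G_\infty$ whose $k$ legs move with constant horizontal velocities $\bar{v}_1, \ldots, \bar{v}_k \in \frak{v}_1$. Second, for large $p$, find lattice vectors $\lambda_j \in \ab{\Gamma}\subset\frak{v}_1$ with $\lambda_j/p$ close to $\bar{v}_j/k$. Third, lift the $\lambda_j$ to group elements $T_j \in \Gamma$ and use Lemmas~\ref{L:flattening-of-powers} and~\ref{L:multiplication} to identify $\lim_{n \to \infty} \scl{np}{T_j^n \cdots T_1^n}$ with the product $\exp_\infty(\lambda_j/p) \cdots \exp_\infty(\lambda_1/p)$, which by the first two stages is close to $\xi(j/k)$.

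\textbf{Stage 1 (piecewise horizontal approximation in $G_\infty$).} Since $\xi$ is Lipschitz in $d_\infty$ it is $d_\infty$-rectifiable; by the results of Pansu already cited, $\xi$ is absolutely continuous, differentiable a.e., its right-invariant velocity $\xi'(s)\xi(s)^{-1}$ lies in $\frak{v}_1$ almost everywhere, and $\len_\phi(\xi)=\int_0^1 \phi(\xi'(s)\xi(s)^{-1})\,ds$. Partition $[0,1]$ into $k$ equal subintervals $I_j=[(j-1)/k,j/k]$ and define the average horizontal velocity
\[
 \bar{v}_j \defq k \int_{I_j} \xi'(s)\xi(s)^{-1}\, ds\ \in\ \frak{v}_1.
\]
Let $\tilde{\xi}_k$ be the unique continuous piecewise horizontal curve with $\tilde{\xi}_k(0) = e$ and constant right-invariant velocity $\bar{v}_j$ on $I_j$, so that
\[
 \tilde{\xi}_k(j/k) = \exp_\infty(\bar{v}_j/k) \cdots \exp_\infty(\bar{v}_1/k).
\]
A standard approximation argument (Lebesgue differentiation for $s\mapsto\xi'(s)\xi(s)^{-1}$ combined with the Lipschitz control on $\xi$ and the BCH formula in the nilpotent group $G_\infty$) shows that as $k \to \infty$ one has $\max_j d_\infty(\tilde{\xi}_k(j/k),\xi(j/k)) \to 0$ and $\frac{1}{k}\sum_{j=1}^k \phi(\bar{v}_j) \to \len_\phi(\xi)$.

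\textbf{Stages 2--3 (lattice approximation and lift).} The composition $\pi_\infty\circ L\circ \log: \Gamma \to \frak{v}_1$ factors through the abelianization and identifies $\ab{\Gamma}$ with a cocompact lattice in $\frak{v}_1\cong\ab{\mfg}_\infty$; denote its covering radius by $C<\infty$. For a positive integer $p$ and each $j$, choose $\lambda_j \in \ab{\Gamma}$ with $\|\lambda_j-(p/k)\bar{v}_j\|\le C$; then $\|\lambda_j/p-\bar{v}_j/k\|\le C/p$, and the bi-Lipschitz bound (\ref{e:asym-norm}) yields $|\phi(\lambda_j)/p-\phi(\bar{v}_j)/k|\le bC/p$. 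Lift each $\lambda_j$ to some $T_j\in\Gamma$ with $\ab{T}_j=\lambda_j$. Repeating the computation in the proof of Lemma~\ref{L:flattening-of-powers} with scaling factor $np$ in place of $n$ shows
\[
 \lim_{n\to\infty}\scl{np}{T_j^n}=\exp_\infty\bigl(\pi_\infty L\log T_j/p\bigr)=\exp_\infty(\lambda_j/p),
\]
and an inductive application of Lemma~\ref{L:multiplication} gives
\[
 \lim_{n\to\infty}\scl{np}{T_j^n\cdots T_1^n}=\exp_\infty(\lambda_j/p)\cdots\exp_\infty(\lambda_1/p).
\]
Since there are only $k$ indices involved, the convergence in $n$ is uniform in $j$. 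Choosing first $k$ (controlling Stage 1), then $p$ (controlling Stage 2 and the length estimate), and finally $n_0$ (controlling the convergence as $n\to\infty$) sufficiently large, both inequalities claimed in the proposition follow.

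\textbf{Main obstacle.} The technically delicate point is Stage 1, where one must verify that the Euler-type horizontal approximation $\tilde{\xi}_k$ converges uniformly to $\xi$ in $d_\infty$. This is essentially a Gronwall-type estimate in the Carnot group, combining the Lipschitz bound on $\xi$ with Lebesgue differentiation of its a.e.\ horizontal derivative and the BCH formula to control the accumulation of step-by-step errors. Stages 2 and 3, by contrast, reduce to simultaneous Diophantine approximation of finitely many vectors by the lattice $\ab{\Gamma}\subset\frak{v}_1$ together with a direct application of the preparatory lemmas.
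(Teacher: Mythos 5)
Your overall three‑stage plan is sound, and Stages 2--3 (covering‑radius approximation in $\ab{\Gamma}\subset\frak{v}_1$, lift to $T_j$, flatten via Lemmas~\ref{L:flattening-of-powers} and~\ref{L:multiplication}) are correct and even a bit slicker than the paper's Lemma~\ref{L:angular-approx}: picking all $\lambda_j$ at once against a single scale $p$ avoids the paper's extra step of passing from individual $p_j$'s to a common multiple $p=p_1\cdots p_k$.

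The genuine gap is precisely where you flag it: Stage 1. Your ``open‑loop'' scheme defines $\bar v_j$ as the average horizontal velocity on $I_j$ and then multiplies out $\exp_\infty(\bar v_j/k)\cdots\exp_\infty(\bar v_1/k)$ without any feedback from accumulated drift. To bound $d_\infty(h_j\cdots h_1,\xi(j/k))$ you must telescope, and because $d_\infty$ is only \emph{right}‑invariant, the telescoping forces you to compare $a h_\ell$ with $a g_\ell$ for various left factors $a$. The crude triangle inequality alone is not enough (it leaves an $O(L)$ error per step that does not vanish as $k\to\infty$); you need the additional fact that left translation by elements of a fixed bounded set is uniformly bi‑Lipschitz for $d_\infty$, and you need to quote the quadratic estimate $d_\infty(g,\exp_\infty\pi_\infty\log_\infty g)\le C_1 d_\infty(e,g)^2$ (Pansu's Lemme (18), cited in the paper). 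Your reference to ``Gronwall $+$ BCH'' points in the right direction but does not supply these two ingredients, and without them the uniform‑in‑$j$ convergence of $\tilde\xi_k$ to $\xi$ is not established.

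The paper's proof sidesteps this entirely by defining the $v_j$ \emph{inductively}: $v_j$ is the horizontal part of the group element carrying the current position $h_{j-1}\cdots h_1$ to the target $\xi(j/k)$. Then right‑invariance of $d_\infty$ cancels the common right factor $h_{j-1}\cdots h_1$ exactly, and Pansu's quadratic estimate gives $d_\infty(h_j\cdots h_1,\xi(j/k))\le C_1\,d_\infty(h_{j-1}\cdots h_1,\xi(j/k))^2$ directly, with no left‑translation distortion to control and no error accumulation (each new $v_j$ already corrects for all previous drift). If you want to keep the average‑velocity picture, you should add a lemma on uniform bi‑Lipschitzness of left translations on bounded sets; otherwise the cleanest fix is to replace your Stage~1 with the paper's ``closed‑loop'' inductive choice of $v_j$, which makes the whole error analysis a one‑line application of the quadratic estimate.
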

We emphasize the order of the main quantifiers: 
the elements $T_1,\dots,T_k$ and $p\in\bbN$ depend only on the required accuracy $\epsilon>0$ (and of course the curve $\xi$), 
and provide $\epsilon$-good approximation at \emph{all sufficiently large scales}. 

We shall need this proposition (in combination with Theorem~\ref{T:erg-poly}) in two cases:
\begin{itemize}
	\item 
	In \S~\ref{sub:upperbound} we choose $\xi$ to be a $\phi$-geodesic connecting $e$ to some $g$.
	In this case $\xi$ is a smooth admissible curve
	and we are interested in the inequality 
	\[
		\frac{1}{p}\cdot\left(\phi(\ab{T}_k)+\dots+\phi(\ab{T}_1)\right)\le \len_\phi(\xi)+\epsilon=\Phi(g)+\epsilon
	\]
	while $d_\infty(\ssc{np}{T_k^n\cdots T_2^nT_1^n},g)<\epsilon$. 
	\item 
	In \S~\ref{sub:lowerbound} we get a Lipschitz curve $\xi$ connecting $e$ to some $g$. 
	In this case we are interested in the inequality
	\[
		\frac{1}{p}\cdot\left(\phi(\ab{T}_k)+\dots+\phi(\ab{T}_1)\right)\ge \len_\phi(\xi)-\epsilon\ge \Phi(g)-\epsilon
	\]
	while requiring  
	\[
		\sum_{j=1}^k\ d_\infty\left(\ssc{np}{T_k^n\cdots T_2^nT_1^n},\xi(\frac{j}{k})\right)<\epsilon
	\]
	which is stronger than just $d_\infty(\ssc{np}{T_k^n\cdots T_2^nT_1^n},g)<\epsilon$.
\end{itemize}

\medskip

\begin{proof}[Proof of Proposition~\ref{P:curve-to-polygon}]\hfill{}\\
	First we work in $G_\infty$. Our goal is to find $k\in\bbN$ and horizontal vectors 
	\[
		v_1,\dots,v_k\in \frak{v}_1\subset \mfg_\infty
	\]
	so that, denoting $h_j:=\exp_\infty(v_j)$ one has
	\begin{equation}\label{e:smooth-poly}
		\sum_{j=1}^k\ d_\infty(h_j\cdots h_1,\,\xi(\frac{j}{k}))<\half\epsilon,
		\qquad |\sum_{j=1}^k \phi(v_j)-\len_\phi(\xi)|<\half\epsilon.
	\end{equation}
	For a fixed $k\in\bbN$, that we will take to be sufficiently large, we define $v_1,\dots,v_k$ inductively as follows:
	set 
	\[
		v_1:=\pi_\infty\circ \log_\infty(\xi(\frac{1}{k})),\qquad h_1:=\exp_\infty(v_1).
	\]
	Assuming $v_1,\dots,v_{j-1}$ were chosen, set
	\[
		v_{j}:=\pi_\infty\circ \log_\infty(\xi(\frac{1}{k})(h_{j-1}\cdots h_1\xi(0))^{-1}),\qquad h_j:=\exp_\infty(v_j).
	\]
	Here $\pi_\infty:\mfg_\infty\to \frak{v}_1$ is the linear projection corresponding to the decomposition 
	$\mfg_\infty=\oplus_{i=1}^r \frak{v}_i$.
	
	Let us now show that by choosing $k$ large enough we can guarantee (\ref{e:smooth-poly}).
	To this end we need the fact (\cite[Lemme (18)]{Pansu}) that in the unit ball in $G_\infty$
	the "horizontal component" gives an approximation with at most quadratic error. 
	More precisely, there is a constant $C_1$ so that for all $g\in \Ball(e,1)$:
	\[
		d_\infty(g,\exp_\infty\circ \pi_\infty\circ \log_\infty (g))\le C_1\cdot d_\infty(e,g)^2.
	\]
	Hence for large $k$ one has for $j=1,\dots,k$:
	\[
		\begin{split}
			&d_\infty\left(h_j\cdots h_1,\,\xi(\frac{j}{k})\right)
			\le C_1\cdot d_\infty\left(h_{j-1}\cdots h_1,\xi(\frac{j}{k})\right)^2\\
			&\le C_1\cdot \left(d_\infty\left(h_{j-1}\cdots h_1,\xi(\frac{j-1}{k})\right)
				+d_\infty\left(\xi(\frac{j-1}{k}),\xi(\frac{j}{k})\right)\right)^2
			\le  C_2\cdot \frac{1}{k^2}
		\end{split}
	\]
	for some $C_2$	depending on $C_1$ and the Lipschitz constant of $\xi$.
	Hence for all $k$ large enough
	\[
		\sum_{j=1}^k\ d_\infty\left(h_j\cdots h_1,\,\xi(\frac{j}{k})\right)
		<C_2\cdot k\cdot \frac{1}{k^2}=\frac{C_2}{k}<\half\epsilon.
	\]
	The second fact that we want to use is that a Lipschitz curve $\xi:[0,1]\to G_\infty$ is rectifiable.
	Therefore 
	\[
		\len_\phi(\xi)=\lim_{k\to\infty} \sum_{j=1}^k d_\phi\left(\xi(\frac{j-1}{k}), \xi(\frac{j}{k})\right).
	\]
	One also has a constant $C$ so that
	\[
		\left|d_\phi(g,g')-\phi\circ\pi_\infty\circ \log_\infty(g'g^{-1})\right|\le C\cdot d_\infty(g,g')^2
	\]
	whenever $g'\in \Ball(g,1)$. Thus for all sufficiently large $k$ and for each $j=1,\dots,k$, we have
	\[
		\begin{split}
			&\left|d_\phi\left(\xi(\frac{j-1}{k}), \xi(\frac{j}{k})\right)-\phi(v_j)\right|
					\le  \left|d_\phi\left(h_{j-1}\cdots h_1, \xi(\frac{j}{k})\right)-\phi(v_j)\right|\\
			&\qquad		+ d_\phi\left(h_{j-1}\cdots h_1,\xi(\frac{j-1}{k})\right)
					\le C_3 \cdot \frac{1}{k^2}
		\end{split}
	\]
	and the second inequality in (\ref{e:smooth-poly}) follows.
	
	We have now found $k\in\bbN$ and horizontal vectors $v_1,\dots,v_k\in\frak{v}_1$ satisfying (\ref{e:smooth-poly}),
	and need to find $T_1,\dots, T_k\in\Gamma$, $p\in \bbN$, and $n_0>0$ as in the Proposition.
	We need the following
	\begin{lemma}\label{L:angular-approx}\hfill{}\\
		Given a horizontal vector $v\in \frak{v}_1 < \mfg_\infty$ and $\epsilon'>0$ there exist $\tau\in\Gamma$, $p\in\bbN$ 
		and $n_0$ so that  
		\[
			|\frac{1}{p}\phi(\ab{\tau})-\phi(v)|<\epsilon',\qquad 
			d_\infty(\ssc{np}{\tau^n},\exp_\infty(v))<\epsilon'\qquad (n>n_0)
		\]
		where $\exp_\infty:\mfg_\infty\to G_\infty$ is the exponential map on $G_\infty$.
	\end{lemma}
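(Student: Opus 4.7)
\textbf{Proof plan for Lemma~\ref{L:angular-approx}.} The plan is to first reduce the problem to a rational-approximation statement in the vector space $\frak{v}_1 \cong \ab{\mfg}_\infty \cong \ab{\Gamma}\otimes\bbR$, and then verify both conclusions of the lemma by a direct unwinding of definitions.

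First, I would use that $\ab{\Gamma}$ is a lattice in the finite dimensional real vector space $\frak{v}_1$ (via the chain of identifications $\ab{\Gamma} \otimes \bbR \cong \ab{G} \cong \ab{\mfg} \cong \ab{\mfg}_\infty \cong \frak{v}_1$ explained in \S\ref{sub:the_graded_lie_algebra_group} and \S\ref{sub:CC-construction4fun}). Under this identification, $\ab{\tau}$ is exactly $L\circ\pi_1\circ\log(\tau)=\pi_\infty\circ L\circ\log(\tau)\in\frak{v}_1$. Since $\bbQ$-multiples of lattice points are dense in $\frak{v}_1$, for any $\eta>0$ I can choose $\tau\in\Gamma$ and $p\in\bbN$ with $\|\ab{\tau}/p-v\|<\eta$, where $\|-\|$ is the fixed reference Euclidean norm on $\frak{v}_1$. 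I will pin down $\eta$ at the end in terms of $\epsilon'$, the bi-Lipschitz constant $b$ of $\phi$, and the modulus of continuity of $\exp_\infty$ on a bounded set around $v$.

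Second, to verify the length estimate, I use that $\phi$ is positively homogeneous of degree one and bi-Lipschitz to the chosen Euclidean norm (see (\ref{e:asym-norm})). Homogeneity gives $\phi(\ab{\tau})/p=\phi(\ab{\tau}/p)$. The sub-additivity $\phi(u)\le\phi(v)+\phi(u-v)$ and the upper bound $\phi(w)\le b\|w\|$ (applied to both $w=u-v$ and $w=v-u$) yield $|\phi(u)-\phi(v)|\le b\|u-v\|$. Thus $|\phi(\ab{\tau})/p-\phi(v)|\le b\eta$, which is less than $\epsilon'$ for $\eta<\epsilon'/b$.

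Third, for the Gromov--Hausdorff (scale) estimate I unwind the definition (\ref{e:sclt}) as in the proof of Lemma~\ref{L:flattening-of-powers}. Writing $X=\log(\tau)=X_1+\cdots+X_r$ with $X_i\in V_i$, and using $\log(\tau^n)=nX$ together with $\sigma_{1/(np)}(nX)=\sum_{i=1}^r n^{1-i}p^{-i}X_i$, I get
\[
\ssc{np}{\tau^n}=\exp_\infty\!\Bigl(p^{-1}L(X_1)+\sum_{i=2}^r n^{1-i}p^{-i}L(X_i)\Bigr).
\]
The tail terms vanish as $n\to\infty$, so the expression converges in $G_\infty$ to $\exp_\infty(p^{-1}L(X_1))=\exp_\infty(\ab{\tau}/p)$. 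By continuity of $\exp_\infty$ and the fact that $d_\infty$ defines the usual topology on $G_\infty$, both $d_\infty\bigl(\exp_\infty(\ab{\tau}/p),\exp_\infty(v)\bigr)$ and the error from the tail can be made less than $\epsilon'/2$ by taking $\eta$ small and $n$ larger than some $n_0$ (depending on $\tau,p,\epsilon'$).

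The main obstacle is not conceptual but notational: one has to be scrupulous about the four identifications of $\ab{\Gamma}\otimes\bbR$ with $\frak{v}_1$ so that the single vector $\ab{\tau}/p$ plays consistently in the length computation (where it is fed to $\phi$), in the rational approximation of $v$ (where it is compared in $\frak{v}_1$), and in the computation of the limit of $\ssc{np}{\tau^n}$. Once these identifications are fixed, the argument above works and one picks $\eta$ and $n_0$ at the end to satisfy both inequalities simultaneously.
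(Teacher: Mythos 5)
Your proposal is correct and follows essentially the same route as the paper's proof: rational approximation of $v$ by $\ab{\tau}/p$ with $\tau\in\Gamma$, a Lipschitz estimate for $\phi$ to handle the length condition, and the computation that $\ssc{np}{\tau^n}\to\exp_\infty(\ab{\tau}/p)$ as in Lemma~\ref{L:flattening-of-powers}. The one small difference is that the paper invokes density of $\ssc{p}{\Gamma}$ in $G_\infty$ and then projects to the horizontal layer, whereas you go directly for density of $\bbQ\cdot\ab{\Gamma}$ in $\frak{v}_1$ -- a minor streamlining of the same argument.
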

	\begin{proof}
		Since $\ssc{p}{\Gamma}$ becomes denser and denser in $G_\infty$ as $p\to\infty$, 
		one can find $p\in\bbN$ and $\gamma\in\Gamma$ so that
		$\ssc{p}{\gamma}$ is close to $\exp_\infty(v)$.
		Recall that 
		\[
			\ssc{p}{\gamma}=\exp_\infty\left(\frac{1}{p}\cdot L\circ\pi_1\circ\log(\gamma)
				+\frac{1}{p^2}\cdot L\circ\pi_2\circ\log(\gamma)+\dots
				+\frac{1}{p^r}\cdot L\circ\pi_r\circ\log(\gamma)\right)
		\] 
		where $\pi_j:\mfg\to V_j=L^{-1}(\frak{v}_j)$ are the linear projections.
		Since $v\in\frak{v}_1=L(\pi_1(\mfg))$, it follows that $p^{-1}\cdot L\circ\pi_1\circ\log(\gamma)$ 
		and $v$ are close.
		Hence we may choose $p$ and $\gamma$ (to be called $\tau$)
		so that  
		\[
			d_\infty\left(\exp_\infty \left(\frac{1}{p}\cdot L\circ \pi_1\circ\log(\gamma)\right),\,
			\exp_\infty(v)\right)<\epsilon',
			\qquad
			\left|\phi(\frac{1}{p}\cdot L\circ \pi_1\circ\log(\gamma))-\phi(v)\right|<\epsilon'. 
		\] 
		Now considering powers $\ssc{n}{\gamma^n}$ we are done by applying Lemma~\ref{L:flattening-of-powers}.
		This proves Lemma~\ref{L:angular-approx}.
	\end{proof}
	
	Choose $\epsilon'\in(0,\epsilon/2k)$ small enough to ensure that whenever $h_1',\dots,h_k'\in G_\infty$ 
	are $\epsilon'$-close to $h_1,\dots,h_k$, respectively, one has 
	\[
		\sum_{j=1}^k\ d_\infty\left(h'_j\cdots h'_2h'_1,h_j\cdots h_2h_1\right)<\half\epsilon.
	\]
	Let us now apply Lemma~\ref{L:angular-approx} with $\epsilon'>0$ as above
	to obtain elements $\tau_1,\dots,\tau_k$ and $p_1,\dots,p_k\in\bbN$
	so that the pairs $(\tau_j, p_j)$ satisfy
	\[
		|\frac{1}{p_j}\phi(\ab{\tau}_j)-\phi(v_j)|<\epsilon'<\frac{\epsilon}{2k},
			\qquad
		d_\infty\left(\ssc{np_j}{\tau_j^n},h_j\right)<\epsilon'\qquad (j=1,\dots,k).
	\]
	Replacing a pair $(\tau_j,p_j)$ by $(\tau_j^q,q\cdot p_j)$ with any $q\in\bbN$, the above inequalities 
	clearly remain valid.
	So taking $p:=p_1\cdots p_k$ and replacing $(\tau_j, p_j)$ by $(T_j:=\tau_j^{p/p_j}, p)$ we get
	elements $T_1,\dots,T_k\in \Gamma$ so that for $n\gg1$
	\[
			d_\infty\left(\ssc{np}{T_j^n},h_j\right)<\epsilon'\qquad
			\textrm{and}\qquad |\frac{1}{p}\phi(\ab{T}_j)-\phi(v_j)|<\epsilon'<\frac{\epsilon}{2k}
			\qquad (j=1,\dots,k).
	\]
	In view of Lemma~\ref{L:multiplication} we know that for every $j=1,\dots,k$
	\[
		d_\infty\left(\ssc{np}{(T_j^n\cdots T_1^n)},(\ssc{np}{T_j^n})\cdots (\ssc{np}{T_1^n})\right)\to 0.
	\]
	Thus for all $n$ large enough, we have
	\[
		\sum_{j=1}^k\ d_\infty\left(\ssc{np}{T_j^n\cdots T_1^n},h_j\cdots h_1\right)<\half\epsilon,
	\]
	while 
	\[
		\left|\frac{1}{p}\cdot\sum_{j=1}^k \phi(\ab{T}_j) -\sum_{j=1}^k \phi(v_j)\right|<\half\epsilon.
	\]
	Combined with (\ref{e:smooth-poly}) this establishes the required inequalities. 
	This completes the proof of Proposition~\ref{P:curve-to-polygon}.
\end{proof}

\medskip

\medskip

\subsection{An Ergodic Theorem along polygonal paths} 
\label{sub:ergodic}\hfill{}\\
The goal of this subsection is to prove the following result that might have an independent interest.

\begin{theorem}[Ergodic theorem along polygonal paths]\label{T:erg-poly}\hfill{}\\
	Let $\Gamma$ be a finitely generated torsion-free nilpotent group with torsion-free abelianization, $\Gamma\acts (X,m)$ an ergodic p.m.p. action, $c:\Gamma\times X\to \bbR_+$
	a measurable, non-negative, subadditive cocycle with $c(\gamma,-)\in L^\infty(X,m)$ for every $\gamma\in\Gamma$,
	and $\overline{c}$ and $\phi$ as above.
	Then for any $T_1,\dots,T_k\in \Gamma$ one has $m$-a.e. and $L^1(X,m)$-convergence
	\[
		\lim_{n\to \infty} \frac{1}{n}\cdot c(T_j^n,\, T_{j-1}^n \cdots T_1^n x)=\phi(\ab{T}_j)
	\]
	for each $j=1,\dots,k$, and consequently
	\[
		\lim_{n\to \infty} \frac{1}{n}
			\left(c(T_k^n,\, T_{k-1}^n \cdots T_1^n x)+\dots+c(T_2^n,\, T_{1}^nx)+c(T_1^n,x)\right)
			=\phi(\ab{T}_k)+\cdots +\phi(\ab{T}_1).
	\]
	The $L^1$ convergence holds under a weaker assumption: $c(\gamma,-)\in L^1(X,m)$.
\end{theorem}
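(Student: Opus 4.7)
The plan is to first establish the single-element case $\tfrac{1}{n}c(T^n,x)\to\phi(\ab{T})$ for $m$-a.e.\ $x$ and each $T\in\Gamma$, and then bootstrap to the moving-orbit statement.

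For the single-element case, I apply Kingman's subadditive ergodic theorem to the cocycle $n\mapsto c(T^n,\cdot)$ over the $\bbZ$-action generated by $T$: this yields a $T$-invariant a.e.\ (and $L^1$) limit $\psi_T(x):=\lim_{n\to\infty} \tfrac{1}{n}c(T^n,x)$. The crucial step is to promote $T$-invariance of $\psi_T$ to $\Gamma$-invariance, after which ergodicity will force $\psi_T$ to be constant. For any $\gamma\in\Gamma$, the conjugates $\gamma_n:=T^n\gamma T^{-n}$ satisfy $\ssc{n}{\gamma_n}\to e$ in $G_\infty$: since $\ssc{n}{\gamma}\to e$ (as $\gamma$ is fixed) while $\ssc{n}{T^n}$ converges to a horizontal element $h_T$ by Lemma~\ref{L:flattening-of-powers}, Lemma~\ref{L:multiplication} gives $\ssc{n}{\gamma_n}\to h_T\cdot e\cdot h_T^{-1}=e$, hence $|\gamma_n|_S=o(n)$. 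Using the identity $T^n\gamma=\gamma_n T^n$, subadditivity of $c$, and the bound $c(\gamma',z)\le K|\gamma'|_S$ (which follows from $L^\infty$-boundedness of $c$ on the generating set), we conclude $|c(T^n,\gamma.x)-c(T^n,x)|=o(n)$ a.e., so $\psi_T(\gamma.x)=\psi_T(x)$ a.e. By ergodicity $\psi_T$ is constant, with value $\int\psi_T\,dm=\lim_{n\to\infty}\tfrac{1}{n}\overline{c}(T^n)=\phi(\ab{T})$ by Lemma~\ref{L:turning-to-horizontal}.

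For the polygonal statement, the $L^1$-convergence is immediate: writing $S_n:=T_{j-1}^n\cdots T_1^n$, this element preserves $m$, so $\tfrac{1}{n}c(T_j^n,S_nx)$ has the same distribution under $m$ as $\tfrac{1}{n}c(T_j^n,x)$; the latter is uniformly bounded (by $K|T_j|_S$) and converges a.e., hence in $L^1$, to $\phi(\ab{T}_j)$. The $m$-a.e.\ convergence along the moving orbit $y_n:=S_nx$ is the main technical obstacle: although the Kingman good set $G\subset X$ for the single-element $T_j$-cocycle has full measure, and a countable-intersection argument places $S_n x\in G$ for every $n$ and $m$-a.e.\ $x$, this only guarantees $\tfrac{1}{m}c(T_j^m,S_n x)\to\phi(\ab{T}_j)$ for each fixed $n$ as $m\to\infty$, not the diagonal $m=n$. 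To overcome this, my plan is to combine (i)~the uniform $L^\infty$-bound $\tfrac{1}{n}c(T_j^n,\cdot)\le K|T_j|_S$, (ii)~an Egorov-type refinement giving uniform Kingman convergence on sets $Y_\epsilon\subset X$ of measure $>1-\epsilon$, and (iii)~a pointwise ergodic theorem for polynomial orbits in nilpotent groups (in the spirit of Bergelson--Leibman) showing that the polynomial orbit $\{S_nx\}_n$ spends asymptotic density-one time in $Y_\epsilon$ for $m$-a.e.\ $x$. The subadditive modulus-of-continuity bound $|c(T_j^{n+1},y)-c(T_j^n,y)|\le K|T_j|_S$, combined with the density-one visits to $Y_\epsilon$, should then upgrade density-one to full a.e.\ convergence along the moving orbit; this bootstrap is the most delicate aspect of the argument, and the statement for the full sum then follows by summing over $j$.
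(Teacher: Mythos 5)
Your single-element step (Kingman plus promotion of $T$-invariance to $\Gamma$-invariance via the conjugates $T^n\gamma T^{-n}$ and ergodicity) is exactly the paper's argument, and your $L^1$ argument for the polygonal case --- using that $S_n$ preserves $m$, so $\frac{1}{n}c(T_j^n,S_n\cdot)$ is equidistributed with $\frac{1}{n}c(T_j^n,\cdot)$, and that the latter converges in $L^1$ by Kingman --- is actually cleaner than the paper's (which compares with an average of perturbed paths). You also correctly diagnose that the crux is the moving-target problem for the a.e.\ statement.

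The proposed fix for the a.e.\ case, however, has a genuine gap at step (iii). You invoke a ``pointwise ergodic theorem for polynomial orbits in nilpotent groups (in the spirit of Bergelson--Leibman)'' to obtain density-one visits of the polynomial orbit $\{S_n x\}$ to the Egorov set $Y_\epsilon$. Bergelson--Leibman-type results give multiple recurrence and \emph{mean} ($L^2$) convergence, not pointwise statements; pointwise ergodic theorems along polynomial sequences in nilpotent groups, over an arbitrary ergodic p.m.p.\ action and for merely bounded observables, are not a tool one can take off the shelf in the required generality, and $S_n=T_{j-1}^n\cdots T_1^n$ is a genuinely nonlinear orbit once $\Gamma$ is noncommutative. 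The paper avoids this input entirely: it applies ordinary Birkhoff (in the ``tail window'' form of Lemma~\ref{L:moving-target}, which concerns the linear orbit of a single $T_j$) together with a perturbation lemma (Lemma~\ref{L:pert}) showing that $d\bigl(T_k^{n-n_k}\cdots T_1^{n-n_1},\,T_k^n\cdots T_1^n\bigr)<\epsilon n$ whenever all $n_i<\delta n$. One then inductively builds good sets $Z_k\supset Z_{k-1}\supset\cdots\supset Z_1$ of measure close to $1$ so that from $z\in Z_j$ one can choose $n_j<\delta n$ with $T_j^{n-n_j}.z\in Z_{j+1}$, and corrects the resulting perturbed path back to the honest polygonal path via Lemma~\ref{L:pert} and the parallelogram inequality. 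This replaces the (unavailable) polynomial pointwise ergodic theorem by nothing beyond Birkhoff.

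There is also a secondary gap in step (iv): the stated bound $|c(T_j^{n+1},y)-c(T_j^n,y)|\le K|T_j|_S$ is a modulus of continuity in $n$ at \emph{fixed} $y$, but on the moving orbit you must also compare $c(T_j^{n'},S_{n'}x)$ with $c(T_j^{n'},S_n x)$, i.e.\ control $d(S_n,S_{n'})$ for $n'\in[(1-\delta)n,n]$. In a nilpotent group this requires the commutator-length estimate of Lemma~\ref{L:pert}, which scales like $\delta^{1/(s+1)}n$ for $s$th-order commutators rather than linearly in $n-n'$; the naive modulus-of-continuity estimate does not give it. So even granting step (iii), step (iv) as written does not close.
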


\medskip

The case $k=1$ was shown by Austin \cite{Austin} under the weaker assumption that $c(\gamma,-)\in L^1(X,m)$ for every $\gamma\in\Gamma$.
For reader's convenience we include a proof.

\begin{theorem}[Austin \cite{Austin}]\label{L:k-1}\hfill{}\\
	Let $c:\Gamma\times X\to\bbR_+$ be a subadditive cocycle with $c(\gamma,-)\in L^1(X,m)$ for every $\gamma\in\Gamma$.
	Then for any $T\in\Gamma$ one has
	\[
		\lim_{n\to\infty}\frac{1}{n} c(T^n,x)=\phi(\ab{T})
	\] 
	for $m$-a.e. $x\in X$ and in $L^1(X,m)$. 
\end{theorem}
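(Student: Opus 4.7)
The plan is to apply Kingman's subadditive ergodic theorem to the restriction of $c$ to the cyclic subgroup $\langle T\rangle$, and then to identify the resulting $T$-invariant limit as the constant $\phi(\ab{T})$ by upgrading $T$-invariance to full $\Gamma$-invariance using the nilpotent commutator estimates built into the asymptotic cone.

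First, the sequence $a_n(x) := c(T^n,x)$ is a nonnegative subadditive cocycle over the (possibly non-ergodic) $\bbZ$-action by $T$ on $(X,m)$. Since $c(T,-)\in L^1(X,m)$, Kingman's theorem yields a.e.\ and $L^1$ convergence $c(T^n,x)/n\to h(x)$ for some $T$-invariant $h\in L^1(X,m)$. The normalized cocycle is dominated in $L^1$ by the Birkhoff average $\frac{1}{n}\sum_{j=0}^{n-1}c(T,T^jx)$, so dominated convergence gives
\[
\int_X h\,dm=\lim_{n\to\infty}\frac{1}{n}\int_X c(T^n,x)\,dm(x)=\lim_{n\to\infty}\frac{1}{n}\overline{c}(T^n)=\phi(\ab{T}),
\]
the last equality being Lemma~\ref{L:turning-to-horizontal}. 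It then suffices to show that $h$ is $\Gamma$-invariant, for ergodicity of $\Gamma\acts(X,m)$ will force $h\equiv\phi(\ab{T})$.

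To prove $\Gamma$-invariance, fix $\gamma\in\Gamma$ and compare $c(T^n,\gamma.x)$ with $c(T^n,x)$ in two stages. Stage (i): two applications of subadditivity of $c$ estimate $|c(T^n,\gamma.x)-c(\gamma^{-1}T^n\gamma,x)|$ by terms of the form $c(\gamma^{\pm1},R^nx)$ for measure-preserving $R\in\{T,\gamma^{-1}T\gamma\}$; each such term divided by $n$ tends to $0$ a.e.\ via the Borel--Cantelli principle that $f(R^nx)/n\to 0$ a.e.\ whenever $f\in L^1$ and $R$ is measure-preserving. Stage (ii): write $\gamma^{-1}T^n\gamma=T^n\, z_n$ with $z_n=[T^{-n},\gamma^{-1}]\in[\Gamma,\Gamma]$; combining Lemmas~\ref{L:flattening-of-powers} and \ref{L:multiplication} gives $\ssc{n}{z_n}\to e$ in $G_\infty$, and hence by the Gromov--Hausdorff relation~(\ref{e:lengthscaling}) one has $|z_n|_S=o(n)$. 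Subadditivity then bounds $|c(T^n z_n,x)-c(T^n,x)|$ by error terms involving $c(z_n^{\pm1},\cdot)$, and the goal reduces to showing $c(z_n,\cdot)/n\to 0$ a.e.

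The main obstacle is precisely this pointwise vanishing of $c(z_n,\cdot)/n$ along the deterministic sequence $z_n$ of length $o(n)$: in $L^1$-norm it is immediate since $\overline{c}(z_n)\le K|z_n|_S=o(n)$, but a.e.\ control for \emph{varying} $z_n$ is more delicate. I would handle it by induction on the depth of the lower central series of $\Gamma$: establish $h$-invariance first under the deepest central subgroup (where $\gamma$ commutes with $T$, so the two sides agree up to $c(\gamma^{\pm1},\cdot)$), and then inductively lift to $\Gamma^j$ using that the commutator $z_n$ lies in $\Gamma^{j+1}$ with vanishing class in the abelianization of the quotient $\Gamma^{j+1}/\Gamma^{j+2}$. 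At each stage, $z_n$ decomposes along a Mal'cev basis into powers of basic commutators to which Kingman's theorem applies on the relevant cyclic subgroups; the corresponding limits vanish because the associated values of $\phi$ on the abelianization vanish. Assembling these contributions yields $h(\gamma.x)=h(x)$ a.e., whence $h\equiv\phi(\ab{T})$ by $\Gamma$-ergodicity.
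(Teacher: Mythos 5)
Your setup matches the paper's: apply Kingman to $c(T^n,x)$ to get a $T$-invariant limit $h$, integrate to identify $\int h = \phi(\ab{T})$ via Lemma~\ref{L:turning-to-horizontal}, and then upgrade to $\Gamma$-invariance by comparing $c(T^n,x)$ with $c(T^n,\gamma.x)$ through a conjugate of $T^n$. You also correctly identify where the difficulty lies: the error term involves $c(w_n,\cdot)/n$ for a \emph{varying} sequence $w_n$ in the commutator subgroup (your $z_n = [T^{-n},\gamma^{-1}]$; the paper's $\gamma_n = T^n\gamma T^{-n}$) with $|w_n|_S = o(n)$, and for such $n$-dependent elements there is no direct Borel--Cantelli or maximal inequality to conclude a.e.\ vanishing of $c(w_n,x)/n$.

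Where the proposal departs from the paper is in how it resolves this obstacle, and your route has a genuine gap. The paper's trick is much simpler than what you propose: it does \emph{not} try to prove $c(w_n,x)/n\to 0$ a.e.\ along the full sequence. Instead it observes that $\|c(w_n^{\pm1},\cdot)\|_1 \le K_1\,|w_n|_S = o(n)$ (with $K_1 = \max_{s\in S}\|c(s,-)\|_1$), so the error $f_n$ tends to $0$ in $L^1$, and hence a.e.\ along some subsequence $n_i\to\infty$. Because Kingman already gives $c(T^n,x)/n\to h(x)$ a.e.\ along the full sequence, one may take the limit in the subadditivity inequality along $n_i$ alone and still conclude $h(x)\le h(\gamma.x)$ a.e.; applying this for all $\gamma$ gives invariance, and ergodicity finishes. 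Your proposed alternative --- induction on the lower central series plus a Mal'cev-basis decomposition of $z_n$ into powers of basic commutators to which Kingman is applied along cyclic subgroups --- is both more complicated and not obviously correct: the exponents in the Mal'cev coordinates of $z_n = [T^{-n},\gamma^{-1}]$ are \emph{polynomial} in $n$, not of the form $n\cdot(\text{const})$, so Kingman on $\langle b_j\rangle$ does not directly yield the a.e.\ statement $c(b_j^{e_j(n)},x)/n\to 0$ that you need, and you would also have to control the moving base points at which each factor is evaluated. You should replace that entire step with the $L^1$-to-subsequence device; it is both sufficient and far less work.
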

\begin{proof}
		Kingman's subadditive ergodic theorem, applied to the sub-additive cocycle $h_n(x):=c(T^n,x)$ over $(X,m,T)$,
		gives an $m$-a.e. and $L^1$ convergence
		\[
			\lim_{n\to\infty} \frac{1}{n} c(T^n,x) = h(x),
		\]
		where $h(x)$ is a measurable $T$-invariant function, satisfying
		\[
			\int_X h(x)\,dm(x)=\lim_{n\to\infty}\frac{1}{n}\cdot\int_X h_n(x)\,dm(x)=\lim_{n\to\infty}\frac{1}{n}\overline{c}(T^n)=\phi(\ab{T}).
		\]
		(We used Lemma~\ref{L:turning-to-horizontal} in the last equality).
		Fix $\gamma\in \Gamma$ and denote $\gamma_n:=T^n\gamma T^{-n}$. 
		Since $\gamma_n T^n=T^n\gamma$ we have
		\begin{equation}\label{e:gammanTn}
			c(T^n,x)-c(\gamma_n^{-1},\gamma_nT^n.x)\le c(\gamma_nT^n,x)=c(T^n\gamma,x)\le c(T^n,\gamma.x)+c(\gamma,x).
		\end{equation}
		Denote $f_n(x)=n^{-1}(c(\gamma_n,x)+c(\gamma_n^{-1},\gamma_nT^n.x))$
		and observe that since one has $|\gamma^{-1}_n|_S=|\gamma_n|_S=o(n)$ (cf. Breuillard \cite[Lemma 5.6]{Bre})
		\[
			\|f_n\|_1\le \frac{K_1}{n}\cdot 2|\gamma_n|_S\to 0,\qquad \textrm{where}\qquad K_1=\max_{s\in S} \|c(s,-)\|_1.
		\]
		Thus there is a sequence $n_i\to\infty$ so that $f_{n_i}(x)\to 0$ for $m$-a.e. $x\in X$.
		Dividing (\ref{e:gammanTn}) by $n$, and taking the limit along the subsequence $n_i$, one obtains 
		\[
			h(x)\le	h(\gamma.x).
		\]
		Since this is true a.e. for every $\gamma\in\Gamma$, $h$ is $\Gamma$-invariant. By ergodicity it is constant. 
		This constant is $\phi(\ab{T})$ by integration.
\end{proof}

In the general case of $k\ge 2$ the term $n^{-1}\cdot c(T_1^n,x)$ converges to $\phi(\ab{T}_1)$ by the above, but
dealing with the next terms, such as $n^{-1}\cdot c(T_2^n,T_1^n.x)$, one faces a "moving target" problem.
We shall overcome this difficulty by finding regions $Z_2,\dots,Z_k\subset X$ where 
$n^{-1}c(T_\ell^n,z)=\phi(\ab{T}_\ell)+o(n)$ for $z\in Z_\ell$, and perturbing the polygonal path $T_k^n\cdots T_2^nT_1^n$
slightly to make sure to land in the appropriate regions at appropriate times.
We need several lemmas.

\medskip

\begin{lemma}[Parallelogram inequality]\label{L:4points}\hfill{}\\
	Given $\alpha,\beta,\tau,\tau'\in\Gamma$ one has
	\[
		\left|c(\tau,\,\alpha.x)-c(\tau',\,\beta.x)\right| \le K\cdot (d(\alpha,\beta)+d(\tau\alpha,\tau'\beta))
	\]
	for $K=\max_{s\in S} \|c(s,-)\|_\infty$.
\end{lemma}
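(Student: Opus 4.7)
The plan is to prove the symmetric inequality in both directions by expressing $\tau'$ as $\tau$ conjugated/twisted by short group elements, then applying subadditivity of $c$ together with the bound $c(\gamma,x) \le K \cdot |\gamma|_S$ that follows from the $L^\infty$ assumption on generators. Concretely, since the word metric $d$ is right-invariant with symmetric generating set $S$, set
\[
\eta_1 := \beta\alpha^{-1}, \qquad \eta_2 := \tau'\beta\alpha^{-1}\tau^{-1},
\]
so that $|\eta_1|_S = d(\alpha,\beta)$, $|\eta_2|_S = d(\tau\alpha,\tau'\beta)$, $\beta = \eta_1\alpha$, and $\tau' = \eta_2\tau\eta_1^{-1}$.

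With these substitutions I would rewrite $c(\tau',\beta.x) = c(\eta_2\tau\eta_1^{-1},\,\eta_1\alpha.x)$ and apply the subadditivity of the cocycle twice, peeling off $\eta_2$ and $\eta_1^{-1}$:
\[
c(\eta_2\tau\eta_1^{-1},\eta_1\alpha.x) \le c(\eta_2,\tau\alpha.x) + c(\tau\eta_1^{-1},\eta_1\alpha.x) \le c(\eta_2,\tau\alpha.x) + c(\tau,\alpha.x) + c(\eta_1^{-1},\eta_1\alpha.x).
\]
Then I would bound the two error terms by iterating the subadditive cocycle identity along a geodesic word representation: if $\eta = s_1\cdots s_\ell$ with $s_i \in S$ and $\ell = |\eta|_S$, then $c(\eta, z) \le \sum_{i=1}^\ell c(s_i, s_{i-1}\cdots s_1.z) \le K \cdot \ell$, for any $z$, using $K = \max_{s \in S}\|c(s,-)\|_\infty$. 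This gives $c(\eta_2,\tau\alpha.x) \le K\cdot d(\tau\alpha,\tau'\beta)$ and $c(\eta_1^{-1},\eta_1\alpha.x) \le K\cdot|\eta_1^{-1}|_S = K\cdot d(\alpha,\beta)$, whence
\[
c(\tau',\beta.x) - c(\tau,\alpha.x) \le K\cdot\bigl(d(\alpha,\beta) + d(\tau\alpha,\tau'\beta)\bigr).
\]

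The reverse inequality follows verbatim by swapping the roles of $(\tau,\alpha)$ and $(\tau',\beta)$; note the right-hand side is unchanged because $d$ is symmetric (as $S$ is symmetric), so the two perturbation distances are the same. Combining both bounds yields the parallelogram inequality. The argument is essentially routine given the tools already in place; the only mild subtlety is being careful with the factorization $\tau' = \eta_2\tau\eta_1^{-1}$ (choosing the correct "sides" for $\eta_1,\eta_2$ so that the right-invariance of $d$ matches the cocycle identity $c(\gamma_1\gamma_2,x) \le c(\gamma_1,\gamma_2.x) + c(\gamma_2,x)$), which is what motivates the author's right-invariance convention noted in the footnote.
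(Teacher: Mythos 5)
Your proof is correct and follows essentially the same route as the paper: the paper also factors $\tau' = \omega\tau\delta^{-1}$ with $\delta = \beta\alpha^{-1}$ and $\omega = \tau'\beta\alpha^{-1}\tau^{-1}$ (your $\eta_1,\eta_2$), applies subadditivity twice, bounds the two error terms by $K$ times their word lengths, and obtains the reverse inequality symmetrically. The only cosmetic difference is that you spell out the iterated subadditivity estimate $c(\eta,z)\le K\cdot|\eta|_S$ which the paper takes for granted.
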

\begin{proof}
	Let us write $\beta=\delta\alpha$ and $\tau'\beta=\omega\tau\alpha$, so 
	\[
		|\delta|=|\delta^{-1}|=d(\alpha,\beta),
		\qquad
		|\omega|=|\omega^{-1}|=d(\tau\alpha,\tau'\beta).
	\]
	Since $\tau'=\omega\tau\delta^{-1}$ we have
	\[
		\begin{split}
			c(\tau',\beta.x)&= c(\omega\tau\delta^{-1},\beta.x)
			\le c(\omega,\tau\alpha.x)+c(\tau,\alpha.x)+c(\delta^{-1},\beta.x)\\
			&\le c(\tau,\alpha.x)+K\cdot (|\omega|+|\delta^{-1}|).
		\end{split}
	\]
	Conversely
	\[
		\begin{split}
			c(\tau,\alpha.x)&= c(\omega^{-1}\tau'\delta,\alpha.x)
			\le c(\omega^{-1},\tau'\beta.x)+c(\tau',\beta.x)+c(\delta,\alpha.x)\\
			&\le c(\tau',\beta.x)+K\cdot (|\omega^{-1}|+|\delta|).
		\end{split}
	\]
\end{proof}

\medskip

\begin{lemma}\label{L:moving-target}
	Let $T\in\Gamma$, $\delta>0$, and a measurable subset $E\subset X$ be given. Then
	the set 
	\[
		E^*:=\left\{ x\in X \mid \liminf_{n\to\infty}\frac{\#\{ n'<\delta\cdot n \mid T^{n-n'}.x\in E\}}{\delta\cdot n}>0\right\}
	\]
	has $m(E^*)\ge m(E)$. Moreover, given $\epsilon>0$ there is $N$ so that the set
	\[
		E^*_N:= \left\{ x\in X \mid \forall n\ge N,\ \frac{\#\{ n'<\delta\cdot n \mid T^{n-n'}.x\in E\}}{\delta\cdot n}>0\right\}
	\]
	has $m(E^*_N)>m(E)-\epsilon$.
\end{lemma}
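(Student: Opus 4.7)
The plan is to reduce both claims to a single application of Birkhoff's ergodic theorem for the single transformation $T$, followed by an Egorov-type refinement for the uniform statement. The only subtle point is that although $\Gamma \acts (X,m)$ is ergodic, the cyclic subgroup $\langle T\rangle$ need not be; I will track this by working with the conditional expectation onto the $\sigma$-algebra $\mathcal{I}_T$ of $T$-invariant sets.

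First I would rewrite the relevant count as a difference of Birkhoff sums: since $n' \mapsto n-n'$ bijects $\{0,1,\ldots,\lfloor\delta n\rfloor-1\}$ onto an interval of integers just below $n$, one has
\[
\#\{n' < \delta n \mid T^{n-n'}.x \in E\} = \sum_{k=0}^{n-1}\mathbf{1}_E(T^k.x) - \sum_{k=0}^{n-\lfloor\delta n\rfloor -1}\mathbf{1}_E(T^k.x) + O(1).
\]
Applying Birkhoff's theorem to $T$ and dividing by $\delta n$, I obtain a $T$-invariant function $h := \bbE[\mathbf{1}_E \mid \mathcal{I}_T]$ with $0 \le h \le 1$ and $\int h\,dm = m(E)$, such that for $m$-a.e.\ $x$,
\[
\lim_{n\to\infty} \frac{\#\{n' < \delta n \mid T^{n-n'}.x \in E\}}{\delta n} = h(x).
\]
This already proves the first claim: up to a null set, $\{h > 0\} \subseteq E^*$, and since $h \le 1$,
\[
m(E^*) \ge m(\{h > 0\}) \ge \int h \, dm = m(E).
\]

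For the uniform $N$, I would combine this with Egorov's theorem in the following way. Given $\epsilon > 0$, choose $\eta > 0$ small enough that $m(\{h > \eta\}) > m(\{h > 0\}) - \epsilon/2 \ge m(E) - \epsilon/2$. By Egorov's theorem applied to the a.e.\ convergence above restricted to $\{h > \eta\}$, there is a measurable subset $F' \subseteq \{h > \eta\}$ with $m(F') > m(\{h > \eta\}) - \epsilon/2 > m(E) - \epsilon$ on which the convergence is uniform. Pick $N$ so large that for every $x \in F'$ and every $n \ge N$,
\[
\left| \frac{\#\{n' < \delta n \mid T^{n-n'}.x \in E\}}{\delta n} - h(x) \right| < \frac{\eta}{2},
\]
which forces the ratio to exceed $\eta/2 > 0$ and in particular the count to be strictly positive. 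Hence $F' \subseteq E^*_N$, giving $m(E^*_N) > m(E) - \epsilon$.

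The argument is essentially routine once one notices that the block count is a telescoping difference of Birkhoff sums; the only place requiring care is the absence of ergodicity for $T$, which is handled cleanly by replacing the constant $m(E)$ with the conditional expectation $h$ and using the inequality $m(\{h > 0\}) \ge \int h = m(E)$. No other obstacle is expected.
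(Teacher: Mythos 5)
Your proof is correct and the first half is essentially the same as the paper's: both use Birkhoff for $T$, express the tail block count as a difference of ergodic averages (you telescope, the paper splits $A_0^n = \tfrac{k}{n}A_0^k + \tfrac{n-k}{n}A_k^n$, which is the same computation), and both finish with $m(\{h>0\}) \ge \int h\,dm = m(E)$ for $h = \bbE(\mathbf{1}_E \mid \mathcal{I}_T)$.

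For the uniform statement, your Egorov argument is valid but does slightly more work than necessary. The paper simply observes that $E^*_1 \subseteq E^*_2 \subseteq \cdots$ and that (up to null sets) $\bigcup_N E^*_N \supseteq E^* = \{h>0\}$; monotone convergence of measures then gives $m(E^*_N) \to m\bigl(\bigcup_N E^*_N\bigr) \ge m(E)$, so any sufficiently large $N$ works. This avoids introducing a threshold $\eta$ and a uniform-convergence set $F'$. Your route buys you uniform convergence on $F'$, which you do not actually need, since $E^*_N$ only requires the ratio to be strictly positive, not close to $h$; the monotone-union argument is the more economical way to convert the pointwise statement into the uniform one. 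Still, there is no gap in your reasoning.
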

\begin{proof}
	Given a function $f\in L^1(X,m)$ and integers $1\le k<n$ consider the averaged function
	\[
		A_k^nf(x):=\frac{1}{n-k}\cdot\sum_{j=k}^{n-1} f(T^j.x).
	\]
	Birkhoff's pointwise ergodic theorem asserts $m$-a.e. convergence
	\[
		\lim_{n\to\infty} A_{0}^nf =\bbE(f \mid \mathcal{B}^T)
	\]
	to the conditional expectation of $f$ with respect to the the sub-$\xi$-algebra of $T$-invariant sets $\mathcal{B}^T$.
	(The conditional expectation is defined only up to null sets, but so is the above convergence). 
	We observe that since 
	\[
		A_0^nf(x) = \frac{k}{n} \cdot A_0^kf(x) + \frac{n-k}{n} \cdot A_{k}^nf(x),
 	\]
	taking $k=\lceil(1-\delta)n\rceil$ with $0<\delta<1$ fixed and letting $n\to\infty$, 
	it follows that for $m$-a.e. $x\in X$
	\[
		\frac{1}{\eta\cdot n} \sum_{\lceil(1-\eta)n\rceil}^n f\circ T^j\ \overto{}\ \bbE(f \mid \mathcal{B}^T). 
	\]
	Applying this to the characteristic function $f=1_E$ of $E\subset X$, we deduce
	that for $m$-a.e. $x\in X$
	\[
		\lim_{n\to\infty}\frac{\#\{ \lceil (1-\delta)n\rceil\le j\le n \mid T^j.x\in E\}}{\delta\cdot n}=h_E(x),
	\]
	where $h_E:=\bbE(E \mid \mathcal{B}^T)$ is the the conditional expectation of $1_E$.
	Since $0\le h_E(x)\le 1$ a.e. while $\int h_E=m(E)$, it follows that the set $\{ x\in X \mid h_E(x)>0\}$ has measure $\ge m(E)$.
	Yet the set $\{x \mid h_E(x)>0\}$ is, up to null sets, precisely $E^*$. Hence $m(E^*)\ge m(E)$.

	For the second statement, note that $\{E^*_N\}$ is an increasing sequence of measurable sets whose union (=limit) 
	is $E^*$.
\end{proof}
\medskip

\begin{lemma}[Small perturbations of polygonal paths]\label{L:pert}\hfill{}\\
	Given $T_1,\dots,T_k\in\Gamma$ and $\epsilon>0$, there is $\delta>0$ and $N$ so that
	for all $n\ge N$ we have:
	\[
		\frac{1}{n}\cdot d(T_k^{n-n_k}T_2^{n-n_2}T_1^{n-n_1},\, 
		T_k^nT^n_{k-1}\cdots T^n_2T^n_1)<\epsilon
	\]
	for any $0\le n_1,\dots,n_k\le \delta\cdot n$.
\end{lemma}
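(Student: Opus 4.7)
The plan is to argue by contradiction and exploit the Gromov-Hausdorff convergence expressed in (\ref{e:lengthscaling}): if two sequences $\gamma_n, \gamma'_n \in \Gamma$ both satisfy $\ssc{n}{\gamma_n} \to g$ and $\ssc{n}{\gamma'_n} \to g$ in $G_\infty$, then $d(\gamma_n,\gamma'_n)/n \to d_\infty(g,g) = 0$. So it would suffice to show that the unperturbed product $T_k^n\cdots T_1^n$ and its perturbed version $T_k^{n-n_k}\cdots T_1^{n-n_1}$ both rescale (via $\ssc{n}{\cdot}$) to the same element of $G_\infty$ whenever the exponent perturbations satisfy $n_j = o(n)$.

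I would first set up the contradiction: assume there exist $\epsilon_0 > 0$, sequences $\delta_\ell \to 0$ and $n^{(\ell)} \to \infty$, and integers $n_j^{(\ell)} \in [0, \delta_\ell n^{(\ell)}]$ for which the rescaled distance stays $\ge \epsilon_0$. Since $n_j^{(\ell)}/n^{(\ell)} \le \delta_\ell \to 0$, this ratio automatically tends to $0$; no subsequence extraction is required, and uniformity in the $n_j$ comes for free.

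Next, the technical heart: computing $\ssc{n}{T_j^{n-m}}$ directly from (\ref{e:sclt}). Writing $\log T_j = v_j^{(1)} + \cdots + v_j^{(r)}$ with $v_j^{(i)} \in V_i$, one has
\[
	\ssc{n^{(\ell)}}{T_j^{n^{(\ell)} - n_j^{(\ell)}}} \;=\; \exp_\infty\!\left( \sum_{i=1}^r \frac{n^{(\ell)} - n_j^{(\ell)}}{(n^{(\ell)})^i}\, L(v_j^{(i)}) \right).
\]
As $\ell \to \infty$, the $i=1$ coefficient $(n^{(\ell)}-n_j^{(\ell)})/n^{(\ell)}$ tends to $1$, while the higher-order coefficients are $O(n_j^{(\ell)}/n^{(\ell)})$ times a vanishing factor and therefore tend to $0$. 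Hence the rescaling converges to $h_j := \exp_\infty(\pi_\infty \circ L \circ \log T_j)$, which is exactly the limit obtained in Lemma~\ref{L:flattening-of-powers} for the unperturbed case $n_j^{(\ell)} = 0$. In short, a perturbation of size $o(n)$ in the exponent is invisible at the horizontal level and drowned by the $n^{-i}$ rescaling at higher strata.

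Finally, applying Lemma~\ref{L:multiplication} inductively, both products rescale to the same element $h_k \cdots h_1 \in G_\infty$. Then (\ref{e:lengthscaling}) with $t_i = n^{(\ell)}$ gives
\[
	\lim_{\ell\to\infty} \frac{1}{n^{(\ell)}} \cdot d\bigl(T_k^{n^{(\ell)} - n_k^{(\ell)}} \cdots T_1^{n^{(\ell)} - n_1^{(\ell)}},\; T_k^{n^{(\ell)}} \cdots T_1^{n^{(\ell)}}\bigr) \;=\; d_\infty(h_k \cdots h_1, h_k \cdots h_1) \;=\; 0,
\]
contradicting $\ge \epsilon_0$. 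There is no real obstacle beyond unpacking the rescaling formula carefully; the only thing to be slightly careful about is that the contradiction scheme with $\delta_\ell \to 0$, $n^{(\ell)} \ge \ell$ automatically delivers both $\delta$ and $N$ depending only on $\epsilon$ and $T_1,\dots,T_k$, as required by the statement.
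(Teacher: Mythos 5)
Your proof is correct, and it takes the route that the paper itself explicitly acknowledges but opts not to develop: immediately before their proof the authors write, ``This Lemma can also be shown by rescaling and passing to the Gromov-Hausdorff limit in $G_\infty$ and relying on Lemma~\ref{L:multiplication}. Here we give a more direct argument.'' Your contradiction scheme --- diagonal sequences $\delta_\ell\to 0$, $n^{(\ell)}\to\infty$, unpacking (\ref{e:sclt}) to see that each $\ssc{n^{(\ell)}}{T_j^{n^{(\ell)}-n_j^{(\ell)}}}$ converges to $h_j=\exp_\infty(\pi_\infty\circ L\circ\log T_j)$, applying Lemma~\ref{L:multiplication} inductively, and concluding via (\ref{e:lengthscaling}) that the rescaled distance tends to $d_\infty(h_k\cdots h_1,h_k\cdots h_1)=0$ --- is precisely that compactness argument, and it is sound.

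The paper's own proof is a hands-on algebraic estimate. It first reduces to absorbing a single factor $T^{-m}$ with $m<\delta n$ on the right of a product $T_k^n\cdots T_1^n$, then uses the identity $ba=a[a^{-1},b]b$ to commute $T^{-m}$ past each $T_j^n$, generating a bounded number of iterated commutators of the form $[\cdots[T^m,T_{j_1}^n]^{-1},\cdots,T_{j_s}^n]=[\cdots[T,T_{j_1}],\cdots,T_{j_s}]^{\pm m n^s}$. Since the exponent is $O(\delta n^{s+1})$ and such an element lies in $\Gamma^{s+1}$, where word length scales like $p\mapsto p^{1/(s+1)}$, its length is $O(\delta^{1/(s+1)}n)$. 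This buys a self-contained argument, independent of the full asymptotic-cone machinery, together with an explicit (if crude) relation between $\delta$ and $\epsilon$; your approach is shorter and conceptually lighter, but inherits the nonconstructive character of the Gromov--Hausdorff compactness it relies on. One small imprecision in your write-up: for $i\ge 2$ the coefficient $(n^{(\ell)}-n_j^{(\ell)})/(n^{(\ell)})^i$ is $\Theta\bigl((n^{(\ell)})^{1-i}\bigr)$; it vanishes simply because $i\ge 2$, not because it is ``$O(n_j^{(\ell)}/n^{(\ell)})$ times a vanishing factor.'' The conclusion is unaffected.
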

This Lemma can also be shown by rescaling and passing to the Gromov-Hausdorff limit in $G_\infty$ and relying on Lemma~\ref{L:multiplication}.
Here we give a more direct argument.
\begin{proof}
	It suffices to show that for fixed $k\in\bbN$, $T,T_1,\dots,T_k\in\Gamma$, $\epsilon'>0$, there is $\delta>0$ so that 
	for $n\gg 1$ one has:
	\begin{equation}\label{e:push1}
		d(T_k^n\cdots T_2^n T_1^n T^{-m},T_k^n\cdots T_2^n T_1^n)<\epsilon'\cdot n\qquad (\forall m<\delta\cdot n).
	\end{equation}
	Indeed, applying such an argument to $T_{j+1},\dots,T_\ell$ and $T=T_{j}$ with $\epsilon'=\epsilon/k$ we get 
	\[
		\frac{1}{n}(T_\ell^n\cdots T_{j+1}^n T_j^{n-n_j}T_{j-1}^{n-n_{j-1}}\cdots T_1^{n-n_1}, 
		T_\ell^n\cdots T_{j+1}^n T_j^{n}T_{j-1}^{n-n_{j-1}}\cdots T_1^{n-n_1})<\epsilon',
	\]
	and summing these inequalities over $j=1,\dots,\ell-1$, we get the 
	estimate $\ell\cdot \epsilon'\le \epsilon$ as required.
	
	To establish (\ref{e:push1}) use the general group-theoretic identity $\ ba=a [a^{-1},b] b\ $
	to push terms from the right to the left creating some commutator factors.
	More precisely  
	\[
		\begin{split}
			&T_k^n\cdots T_2^n T_1^n T^{-m}=T_k^n\cdots T_2^n\cdot (T^{-m}\cdot [T^{m},T_1^n])\cdot T_1^{n}\\
			&=T_k^n\cdots T_3^n\, (T^{-m}\cdot [T^m,T_2^n]\cdot [T^m,T_1^n]\cdot [[T^m,T_1^n]^{-1},T_2^n])\,  T_2^nT_1^{n}=\dots\\
			&=(T^{-m}[T^m,T_k^n]\cdots [T^m,T_1^n]\cdots )\cdot T_k^n\cdots T_2^n T_1^n 
		\end{split}
	\]
	where the expression in the parentheses is a product of $O(k)$ factors each being a higher commutator of the form 
	\[
		[\cdots[[T^m,T_{j_1}^n]^{-1},T_{j_2}^n]\cdots,T_{j_s}^n].
	\]
	We need to show that the word length of the expression in parentheses is $<\epsilon n$, and it suffices to show that
	each of the $O(k)$-commutator expressions has length $<\epsilon'n$, where $\epsilon'$ depends on $\epsilon$ and $k$.
	Iterated commutators of order $s$ above the nilpotency degree $r$ give identity. For $s\le r$ one has 
	(cf. \cite[Lemma 3.8]{Bre})
	\begin{equation}\label{e:comm}
		[\cdots[T^m,T_{j_1}^n]^{-1},\cdots,T_{j_s}^n]=[\cdots[T,T_{j_1}],\cdots,T_{j_s}]^{\pm m\cdot n^s}
	\end{equation}
	For each one of the finitely many elements $\gamma=[\cdots[T,T_{j_1}],\cdots,T_{j_s}]$ as above, we have 
	\[
		|\gamma^{p}|_S\le C_\gamma\cdot p^{\frac{1}{s+1}}\qquad (p\ge 1),
	\]
	because such $\gamma$ lies in the $(s+1)$-term of the lower central series 
	$\Gamma^{s+1}=[\Gamma,\Gamma^s]=[\Gamma,[\Gamma \dots]]$, and the growth rate on this subgroup
	is asymptotically scaled by $t^{s+1}$ (recall that in the asymptotic cone $G_\infty$ the homothety 
	$\delta_t$ acts by multiplication by $t^{j}$ on the $\mfg^{j}/\mfg^{j+1}$-subspace of $\mfg_\infty$).
	Therefore the length of the elements in (\ref{e:comm}) is bounded by
	\[
		C(m\cdot n^s)^{\frac{1}{s+1}}<C(\delta\cdot n^{s+1})^{\frac{1}{s+1}}=C\delta^{{\frac{1}{s+1}}}\cdot n
	\]
	which can be made $<2^{-k}\epsilon$ by choosing $\delta>0$ small enough.
\end{proof}

Finally, we are ready for the proof of the Ergodic Theorem along Polygonal Paths.

\medskip

\begin{proof}[Proof of Theorem~\ref{T:erg-poly}]\hfill{}\\
	Fix $\epsilon>0$ and let $\delta>0$ and $N$ be as in Lemma~\ref{L:pert}.
	
	Choose a small $\eta>0$ and let $M\in\bbN$ be large enough so that for each ${j}=1,\dots,k$ the set
	\[
		Y_{j}:=\left\{y\in X \mid \forall n\ge M:\quad |\frac{1}{n}\cdot c(T_{j}^n,y)-\phi(\ab{T}_{j})|<\epsilon\right\}
		\qquad\textrm{has}\qquad m(Y_{j})>1-\eta.
	\]
	Let $Z_k:=Y_k$, and apply Lemma~\ref{L:moving-target} with $E=Z_k$ to find $M_k\in\bbN$ so that the set
	\[
		(Z_k)_{M_k}^*:=\left\{z\in X \mid \forall n>M_k,\  \frac{\#\{n'<\delta\cdot n \mid  T^{n-n'}.z\in Z_k\}}{\delta\cdot n}>0\right\}
	\]
	satisfies $m((Z_k)_{M_k}^*)>m(Z_k)-\eta>1-2\eta$.
	Define $Z_{k-1}:=Y_{k-1}\cap (Z_k)_{M_k}^*$, and observe that
	\[
		m(Z_{k-1})>1-3\eta.
	\]
	One then continues inductively to define $Z_{{j}-1}:=(Z_{j})_{M_{{j}}}^*\cap Y_{{j}-1}$ (for ${j}=k-1,\dots,3,2$), 
	where $M_{{j}}\in\bbN$ is chosen large enough to ensure that the set
	\[
		(Z_{j})_{M_{{j}}}^*:=\left\{z\in X \mid \forall n>M_{j},\  
		\frac{\#\{n'<\delta\cdot n \mid  T^{n-n'}.z\in Z_{j}\}}{\delta\cdot n}>0\right\}
	\]
	has
	\[
		m((Z_{j})_{M_{{j}}}^*)>m(Z_{j})-\eta.
	\]
	The sets $Z_1,Z_2,\dots,Z_k$ that are defined in this manner satisfy 
	\[
		m(Z_1)>m(Z_2)-2\eta>m(Z_3)-4\eta>\dots>m(Z_k)-2(k-1)\eta>1-(2k-1)\eta.
	\]
	Let $N:=\max(M,M_1,\dots,M_k)$. Then for every $n>N$ and every $z\in Z_{j}$, there is $n_{j}<\delta\cdot n$
	so that $T^{n-n_{j}}.z\in Z_{{j}+1}$ and
	\[
		\left|\frac{1}{n}\cdot c(T_{j}^n,z)-\phi(\ab{T}_{j})\right|<\epsilon.
	\]
	Thus for $z$ in a set $Z_1$ of size $>1-(2k-1)\eta$ and every $n\ge N$, 
	there exist $n_1,\dots,n_k$ all bounded by $\delta\cdot n$, so that
	\[
		\left|\frac{1}{n}\cdot c(T_{j}^n,T_{{j}-1}^{n-n_{{j}-1}}\cdots T_2^{n-n_2}T_1^{n-n_1}.z)-\phi(\ab{T}_{j})\right|<\epsilon.
	\]
	Applying Lemma~\ref{L:pert} we have for each ${j}=1,\dots,k$:
	\[
		\begin{split}
			&d(T_{j}^n T_{{j}-1}^{n-n_{{j}-1}}\cdots T_2^{n-n_2}T_1^{n-n_1}, T_{j}^n T_{{j}-1}^n\cdots T_2^n T_1^n)<n\epsilon,\\
			&d(T_{{j}-1}^{n-n_{{j}-1}}\cdots T_2^{n-n_2}T_1^{n-n_1}, T_{{j}-1}^n\cdots T_2^n T_1^n)<n\epsilon.
		\end{split}
	\]
	So by Lemma~\ref{L:4points} and the Lipschitz property we have
	\begin{equation}\label{e:poly-pertpoly}
		\left|c(T_{j}^n,T_{{j}-1}^{n-n_{{j}-1}}\cdots T_2^{n-n_2}T_1^{n-n_1}.z)
		-c(T_{j}^n,T_{{j}-1}^n\cdots T_2^n T_1^n.z)\right|<2Kn\epsilon.
	\end{equation}
	Therefore for every $x\in Z_1$ and $n>N$ one has:
	\begin{equation}\label{e:eps}
		\left|\frac{1}{n}\cdot c(T_{j}^n,T_{{j}-1}^n\cdots T_2^n T_1^n.x)-\phi(\ab{T}_{j}) \right|<(2K+1)\epsilon
		\qquad ({j}=1,\dots,k).
	\end{equation}
	Applying this argument with a sequence of $\eta\to 0$, $m$-a.e. $x\in X$ would belong to at least one of the
	sets $Z_1$, and therefore would satisfy (\ref{e:eps}) for all $n>N(x,\epsilon)$.
	As $\epsilon>0$ was arbitrary, this proves that for $m$-a.e. $x\in X$ 
	\[
		\lim_{n\to\infty} \frac{1}{n}\cdot c(T_{j}^n,T_{{j}-1}^n\cdots T_2^n T_1^n.x)=\phi(\ab{T}_{j})\qquad ({j}=1,\dots,k)
	\]
	which in turn gives the convergence of the sum over $j=1,\dots,k$ to $\phi(\ab{T}_1)+\dots+\phi(\ab{T}_k)$.
	The $L^1$-convergence here follows by Lebesgue's Dominated convergence, because under the $L^\infty$-assumption 
	the terms are uniformly bounded.
	
	However, the latter conclusion of $L^1$-convergence does not require the assumption $c(\gamma,-)\in L^\infty(X,m)$,
	and holds under the weaker assumption $c(\gamma,-)\in L^1(X,m)$ for $\gamma\in\Gamma$.
	In the pointwise convergence argument, for every $x$ from a set $Z_1$ of large measure, for all $n$ large enough
	we compared the values of the cocycle along a polygonal path with that for a perturbed path (\ref{e:poly-pertpoly})
	and used Lemma~\ref{L:4points} to show that the values are close.
	In the $L^1$-context it is more natural to compare a polygonal path with the average of all perturbations:
	\[
		c(T_{j}^n,T_{{j}-1}^n\cdots T_1^n.x)- \frac{1}{(\delta n)^{{j}-1}}\cdot 
		\sum_{n_{{j}-1}=0}^{\lfloor \delta n\rfloor}\dots \sum_{n_1=0}^{\lfloor \delta n\rfloor} 
		c(T_{j}^n,T_{{j}-1}^{n-n_{{j}-1}}\cdots T_1^{n-n_1}.x)
	\]
	and replace Lemma~\ref{L:4points} by its $L^1$-version:
	\[
		\int_X |c(\tau,\,\alpha.x)-c(\tau',\,\beta.x)|\,dm(x)\le K_1\cdot (d(\alpha,\beta)+d(\tau\alpha,\tau'\beta))
	\] 
	where $K_1:=\max\{ \|c(s,-)\|_1 \mid s\in S\}$.
	We leave out the rather obvious details for this argument, as it it is not needed here. 
\end{proof}



\section{Proof of Theorems~\ref{T:NilKingman}, \ref{T:REM}} 
\label{sec:proof}

Throughout this section $\Gamma$, $\Gamma\acts (X,m)$, and $c:\Gamma\times X\to\bbR_+$ are as in Theorem~\ref{T:NilKingman},
and 
\[
	\phi:\ab{\Gamma}\otimes\bbR\cong\mfg_\infty\to \bbR_+,
	\qquad \Phi:G_\infty\to \bbR_+,
	\qquad d_\phi:G_\infty\times G_\infty\to \bbR_+
\] 
are as in Proposition~\ref{P:construction}.
We denote by $d_\infty$ the corresponding right-invariant, homogeneous metric on $G_\infty$ that 
appears in Pansu's Carnot-Carath\'eodory construction.
We denote 
\[
	\Ball(g,\epsilon):=\{g'\in G_\infty \mid d_\infty(g,g')<\epsilon\}
\]
the corresponding balls in $G_\infty$.
Consider the functions
\[
	\begin{split}
		&c^*(g,x):=\lim_{\epsilon\searrow 0}\ \limsup_{t\to\infty}\ \sup_{\scl{t}{\gamma}\in \Ball(g,\epsilon)}\frac{1}{t}c(\gamma,x),\\
		&c_*(g,x):=\lim_{\epsilon\searrow 0}\ \liminf_{t\to\infty}\ \inf_{\scl{t}{\gamma}\in \Ball(g,\epsilon)}\frac{1}{t}c(\gamma,x).
	\end{split}
\]
While this is not necessary for our argument, it is impossible to ignore the fact that $c^*(g,-)$ and $c_*(g,-)$ are a.e. constant.
\begin{lemma}\label{L:c-star}
	For each $g\in G_\infty$ the functions $c^*(g,-)$, $c_*(g,-)$ are $m$-a.e. constants, denoted $c^*(g)$, $c_*(g)$, respectively.
\end{lemma}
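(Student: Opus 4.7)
The plan is to prove $\Gamma$-invariance of both functions $c^{*}(g,\cdot)$ and $c_{*}(g,\cdot)$ and then invoke ergodicity of $\Gamma\acts (X,m)$. Measurability of these functions in $x$ is routine, since the $\epsilon\searrow 0$, $\limsup_{t\to\infty}$, $\sup_{\scl{t}{\gamma}\in\Ball(g,\epsilon)}$ defining them can be taken along countable cofinal sequences (and $\gamma$ already ranges in a countable set).

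For the $\Gamma$-invariance, fix $\gamma_{0}\in\Gamma$. The first step is a pointwise comparison between $c(\gamma,\gamma_{0}.x)$ and $c(\gamma\gamma_{0},x)$. Writing $\gamma=(\gamma\gamma_{0})\gamma_{0}^{-1}$ and applying sub-additivity at the point $\gamma_{0}.x$ gives $c(\gamma,\gamma_{0}.x)\le c(\gamma\gamma_{0},x)+c(\gamma_{0}^{-1},\gamma_{0}.x)$, while the direct sub-additivity $c(\gamma\gamma_{0},x)\le c(\gamma,\gamma_{0}.x)+c(\gamma_{0},x)$ yields the reverse inequality. Using the Lipschitz bound from assumption (i) of Theorem~\ref{T:NilKingman}, which forces $c(\gamma_{0}^{\pm 1},\cdot)\le K|\gamma_{0}|_{S}$ almost surely, I obtain
\[
|c(\gamma,\gamma_{0}.x)-c(\gamma\gamma_{0},x)|\le C_{0}\defq 2K|\gamma_{0}|_{S}
\]
for a.e.\ $x$ and \emph{every} $\gamma\in\Gamma$. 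After dividing by $t$ this discrepancy is $O(1/t)$.

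The second step is that right-multiplication by the fixed element $\gamma_{0}$ does not move the rescaled images: for $t\to\infty$ one has $d_{\infty}(\scl{t}{\gamma},\scl{t}{\gamma\gamma_{0}})\to 0$, and uniformly so once $\scl{t}{\gamma}$ is confined to any bounded region of $G_{\infty}$. Indeed, right-invariance of $d$ gives the exact identity $d(\gamma,\gamma\gamma_{0})=d(e,\gamma_{0})$, a fixed constant, so $\frac{1}{t}d(\gamma,\gamma\gamma_{0})\to 0$, and Pansu's Gromov--Hausdorff convergence (\ref{e:lengthscaling}), applied along arbitrary convergent subsequences and combined with Lemma~\ref{L:multiplication}, transports this uniform estimate to $(G_{\infty},d_{\infty})$. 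Consequently, for every $\epsilon>0$ and every $t\ge t_{0}(\epsilon,\gamma_{0})$,
\[
\{\gamma\in\Gamma:\scl{t}{\gamma}\in \Ball(g,\epsilon/2)\}\ \subset\ \{\gamma\in\Gamma:\scl{t}{\gamma\gamma_{0}}\in \Ball(g,\epsilon)\}.
\]
Combining with the first step, for such $t$,
\[
\sup_{\scl{t}{\gamma}\in \Ball(g,\epsilon/2)}\frac{c(\gamma,\gamma_{0}.x)}{t}\ \le\ \sup_{\scl{t}{\gamma'}\in \Ball(g,\epsilon)}\frac{c(\gamma',x)}{t}\ +\ \frac{C_{0}}{t}.
\]
Taking $\limsup_{t\to\infty}$ and then $\epsilon\searrow 0$ yields $c^{*}(g,\gamma_{0}.x)\le c^{*}(g,x)$, and the reverse inequality follows by swapping the roles of $x$ and $\gamma_{0}.x$ and replacing $\gamma_{0}$ by $\gamma_{0}^{-1}$. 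Hence $c^{*}(g,\cdot)$ is $\Gamma$-invariant, and ergodicity forces it to be a.e.\ constant. The argument for $c_{*}(g,\cdot)$ is identical, with $\inf$/$\liminf$ in place of $\sup$/$\limsup$.

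The only delicate point is the uniformity claim in the second step: the Gromov--Hausdorff convergence only gives distance comparisons along convergent sequences, so one needs a compactness argument (if the claim failed, a subsequence $\gamma^{(t_{i})}$ with $\scl{t_{i}}{\gamma^{(t_{i})}}$ in a fixed bounded set could be extracted along which $\scl{t_{i}}{\gamma^{(t_{i})}}\to h$ and $\scl{t_{i}}{\gamma^{(t_{i})}\gamma_{0}}\to h'$ with $h\ne h'$, contradicting $\frac{1}{t_{i}}d(\gamma^{(t_{i})},\gamma^{(t_{i})}\gamma_{0})=\frac{1}{t_{i}}d(e,\gamma_{0})\to 0$ via (\ref{e:lengthscaling})). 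With this in hand, the rest is a straightforward bookkeeping of inequalities.
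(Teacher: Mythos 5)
Your proposal is correct and follows essentially the same route as the paper: prove $\Gamma$-invariance of $c^*(g,\cdot)$ and $c_*(g,\cdot)$ via the subadditive inequality $c(\gamma\gamma_0,x)\le c(\gamma,\gamma_0.x)+c(\gamma_0,x)$ together with the fact that right-multiplication by $\gamma_0$ perturbs $\ssc{t}{\gamma}$ by $o(1)$, then invoke ergodicity. The only minor inefficiency is invoking assumption~(i) of Theorem~\ref{T:NilKingman} to bound $c(\gamma_0^{\pm 1},\cdot)$ uniformly; since $\gamma_0$ is fixed, the bound $\tfrac{1}{t}c(\gamma_0,x)\to 0$ holds for each $x$ automatically (as the paper does it), so the $L^\infty$ assumption plays no role here.
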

\begin{proof}
	For any fixed $g\in G_\infty$ the functions $c_*(g,-), c^*(g,-):X\to\bbR_+$ are measurable.
	Fix $\gamma_0\in \Gamma$. Then for any $\epsilon>0$ for all $t>t(g,\gamma_0,\epsilon)$ one has
	\[
		\ssc{t}{\gamma}\in \Ball(g,\epsilon)\qquad\Longrightarrow\qquad 
		\ssc{t}{\gamma\gamma_0},\ \ssc{t}{\gamma\gamma_0^{-1}}\in \Ball(g,2\epsilon).
	\] 
	Since for every $x\in X$
	\[
		\frac{1}{t}c(\gamma\gamma_0,x)\le \frac{1}{t}c(\gamma,\gamma_0.x)+\frac{1}{t}c(\gamma_0,x)
	\]
	it follows that $c^*(g,x)\le c^*(g,\gamma_0.x)$ and $c_*(g,x)\le c_*(g,\gamma_0.x)$.
	Applying the same argument to $\gamma_0^{-1}$ and $\gamma_0.x$ we observe that $c_*(g,-)$ and $c^*(g,-)$
	are measurable $\Gamma$-invariant functions. 
	Hence they are a.e. constants, because $\Gamma\acts (X,m)$ is ergodic.
\end{proof}

In the following subsections we shall proceed in the following steps:
\begin{enumerate}
	\item Show that $c^*(g)\le \Phi(g)$ for all $g\in G_\infty$.
	\item Show that $\Phi(g)\le c_*(g)$ for all $g\in G_\infty$.
	\item The obvious inequality $c_*\le c^*$ combined with the above implies that $t_i^{-1}\cdot c(\gamma_i,x)\to \Phi(g)$
	whenever $\scl{t_i}{\gamma_i}\to g$ in $G_\infty$.
	We shall show that for a.e. $x\in X$ the above convergence is uniform over $g\in \Ball(e,1)$ and will deduce
	Theorem~~\ref{T:NilKingman} by rescaling.
	\item We will prove Theorem~\ref{T:REM} by combining the ideas of the previous steps.
\end{enumerate}

Let $X_0\subset X$ be the set of $x\in X$ for which $c^*(g,x)=c^*(g)$, $c_*(g,x)=c_*(g)$, and
Theorem~\ref{T:erg-poly} holds for all $k\in\bbN$ and every choice of $T_1,\dots,T_k\in\Gamma$.
We imposed countably many condition where each holds $m$-a.e., therefore $m(X\setminus X_0)=0$.

\medskip

\subsection{The upper bound: $c^*(g)\le \Phi(g)$} 
\label{sub:upperbound}\hfill{}\\
Fix $x\in X_0$, and assume, towards contradiction, that 
there exists $\eta>0$ and sequences $t_i\to\infty$ and $\gamma_i\in \Gamma$ so that
\begin{equation}\label{e:overshoot}
	\lim_{i\to\infty} \ssc{t_i}{\gamma_i}=g,\qquad
	\textrm{while}
	\qquad 
	\frac{1}{t_i}c(\gamma_i,x)>d_\phi(e,g)+\eta.
\end{equation}
Fix a small $\epsilon>0$, namely $\epsilon=\eta/(K+3)$, where $K$ is as in Theorem~\ref{T:NilKingman}(i).
Choose a $\phi$-geodesic $\xi:[0,1]\to G_\infty$, i.e. a smooth admissible curve such that
\[
	\xi(0)=e,\qquad \xi(1)=g,\qquad \len_\phi(\xi)=\Phi(g)
\]
(we could choose any smooth curve from $e$ to $g$  with $\len_\phi(\xi)<\Phi(g)+\epsilon$ with a sufficiently small $\epsilon>0$).
Applying Proposition~\ref{P:curve-to-polygon} we find $k\in\bbN$, elements $T_1,\dots,T_k\in\Gamma$,
and a multiple $p\in\bbN$ that give $\epsilon$-approximation to the curve $\xi$.
Set
\[
	n_i:=\lfloor \frac{t_i}{p} \rfloor,\qquad S_i:=T_k^{n_i}\cdots T_2^{n_i} T_1^{n_i}.
\]
Note that 
\[
	\limsup_{i\to\infty}\frac{1}{t_i}\cdot d(S_i,\gamma_i)=\limsup_{i\to\infty}\ d_{\infty}(\ssc{t_i}{S_i},\ssc{t_i}{\gamma_i})
	=\limsup_{i\to\infty}\ d_{\infty}(\ssc{t_i}{S_i},g)<\epsilon.
\]
Since $c(-,x):\Gamma\to \bbR_+$ is $K$-Lipschitz, we have for all sufficiently large $i\gg 1$.
\[
	\begin{split}
		\frac{1}{n_ip}&\cdot \sum_{j=1}^k c(T_j^{n_i},\, T^{n_i}_{j-1}\cdots T_1^{n_i}.x)
		\ge\frac{1}{n_ip}\cdot c(S_i,x)>\frac{1}{t_i}\cdot c(S_i,x)-\epsilon\\
		&>\frac{1}{t_i}\cdot c(\gamma_i,x)-K\cdot\epsilon-\epsilon>\Phi(g)+(\eta-(K+1)\epsilon).
	\end{split}
\]
The above inequalities use sub-additivity, the fact that $n_ip/t_i\to 1$, the Lipschitz property of $c(-,x)$, and the
assumption (\ref{e:overshoot}) that we try to refute.
Applying Theorem~\ref{T:erg-poly} we have 
\[
	\lim_{i\to\infty}\ \frac{1}{n_ip}\cdot\sum_{{j}=1}^k c(T_{j}^{n_i},T^{n_i}_{{j}-1}\cdots T_1^{n_i}.x)
	=\frac{1}{p}\cdot\left(\phi(\ab{T}_k)+\dots+\phi(\ab{T}_1)\right).
\]
However, by part (ii) of Proposition~\ref{P:curve-to-polygon}, one also has
\[
	\frac{1}{p}\cdot\left(\phi(\ab{T}_k)+\dots+\phi(\ab{T}_1)\right)<\len_\phi(\xi)+\epsilon<\Phi(g)+\epsilon.
\] 
This leads to a contradiction, due to our choice of $\epsilon=\eta/(K+3)$.
Thus (\ref{e:overshoot}) is impossible.


\medskip

\subsection{The lower bound: $c_*(g)\ge \Phi(g)$} 
\label{sub:lowerbound}\hfill{}\\
Let us now prove the inequality $c_*(g)\ge \Phi(g)$. 
Fix $g\in G_\infty$, $x\in X_0$, and assume, towards contradiction, that 
there exists $\eta>0$ and sequences $t_i\to\infty$ and $\gamma_i\in \Gamma$ so that
\begin{equation}\label{e:undershoot}
	\lim_{i\to\infty} \ssc{t_i}{\gamma_i}=g\qquad
	\textrm{while}
	\qquad 
	\frac{1}{t_i}c(\gamma_i,x)<\Phi(g)-\eta.
\end{equation}
We take a small $\epsilon>0$ and an associated finite set $F\subset\Gamma$ as in condition (ii) of Theorem~\ref{T:NilKingman}.
Apply the following argument to each $\gamma_i$ from the sequence satisfying (\ref{e:undershoot}).

Each $\gamma_i$ can be written as a product 
\[
		\gamma_i=\delta_{i,s_i}\cdots \delta_{i,2}\delta_{i,1},
\]
where $\delta_{i,j}\in F$ for all $1\le j\le s_i$ and 
\[
	\sum_{j=1}^{s_i} c(\delta_{i,j},\,\delta_{i,j-1}\cdots \delta_{i,1}.x)<(1+\epsilon)\cdot c(\gamma_i,x).
\]
Consider the sequence of points
\[
	g_{i,j}:=\ssc{t_i}{\delta_{i,j}\cdots \delta_{i,1}}\qquad (j=1,\dots,s_i).
\]
Define a piecewise $d_\infty$-geodesic curve  
\[
	\xi_i:[0,1]\overto{} G_\infty 
\]
connecting $e$ to $\ssc{t_i}{\gamma_i}=g_{i,s_i}$ via the points $g_{i,j}$, which are to be visited at times
\[
	\xi_i(\frac{c_{i,1}+\dots+c_{i,j}}{c_{i,1}+\dots+c_{i,s_i}})=g_{i,j},
	\qquad\textrm{where}\qquad c_{i,j}:=c(\delta_{i,j},\,\delta_{i,r-1}\cdots \delta_{i,1}.x).
\]
Between these times $\xi_i(-)$ follows an appropriately rescaled $g_\infty$-geodesic.
So $\xi_i$ traces in $G_\infty$ the points associated to partial products representing a discrete path from $e$ to $\gamma_i$,
with time parameter chosen according to the $c_{i,j}$-steps. 

The bi-Lipschitz condition for $c(-,x)$ in terms of $d$ (condition~(i) in Theorem~\ref{T:NilKingman}), 
implies that $\xi_{i}:[0,1]\to G_\infty$ is a uniformly Lipschitz sequence of maps with $\xi_i(0)=e$.
Hence by Arzela-Ascoli, upon passing to a subsequence, we may assume that $\xi_i$ converge (uniformly) to a  
Lipschitz curve
\[
	\xi:[0,1]\ \overto{}\ G_\infty, \qquad \xi(0)=e,\qquad \xi(1)=g.
\] 
Since we are working towards a contradiction to (\ref{e:undershoot}) which holds for sub-sequences, we may assume
that $\xi_i\to \xi$ without complicating our notations any further.
 
With the Lipschitz curve $\xi$ at hand and small $\epsilon>0$, Proposition~\ref{P:curve-to-polygon} provides
$k\in\bbN$, $T_1,\dots, T_k\in\Gamma$ and $p\in\bbN$ that give $\epsilon$-good approximation for the curve $\xi$:
In particular, for large $i\gg 1$ and $n_i:=\lfloor t_i/p \rfloor$ one has
\[
	\sum_{j=1}^k\ d_\infty\left(\ssc{n_ip}{(T^{n_i}_j\cdots T^{n_i}_2T^{n_i}_1}),\,\xi(\frac{j}{k})\right)<\epsilon.
\]
For each $i\in\bbN$, choose $0=r_i(0)<r_i(1)<\dots<r_i(k)=s_i$ so that for $j=1,\dots,k$:
\[
	\frac{c_{i,r_i(j-1)+1}+\dots+c_{i,r_i(j)}}{c_{i,1}+\dots+c_{i,s_i}}\ \overto{}\ \frac{1}{k},
\]
and write $\ \gamma_i=\pi_{i,k}\cdots \pi_{i,2}\pi_{i,1}\ $ with $\ \pi_{i,j}:=\delta_{i,r_i(j)}\cdots \delta_{i,r_i(j-1)+1}$.
Then
\[
	\pi_{i,j}\cdots \pi_{i,2}\pi_{i,1}=\delta_{i,r_i(j)}\cdots \delta_{i,2}\delta_{i,1}\qquad (j=1,\dots,k).
\]
We have
\[
	\xi(\frac{j}{k})=\lim_{i\to\infty}\ \xi_i(\frac{j}{k})
	=\lim_{i\to\infty}\ \ssc{n_ip}{\pi_{i,j}\cdots \pi_{i,2}\pi_{i,1}}. 
\]
Thus for all large enough $i$:
\[
	\sum_{j=1}^k\ d_\infty\left(\ssc{n_ip}{T^{n_i}_j\cdots T^{n_i}_2T^{n_i}_1},\ssc{n_ip}{\pi_{i,j}\cdots \pi_{i,2}\pi_{i,1}}\right)<\epsilon
\]
and therefore for all large enough $i$:
\[
	\sum_{j=1}^k\ \frac{1}{n_ip}\cdot d\left(T^{n_i}_j\cdots T^{n_i}_2T^{n_i}_1,\pi_{i,j}\cdots \pi_{i,2}\pi_{i,1}\right)<\epsilon.
\]
We now apply Lemma~\ref{L:4points} with 
\[
	\alpha=T_{j-1}^{n_i}\cdots T^{n_i}_1,\qquad \tau=T_j^{n_i},\qquad
	\beta=\pi_{i,j-1}\cdots\pi_{i,1},\qquad\tau'=\pi_{i,j}
\]
to deduce that for all large enough $i$:
\[
	\sum_{j=1}^k\ \frac{1}{n_ip}\cdot\left|c(T^{n_i}_j,\,\,T_{j-1}^{n_i}\cdots T^{n_i}_1.x)  
	- c(\pi_{i,j},\,\pi_{i,j-1}\cdots\pi_{i,1}.x)\right|<2K\epsilon.
\]
For $i\gg1$ we have
\[
	\begin{split}
		\frac{1}{t_i}&\cdot c(\gamma_i,x)>\frac{n_ip}{(1+\epsilon) t_i}\cdot 
			\frac{1}{n_ip}\cdot \sum_{r=1}^{s_i} c(\delta_{i,r},\delta_{i,r-1}\cdots\delta_{i,1}.x)\\
		&\ge (1-\epsilon)\cdot \frac{1}{n_ip} \cdot
			\sum_{j=1}^k \sum_{r=r_i(j-1)+1}^{r_i(j)} c(\delta_{i,r},\delta_{i,r-1}\cdots\delta_{i,1}.x)\\
		&\ge (1-\epsilon)\cdot \frac{1}{n_ip} \cdot\sum_{j=1}^k c(\pi_{i,j},\,\pi_{i,j-1}\cdots\pi_{i,1}.x)\\
		&>(1-\epsilon)\cdot \left(\frac{1}{n_ip} \cdot
			\sum_{j=1}^k c(T_j^{n_i},\,T_{j-1}^{n_i}\cdots T^{n_i}_1.x)-2K\epsilon\right)
	\end{split}
\]
using sub-additivity of $c$ in the third inequality.
Theorem~\ref{T:erg-poly} gives
\[
	\lim_{i\to\infty}\ 
	\frac{1}{n_ip}\cdot \sum_{j=1}^k c(T_j^{n_i},T_{j-1}^{n_i}\cdots T^{n_i}_1.x)
	=\frac{1}{p}\left(\phi(\ab{T}_k)+\dots+\phi(\ab{T}_1)\right)
	>\len_\phi(\xi)-\epsilon.
\] 
Since $\xi$ is only one of many possible admissible curves connecting $\xi(0)=e$ to $\xi(1)=g$ 
(and most likely is sub-optimal in terms of the $\phi$-length), one has
\[
	\len_\phi(\xi)\ge d_\phi(e,g)=\Phi(g).
\]
Therefore we deduce
\[
	\liminf_{i\to\infty}\ \frac{1}{t_i}\cdot c(\gamma_i,x)\ge (1-\epsilon)\cdot \left(\Phi(g)-(2K+1)\epsilon\right).
\]
A choice of small enough $\epsilon>0$ contradicts (\ref{e:undershoot}).
This proves the claimed inequality 
\[
	\Phi(g)\le c_*(g).
\]

\medskip

\subsection{Proof of Theorem~\ref{T:NilKingman}} 
\label{sub:proof_of_A}\hfill{}\\
The results of the two previous subsections giving $c^*(g)\le \Phi(g)\le c_*(g)$, combined with the trivial 
inequality $c_*(g)\le c^*(g)$, show 
\[
	c_*(g)=c^*(g)=\Phi(g).
\]
Equivalently
\begin{equation}\label{e:convergence}
	\lim_{\epsilon\searrow 0}\ \limsup_{t\to\infty}\ 
	\sup\left\{|\frac{1}{t}\cdot c(\gamma,x)-\Phi(g)|\ :\ \ssc{t}{\gamma}\in \Ball(g,\epsilon)\right\}=0.
\end{equation}
We need to prove that for $m$-a.e. $x\in X$ (or rather every $x\in X_0$) one has
\[
	\forall \epsilon>0,\quad \exists R<\infty:\qquad
	|\gamma|_S\ge R\qquad\Longrightarrow\qquad
	\left|c(\gamma,x)-\Phi(\scl{1}{\gamma})\right|<\epsilon\cdot |\gamma|_S.
\]
Indeed, if the claim were not true, we could find $\epsilon_0>0$ and a sequence $\gamma_n\in\Gamma$
with $|\gamma_n|_S\to \infty$, so that 
\[
	\left|c(\gamma_n,x)-\Phi(\scl{1}{\gamma_n})\right|\ge \epsilon_0\cdot |\gamma_n|_S.
\]
The sequence 
\[
	g_n:=\ssc{|\gamma_n|_S}{\gamma_n}
\]
has
\[ 
	\limsup_{n\to\infty} d_\infty(g_n,e)\le 1.
\]
Hence $\{g_n \mid n\in\bbN\}$ is bounded.
Since balls in $G_\infty$ are precompact, there is a subsequence 
$\gamma_{n_i}$ converging to some $g\in G_\infty$ (in fact $g\in \Ball(e,1)$).
Denote $t_i:=|\gamma_{n_i}|_S$. We note that 
\[
	d_\infty(\ssc{t_i}{\gamma_{n_i}},\delta_{\frac{1}{t_i}}(\scl{1}{\gamma_{n_i}}))\to 0
\]
and therefore
\[
	\lim_{i\to\infty}
	\frac{1}{t_i}\cdot \Phi(\scl{1}{\gamma_{n_i}})=\lim_{i\to\infty}\Phi(\delta_{\frac{1}{t_i}}(\scl{1}{\gamma_{n_i}}))
	= \Phi(\lim_{i\to\infty}\ssc{t_i}{\gamma_{n_i}})=\Phi(g).
\]
Finally (\ref{e:convergence}) implies
\[
	\frac{1}{t_i}\left| c(\gamma_{n_i},x)-t_i\cdot \Phi(g)\right|\to 0
\]
contrary to the assumption. This proves Theorem~\ref{T:NilKingman}.


\medskip

\subsection{Proof of Theorem~\ref{T:REM}} 
\label{sub:proof_of_theorem_t:rem}\hfill{}\\
The main claim is that given any $g,g'\in G_\infty$ and sequences $t_i\to\infty$, $\gamma_i,\gamma'_i\in\Gamma$,
so that
\begin{equation}\label{e:ggprime}
	\lim_{i\to\infty}\ \ssc{t_i}{\gamma_i}\ \overto{}\ g,\qquad \lim_{i\to\infty}\ \ssc{t_i}{\gamma'_i}=g'
\end{equation}
one necessarily has for every $x\in X_0$:
\[
	\lim_{i\to\infty}\ \frac{1}{t_i}\cdot c(\gamma'_i\gamma_i^{-1},\,\gamma_i.x)\ =\ d_\phi(g,g').
\]
To show this we employ a variant on the upper bound argument \S\ref{sub:upperbound} and on
the lower bound argument \S\ref{sub:lowerbound}. 
In both of these arguments we use a fixed admissible curve $\xi_0$ connecting $e$ to $g$ in $G_\infty$,
concatenated with an appropriate curve $\xi$ connecting $g$ to $g'$ in $G_\infty$.

Denote by $\xi_1:[0,2]\to G_\infty$ the curve that connects $e$ to $g'$ via $g$:
\[
	\xi_1(0)=e,\qquad \xi_1(1)=g,\qquad \xi_1(2)=g';\qquad \xi_1(s+1)=\xi(s).
\]
Fix a  small $\epsilon>0$, and apply Proposition~\ref{P:curve-to-polygon} to $\xi_1$ to find
\[
	T_1,\dots,T_{2k}\in\Gamma,\qquad p\in\bbN,
\] 
so that 
\begin{equation}\label{e:lengthgg}
	\left|\frac{1}{p}\cdot\sum_{j=k+1}^{2k} \phi(\ab{T}_j)-\len_\phi(\xi)\right|<\epsilon,
\end{equation}
while for all $n\ge n_0$
\begin{equation}\label{e:followgg}
		\sum_{j=1}^{k}\ d_\infty\left(\ssc{np}{T_{k+j}^n\cdots T_2^n T_1^n},\,\xi(\frac{j}{k})\right)<\epsilon.
\end{equation}
Note that the last condition is a trivial consequence of the estimate on
\[
	\sum_{j=1}^{k}\ d_\infty\left(\ssc{np}{T_j^n\cdots T_2^n T_1^n},\,\xi_1(\frac{j}{k})\right)
	+\sum_{j=1}^{k}\ d_\infty\left(\ssc{np}{T_{k+j}^n\cdots T_2^n T_1^n},\,\xi_1(\frac{j}{k}+1)\right),
\]
while (\ref{e:lengthgg}) can be obtained from approximating the $\phi$-lengths of $\xi_1$ and $\xi_0$ by
\[
	\frac{1}{p}\cdot\sum_{j=1}^{2k} \phi(\ab{T}_j),\qquad\textrm{and}\qquad \frac{1}{p}\cdot\sum_{j=1}^{k} \phi(\ab{T}_j)
\]
and the obvious relation 
\[
	\len_\phi(\xi)=\len_\phi(\xi_1)-\len_\phi(\xi_0).
\]
Next, choosing $\xi$ to be a $\phi$-geodesic connecting $g$ to $g'$, and taking $n_i=\lfloor t_i/p\rfloor$, we get
\[
	\lim_{i\to\infty}\ d_\infty(\ssc{n_ip}{\gamma_i},g)=\lim_{i\to\infty}\ d_\infty(\ssc{n_ip}{\gamma'_i},g')=0
\] 
and
\[
	\limsup_{i\to\infty}\frac{1}{n_ip}\cdot d(\gamma_i,T^{n_i}_{k}\cdots T^{n_i}_1)\le \epsilon,
	\qquad
	\limsup_{i\to\infty}\frac{1}{n_ip}\cdot d(\gamma'_i,T^{n_i}_{2k}\cdots T^{n_i}_1)\le \epsilon.
\]
Following the same argument as 
in \S\ref{sub:upperbound} (using Lemma~\ref{L:4points} and Theorem~\ref{T:erg-poly}), we have
\[
	\begin{split}
		\limsup_{i\to\infty}\ &\frac{1}{t_i}\cdot c(\gamma'_i\gamma_i^{-1},\gamma_i.x)
			\le \limsup_{i\to\infty}\ \frac{1}{n_ip}\cdot c(T_{2k}^{n_i}\cdots T^{n_i}_{k+1},\,T_{k}^{n_i}\cdots T_1^{n_i}.x)+2K\epsilon\\
		&\le \lim_{i\to\infty}\ \frac{1}{n_ip}\cdot\sum_{j=1}^k 
			c(T_{k+j}^{n_i},\,T^{n_i}_{k+j-1}\cdots T_1^{n_i}.x)+2K\epsilon=\frac{1}{p}\cdot\sum_{j=1}^k \phi(\ab{T}_{k+j})+2K\epsilon\\
		&<\len_\phi(\xi)+(2K+1)\epsilon=d_\phi(g,g')+(2K+1)\cdot\epsilon.
	\end{split}
\]
Since $\epsilon>0$ was arbitrary, this shows the upper bound:
\[
	\limsup_{i\to\infty}\ \frac{1}{t_i}\cdot c(\gamma'_i\gamma_i^{-1},\gamma_i.x)\le d_\phi(g,g').
\]
The lower bound, 
\[
	\liminf_{i\to\infty}\ \frac{1}{t_i}\cdot c(\gamma'_i\gamma_i^{-1},\gamma_i.x)\ge d_\phi(g,g')
\]
is trivial if $g=g'$. Hence we assume $g\ne g'$ which implies that $|\gamma'_i\gamma_i^{-1}|_S\to\infty$.
We now use the innerness assumption (corresponding to condition (ii) in Theorem~\ref{T:NilKingman}).
Fix an arbitrary small $\epsilon>0$ and rewrite $\gamma'_i\gamma_i^{-1}$ as a product of 
\[
	\gamma'_i\gamma_i^{-1}=\delta_{i,s_i}\cdots \delta_{i,1},
	\qquad \textrm{while}\qquad 
	\sum_{r=1}^{s_i} c(\delta_{i,r},\,\delta_{i,r-1}\cdots\delta_{i,1}\gamma_i.x)
		<(1+\epsilon)\cdot c(\gamma'_i\gamma_i^{-1},\gamma_i.x).
\]
where $\delta_{i,j}$ belong to a fixed finite set $F\subset \Gamma$ (depending on $\epsilon$ and $x\in X_0$).
One then proceeds as in \S\ref{sub:lowerbound} to construct a uniformly Lipschitz sequence of piecewise geodesic curves
connecting $g$ to $\approx g'$, and to use Arzela-Ascoli to pass to a convergent subsequence
that produces a Lipschitz curve 
\[
	\xi:[0,1]\to G_\infty,\qquad \xi(0)=g,\qquad \xi(1)=g'.
\]
We are going to concatenate $\xi_0$ with $\xi$ to get $\xi_1:[0,2]\to G_\infty$ as before.
The long products $\gamma'_i\gamma_i^{-1}=\delta_{i,s_i}\cdots \delta_{i,1}$ can be sub-partitioned so that
\[
	\gamma'_i\gamma_i^{-1}=\pi_{i,k}\cdots \pi_{i,2}\pi_{i,1}
\]
while for $j=1,\dots,k$ one has
\[
	\xi(\frac{j}{k})=\xi_1(\frac{k+j}{k})=\lim_{i\to\infty}\ \ssc{t_i}(\pi_{i,j}\cdots \pi_{i,1}\gamma_i).
\]
We now invoke the $T_1,\dots,T_{2k}$ and $p\in\bbN$ satisfying (\ref{e:lengthgg}) and (\ref{e:followgg}).
One has
\[
	\limsup_{i\to\infty}\ \sum_{j=1}^k \frac{1}{n_ip}\cdot d(T_{k+j}^{n_i}\cdots T_1^{n_i}\gamma_i,\,\pi_{i,j}\cdots \pi_{i,1}\gamma_i)\le \epsilon
\]
The sub-additivity gives
\[
	\sum_{j=1}^{k} c(\pi_{i,j},\pi_{i,j-1}\cdots\pi_{i,2}\pi_{i,1}\gamma_i.x)
	\le \sum_{r=1}^{s_i} c(\delta_{i,r},\,\delta_{i,r-1}\cdots\delta_{i,1}\gamma_i.x).
\]
Combining these facts, one shows that for a subsequence of the given $t_i,\gamma_i,\gamma'_i$ one has:
\[
	\begin{split}
		\liminf_{i\to\infty}\ &\frac{1}{t_i}\cdot c(\gamma'_i\gamma_i^{-1},\,\gamma_i.x)
		\ge \liminf_{i\to\infty} \frac{1}{1+\epsilon} 
		\cdot\frac{1}{n_ip}\sum_{j=1}^{k} c(\pi _{i,j},\pi_{i,j-1}\cdots\pi_{i,2}\pi_{i,1},\,\gamma_i.x)\\
		&\ge (1-\epsilon) \cdot\left(\liminf_{i\to\infty}\ \frac{1}{n_ip}\cdot
			\sum_{j=1}^{k} c(T_{k+j}^{n_i},\, T^{n_i}_{k+j-1}\cdots T_1^{n_i}.x)-2K\epsilon\right)\\
		&= (1-\epsilon)\cdot\left(\frac{1}{p}\cdot\left(\phi(\ab{T}_{2k})+\dots+\phi(\ab{T}_{k+1})\right)-2K\epsilon\right)\\
		&\ge (1-\epsilon)\cdot (\len_\phi(\xi)-(2K+1)\epsilon)
		\ge (1-\epsilon)\cdot (d_\phi(g,g')-(2K+1)\epsilon).
	\end{split}
\]
Since $\epsilon>0$ is arbitrary, and any subsequence of $t_i,\gamma_i,\gamma_i'$ contains a sub-sub-sequence
satisfying the above, it follows 
\[
	\liminf_{i\to\infty}\ \frac{1}{t_i}\cdot c(\gamma'_i\gamma_i^{-1},\gamma_i.x)\ge d_\phi(g,g').
\]
In view of the $\limsup$ inequality, the lower bound is also proven.
As in the proof of Theorem~\ref{T:NilKingman} one can easily deduce that for $m$-a.e. $x\in X$ for every $\epsilon>0$
there is $R<\infty$ so that for $|\gamma|_S,|\gamma'|_S>R$ one has 
\[
	|c(\gamma'\gamma^{-1},\gamma.x)-d_\phi(\scl{1}{\gamma},\scl{1}{\gamma'})|<\epsilon\cdot \max(|\gamma|_S,|\gamma'|_S).
\]
This completes the proof of Theorem~\ref{T:REM}.



\begin{bibdiv}
\begin{biblist}

	\bib{Austin}{article}{
		author={Austin, Tim},
		title={Integrable measure equivalence for groups of polynomial growth},
		eprint={arXiv:1310.3216},
	}

	\bib{BT}{article}{
		author={Benjamini, Itai},
		author={Tessera, Romain},
		title={First passage percolation on nilpotent Cayley graphs and beyond},
		eprint={arXiv:1410.3292},
	}

	\bib{Bjorklund}{article}{
	   author={Bj{\"o}rklund, Michael},
	   title={The asymptotic shape theorem for generalized first passage
	   percolation},
	   journal={Ann. Probab.},
	   volume={38},
	   date={2010},
	   number={2},
	   pages={632--660},
	}

	\bib{Boivin}{article}{
	   author={Boivin, Daniel},
	   title={First passage percolation: the stationary case},
	   journal={Probab. Theory Related Fields},
	   volume={86},
	   date={1990},
	   number={4},
	   pages={491--499},
	}

	\bib{BDD}{article}{
	   author={Broise, M.},
	   author={D{\'e}niel, Y.},
	   author={Derriennic, Y.},
	   title={R\'earrangement, in\'egalit\'es maximales et th\'eor\`emes
	   ergodiques fractionnaires},
	   language={French, with English summary},
	   journal={Ann. Inst. Fourier (Grenoble)},
	   volume={39},
	   date={1989},
	   number={3},
	   pages={689--714},
	   issn={0373-0956},
	   review={\MR{1030845 (90m:28013)}},
	}
	
	\bib{Bre}{article}{
		author={Breuillard, Emmanuuel},
		title={Geometry of locally compact groups of polynomial growth and shape of large balls},
		journal={Groups Geom. Dyn.},
		volume={8},
		date={2014},
		number={3},
		pages={669--732},
	}
	
	\bib{Bu}{article}{
	   author={Burago, D. Yu.},
	   title={Periodic metrics},
	   conference={
	      title={Representation theory and dynamical systems},
	   },
	   book={
	      series={Adv. Soviet Math.},
	      volume={9},
	      publisher={Amer. Math. Soc., Providence, RI},
	   },
	   date={1992},
	   pages={205--210},
	   review={\MR{1166203 (93c:53029)}},
	}

	\bib{Corn}{article}{
		author={Cornulier, Yves},
		title={Asymptotic cones of Lie groups and cone equivalences},
		eprint={arXiv:0907.2546},
	}
	
	\bib{Gromov}{article}{
	   author={Gromov, Mikhael},
	   title={Groups of polynomial growth and expanding maps},
	   journal={Inst. Hautes \'Etudes Sci. Publ. Math.},
	   number={53},
	   date={1981},
	   pages={53--73},
	}

	\bib{KW}{article}{
	   author={Katznelson, Yitzhak},
	   author={Weiss, Benjamin},
	   title={A simple proof of some ergodic theorems},
	   journal={Israel J. Math.},
	   volume={42},
	   date={1982},
	   number={4},
	   pages={291--296},
	}

	\bib{Kingman}{article}{
	   author={Kingman, J. F. C.},
	   title={The ergodic theory of subadditive stochastic processes},
	   journal={J. Roy. Statist. Soc. Ser. B},
	   volume={30},
	   date={1968},
	   pages={499--510},
	}

	\bib{Pansu}{article}{
	   author={Pansu, Pierre},
	   title={Croissance des boules et des g\'eod\'esiques ferm\'ees dans les
	   nilvari\'et\'es},
	   language={French, with English summary},
	   journal={Ergodic Theory Dynam. Systems},
	   volume={3},
	   date={1983},
	   number={3},
	   pages={415--445},
	}

\end{biblist}
\end{bibdiv}

\end{document}